
\documentclass[a4paper]{amsart}

\usepackage{amsmath,amsthm,amsfonts,amssymb}
\allowdisplaybreaks[3]

\usepackage[headings]{fullpage}

\usepackage{graphicx}

\newtheorem{theorem}{Theorem}
\newtheorem{corollary}{Corollary} 
\newtheorem{proposition}{Proposition} 
\newtheorem{lemma}{Lemma}
\theoremstyle{definition}
\newtheorem{remark}{Remark}
\newtheorem{example}{Example}

\usepackage[colorlinks]{hyperref}



\newcommand{\eps}{\varepsilon}

\usepackage{xcolor}
\newcounter{margin}


\begin{document}

\title{A class of weighted Delannoy numbers}


\author{Jos\'e Mar\'ia Grau}
\address{Departamento de Matem\'aticas, Universidad de Oviedo, Oviedo, Spain}
\email{grau@uniovi.es}
\author{Antonio M. Oller-Marc\'en}
\address{Centro Universitario de la Defensa de Zaragoza - IUMA, Zaragoza, Spain}
\email{oller@unizar.es}
\author{Juan Luis Varona}
\address{Departamento de Matem\'aticas y Computaci\'on, Universidad de La Rioja, Logro\~no, Spain}
\email{jvarona@unirioja.es}
\thanks{The third author is supported by grant PID2021-124332NB-C22 from MICINN/FEDER.}

\thanks{\textbf{This paper has been published in:} \textit{Filomat} \textbf{36} (2022), no.~17, 5985--6007;
\url{https://doi.org/10.2298/FIL2217985G}}

\keywords{Delannoy numbers, weighted Delannoy numbers, generating functions, asymptotics}
\subjclass{11B75, 11B37, 05A15, 05A16}

\begin{abstract} 
The weighted Delannoy numbers are defined by the recurrence relation $f_{m,n}=\alpha f_{m-1,n}+ \beta f_{m,n-1}+ \gamma f_{m-1,n-1}$ if $m n>0 $, with $f_{m,n}=\alpha^m \beta^n$ if $n m=0$.
In this work, we study a generalization of these numbers considering the same recurrence relation but with $f_{m,n}=A^m B^n$ if $n m=0$. More particularly, we focus on the diagonal sequence $f_{n,n}$. 
With some ingenuity, we are able to make use of well-established methods by Pemantle and Wilson, and by Melczer in order to determine its asymptotic behavior in the case $A,B,\alpha,\beta,\gamma\geq 0$. In addition, we also study its P-recursivity with the help of symbolic computation tools. 
\end{abstract}

\maketitle

\section{Introduction}

The \emph{Delannoy number} $D_{m,n}$ is usually defined as the number of paths on $\mathbb Z^2$ going from $(0,0)$ to $(m,n)$ using only steps $(0,1)$, $(1,0)$ and $(1,1)$. Delannoy numbers are named after the French army officer and amateur mathematician Henri Delannoy, who first introduced them in the late 19th century~\cite{Delannoy}.

It is rather straightforward to see that Delannoy numbers are given by the recursion
\[
D_{m,n} =
\begin{cases}
1 , & \text{if $m n=0$,} \\
D_{m-1,n} + D_{m,n-1} + D_{m-1,n-1}, & \text{if $m n>0$.}
\end{cases}
\]
Moreover, the following closed-form formulas for them can also be easily obtained
\[ 
D_{m,n} = \sum_{i=0}^m \binom{n}{i} \binom{n+m-i}{n}
= \sum_{i=0}^m 2^i \binom{n}{i}\binom{m}{i}.
\]

The table below shows the first values for the Delannoy numbers \cite[OEIS A008288]{A008288}. The bold numbers in this table are the so-called \emph{central Delannoy numbers} $\mathfrak{D}_n := D_{n,n}$ \cite[OEIS A001850]{A001850}.
\[ 
\begin{array}{c|ccccccccc}
m,n & 0 & 1 & 2 & 3 & 4 & 5 & 6 & 7 & 8 \\
\hline
0 & \mathbf{1} & 1 & 1 & 1 & 1 & 1 & 1 & 1 & 1 \\
1 & 1 & \mathbf{3} & 5 & 7 & 9 & 11 & 13 & 15 & 17 \\
2 & 1 & 5 & \mathbf{13} & 25 & 41 & 61 & 85 & 113 & 145 \\
3 & 1 & 7 & 25 & \mathbf{63} & 129 & 231 & 377 & 575 & 833 \\
4 & 1 & 9 & 41 & 129 & \mathbf{321} & 681 & 1289 & 2241 & 3649 \\
5 & 1 & 11 & 61 & 231 & 681 &\mathbf{1683} & 3653 & 7183 & 13073 \\
6 & 1 & 13 & 85 & 377 & 1289 & 3653 & \mathbf{8989} & 19825 & 40081 \\
7 & 1 & 15 & 113 & 575 & 2241 & 7183 & 19825 & \mathbf{48639} & 108545 \\
8 & 1 & 17 & 145 & 833 & 3649 & 13073 & 40081 & 108545 & \mathbf{265729} \\
\end{array}
\]

Central Delannoy numbers have been extensively studied. They arise in several different situations: properties of lattices and posets, domino tilings of the Aztec diamond of order $n$ augmented by an additional row of length $2n$ in the middle \cite{SaHol1994}, alignments between DNA sequences \cite{BIO, BIO2}, etc. In \cite{Su2003} up to 29 different interpretations of these numbers are discussed.

The generating function of the central Delannoy numbers, $G(z)=\sum_{n \ge 0} \mathfrak{D}_n z^n$, is the algebraic function
\[ 
G(z) = \frac{1}{\sqrt{1-6z+z^2}}.
\]
This expression for $G(z)$ is obtained using classical techniques for the diagonal of rational generating functions by means of a resultant or a residue
computation. This closed-form then leads, via singularity analysis, to the following asymptotic value \cite{FlSe2009}:
\[
\mathfrak{D}_n 
= \frac{(3+ 2 \sqrt{2})^n}{\sqrt{\pi} \sqrt{3\sqrt{2}-4}}\left( \frac{1}{2\sqrt{n}}
+ \mathcal{O}(n^{-3/2})\right).
\]

Comtet \cite{Com1970} showed that the coefficients of any algebraic generating function satisfy
a linear recurrence. In the case of central Delannoy numbers we have the following:
\[
(n + 2) \mathfrak{D}_{n+2} - (6n + 9)\mathfrak{D}_{n+1} + (n + 1)\mathfrak{D}_{n}=0.
\]
On the other hand, closed-form expressions such as\footnote{Here $\binom{p}{q}=0$ for $q > p \ge 0$ and the double factorial of negative odd integers $-(2k+1)$ is defined by $(-2k-1)!! = (-1)^k/(2k-1)!! = (-2)^k k!/(2k)!$, $k=0,1,\dots$}
\[
\mathfrak{D}_n = \frac{(-1)^n}{6^n} \sum_{i=0}^n (-1)^i 6^{2i}\, \frac{(2i-1)!!}{(2i)!!}\binom{i}{n-i},
\]
and integral representations like
\[ 
\mathfrak{D}_n = \frac{1}{\pi} \int_{3-2\sqrt{2}}^{3+2\sqrt{2}}
\frac{t^{-n-1}\,dt}{\sqrt{(t-3+2\sqrt{2}) (3+2\sqrt{2}-t)}},
\]
are also known for central Delannoy numbers~\cite{QCSG2018}.  

Several generalizations of Delannoy numbers considering restrictions for the paths between $(0,0)$ and $(m,n)$ have already been studied. Among them, we can mention Schr\"oder numbers \cite{Sch1870}, Motzkin numbers \cite{DonSh1977}, Narayana numbers \cite{Nara1955}, etc. Other possible generalizations are related to the so-called Delannoy polynomials \cite{Wan, Dag20}.

In another, and also natural direction, we can mention the so-called \emph{weighted Delannoy numbers}, that are defined as follows. Given $\alpha, \beta, \gamma \in\mathbb{C}$, we consider paths starting at the origin that remain in the first
quadrant and use only the steps $(1,0)$, $(0,1)$ and $(1,1)$ with respective weights $\alpha$, $\beta$ and~$\gamma$. Then, we define the weight of a path as the product of the weights of the
individual steps that comprise it and, for $m,n \ge 0$, we denote by $W_{m,n}$ the sum of
all the weights of paths connecting the origin to the point $(m,n)$. The numbers $W_{m,n}$ are
precisely the weighted Delannoy numbers and they satisfy the recurrence relation\footnote{With the convention $0^0=1$, if required, when defining the initial conditions for $mn=0$.}
\begin{equation}
\label{eq:defsinAB}
W_{m,n}= \begin{cases}
\alpha^m \beta^n, & \text{if $m n=0$,} \\
\alpha W_{m-1,n}+ \beta W_{m,n-1}+ \gamma W_{m-1,n-1}, & \text{if $m n>0$.}
\end{cases}
\end{equation}

This generalization was considered for the first time in 1971 \cite{FrRo1971, HaKl1971} and it admits multifarious interpretations according to the nature of $\alpha$, $\beta$ and $\gamma$. For instance:
\begin{itemize}
\item If $\alpha$, $\beta$ and $\gamma$ are non-negative integers, $W_{m,n}$ can be interpreted as the number of different paths between $(0,0)$ and $(m,n)$ using $\alpha$ kinds of steps $(1,0)$, $\beta$ kinds of steps $(0,1)$ and $\gamma$ kinds of steps $(1,1)$. In Figure~\ref{fig:abgpaths} we provide an example where, in order to distinguish the different kinds of steps in the same direction, we have used continued, dashed and dotted lines.

\begin{figure}[h]
\begin{center}
\includegraphics[width=\textwidth]{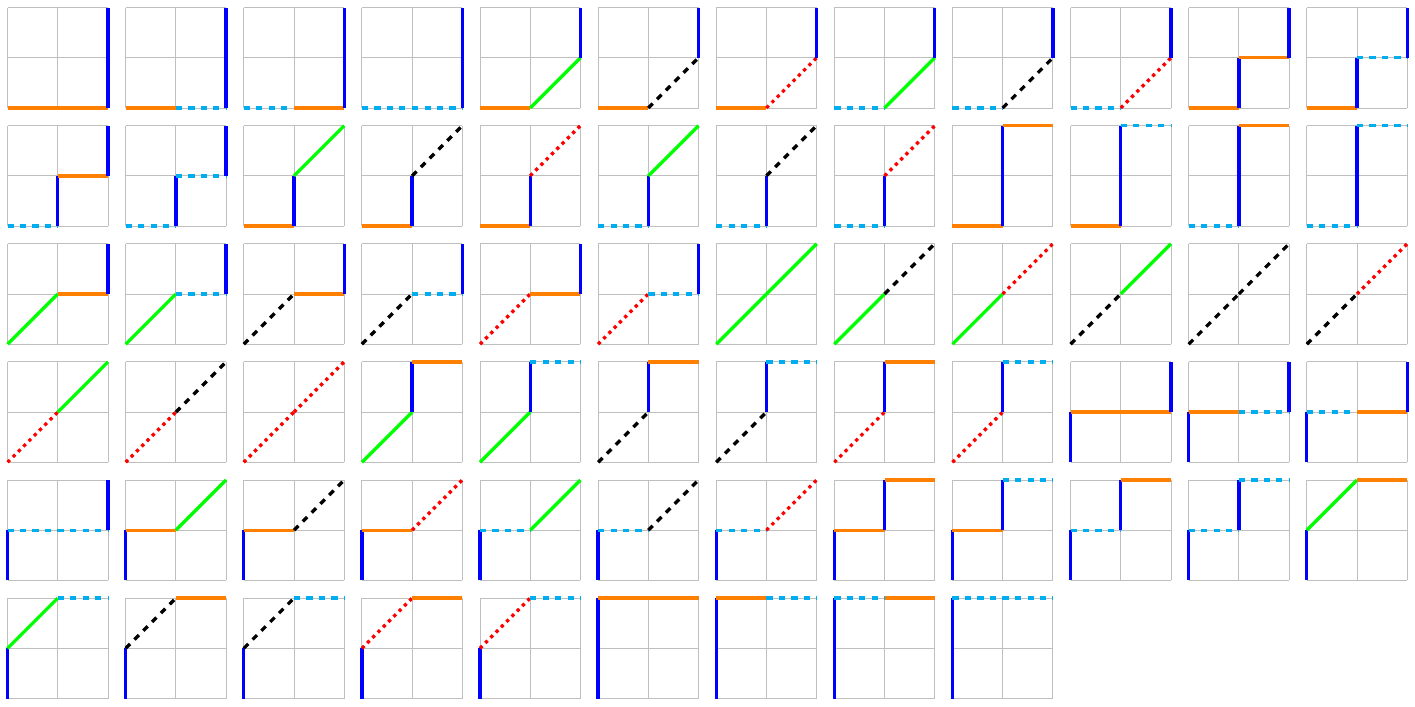}
\caption{For $\alpha=2$, $\beta=1$ and $\gamma=3$  in \eqref{eq:defsinAB}, there are $W_{2,2}=69$ Delannoy paths from $(0,0)$ to $(2,2)$}
\label{fig:abgpaths}
\end{center}
\end{figure}

\item With the same restriction of $\alpha,\beta,\gamma$ being non-negative integer numbers, the following interpretation is also possible. Let us consider that we have letters D of $\gamma$ different colors, letters R of $\alpha$ different colors and letters T of $\beta$ different colors. Then, $W_{m,n}$ represents the number of different words that can be formed in such a way that the number of R's plus that of D's is $m$ and the number of T's plus that of D's is~$n$. In Figure~\ref{fig:abgletras} we provide an example where, besides the color, we have also used different font shapes.

\begin{figure}[h]
\newcommand\tc[2]{\textcolor{#1}{\scalebox{2.2}{#2}}}
\begin{center}
\tc{green}{\textrm{D}}\tc{blue}{\textrm{T}}\quad 
\tc{blue}{\textrm{T}}\tc{green}{\textrm{D}}\quad 
\tc{black}{\textit{D}}\tc{blue}{\textrm{T}}\quad 
\tc{blue}{\textrm{T}}\tc{black}{\textit{D}}\quad 
\tc{red}{\textsf{D}}\tc{blue}{\textrm{T}}\quad 
\tc{blue}{\textrm{T}}\tc{red}{\textsf{D}}
\\[12pt]
\tc{orange}{\textrm{R}}\tc{blue}{\textrm{T}}\tc{blue}{\textrm{T}}\quad
\tc{blue}{\textrm{T}}\tc{orange}{\textrm{R}}\tc{blue}{\textrm{T}}\quad
\tc{blue}{\textrm{T}}\tc{blue}{\textrm{T}}\tc{orange}{\textrm{R}}\quad
\tc{cyan}{\textit{R}}\tc{blue}{\textrm{T}}\tc{blue}{\textrm{T}}\quad
\tc{blue}{\textrm{T}}\tc{cyan}{\textit{R}}\tc{blue}{\textrm{T}}\quad
\tc{blue}{\textrm{T}}\tc{blue}{\textrm{T}}\tc{cyan}{\textit{R}}\quad
\caption{For $\alpha=2$, $\beta=1$ and $\gamma=3$ in \eqref{eq:defsinAB}, we have $W_{1,2}=12$}
\label{fig:abgletras}
\end{center}
\end{figure}

\item If $\alpha, \beta , \gamma \in [0,1]$ and $\alpha+ \beta+ \gamma=1$ then $W_{m,n}$ represents the probability that a random path starting from $(0,0)$ passes through $(m,n)$ assuming that, at a given point $(i,j)$, there are probabilities $\alpha, \beta, \gamma$ of moving to the points $(i+1,j)$, $(i,j+1)$ and $(i+1,j+1)$, respectively. 

\item If $\alpha$, $\beta$ and $\gamma$ are non-negative real numbers, $W_{m,n}$ represents the expected number of paths (under the performance of a certain random variable) between $(0,0)$ and $(m, n)$ provided that $ \alpha, \beta, \gamma$ are the expected number of paths joining $(i, j)$ with $(i+1,j)$, $(i,j+1)$ and $(i+1,j+1)$, respectively.

\item If $\alpha$, $\beta$ and $\gamma$ are any real numbers, $W_{m,n}$ can be interpreted as the amount of matter (if positive) or antimatter (if negative) that will be in the point $(m,n)$ after the process described as follows: 
\begin{enumerate}
\item[(i)] We start with one unit of matter in position $(0,0)$.
\item[(ii)] The amount of matter or antimatter at position $(i,j)$ is multiplied by $\alpha$ and carried to $(i+1, j)$.
\item[(iii)] The amount of matter or antimatter at position $(i,j)$ is multiplied by $\beta$ and carried to $(i, j+1)$.
\item[(iv)] The amount of matter or antimatter at position $(i,j)$ is multiplied by $\gamma$ and carried to $(i+1, j+1)$.
\end{enumerate}
where, of course, any amount of matter is annihilated by any identical amount of antimatter. 
\end{itemize}

Several properties of the weighted Delannoy numbers defined in \eqref{eq:defsinAB} have been established in the literature~\cite{Dag21}. As an example, let us mention the following combinatorial expression~\cite{FrRo1971}:
\[
W_{m,n} = \sum_{k=0}^m \alpha^{m-k} \beta^{n-k} \binom{n}{k} \binom{m}{k} (\alpha\beta+\gamma)^k.
\]

The diagonal sequence $\mathfrak{W}_n:=W_{n,n}$ is of special interest. In \cite{HaKl1971} it is proved that it satisfies the recurrence relation 
\begin{equation} 
\label{eq:recdiagsinAB}
\mathfrak{W}_{n+1} = \frac{(2n +1)(\gamma+ 2 \alpha \beta)}{n+1} 
\mathfrak{W}_n-\frac{\gamma^2 n}{n+1} \mathfrak{W}_{n-1}, 
\quad
\mathfrak{W}_0=1, 
\quad
\mathfrak{W}_1=\gamma+ 2 \alpha \beta.
\end{equation}
Moreover, in \cite{No2012} the asymptotic behavior of $\mathfrak{W}_n$ is investigated,
showing that, for $0<1+\frac{\gamma}{\alpha \beta} \in \mathbb{R}$, one has that\footnote{Here and in what follows, we use the notation $a_n \sim b_n$ with the usual meaning of $a_n/b_n \to 1$ when $n \to \infty$.}
\[
\mathfrak{W}_n \sim \alpha^n \beta^n \,\frac{\left(1+\sqrt{1+\frac{\gamma}{\alpha \beta}}\right)^{2 n+1}}
{2 \,\sqrt[4]{1+\frac{\gamma}{\alpha \beta}}\,\sqrt{\pi n}}.
\]

In this work, we introduce a very natural extension of \eqref{eq:defsinAB} considering the same recurrence relation, but allowing more general initial conditions. Namely, we are interested in the sequence defined by 
\[
f_{m,n} =
\begin{cases}
A^m B^n , & \text{if $m n=0$,} \\
\alpha f_{m-1,n} + \beta f_{m,n-1} + \gamma f_{m-1,n-1}, & \text{if $m n>0$.}
\end{cases}
\]
Note that \eqref{eq:defsinAB} is just the particular case $A=\alpha$ and $B=\beta$.  

Using this generalization, all the interpretations above still hold, the only difference being that the models have different behavior when restricted to the coordinate axes. 
The weight of the steps $(1,0)$ on the horizontal axis is $A$ and the weight of the steps $(0,1)$ on the vertical axis is $B$; the weighting of the diagonal steps is maintained.
To illustrate his, let us compare the example in Figure~\ref{fig:abgpaths} ($A=\alpha=2$, $B=\beta=1$ and $\gamma=3$) with the one in Figure~\ref{fig:11abg}, that has $A=B=1$, $\alpha=2$, $\beta=1$ and $\gamma=3$.

\begin{figure}[h]
\begin{center}
\includegraphics[width=\textwidth]{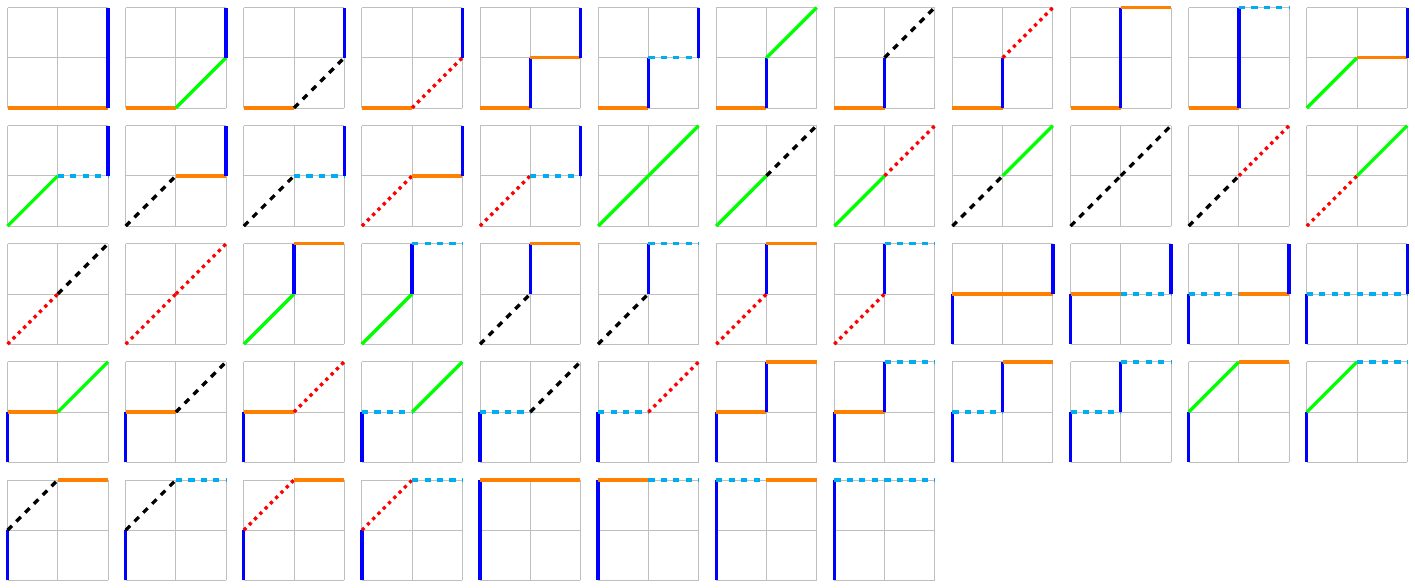}
\caption{For $A=B=1$, $\alpha=2$, $\beta=1$ and $\gamma=3$ in \eqref{eq:recu}, $f_{2,2}=56$}
\label{fig:11abg}
\end{center}
\end{figure}

The paper is organized as follows. In Sections~\ref{sec:GFfmn} and \ref{sec:GFfnn}, we deduce the generating functions of $f_{m,n}$ and of the diagonal sequence $f_{n,n}$.
In Section~\ref{sec:asympfnn}, we study the asymptotic behavior of $f_{n,n}$ and $f_{n+1,n+1}/f_{n,n}$. In Section~\ref{sec:recur}, we show the P-recursive nature of $f_{n,n}$. It is worth mentioning that, in our case, the recurrence relation for the diagonal $\mathfrak{f}_n=f_{n,n}$ can be explicitly stated, but it is much more complicated than in the traditional case \eqref{eq:recdiagsinAB}. Finally, in Section~\ref{sec:future}, we suggest some ideas for further research.

Some of our results are valid for every $A,B,\alpha, \beta, \gamma \in \mathbb{C}$ (the expression for the generating functions, for instance). However, most of the combinatorial interpretations and the results regarding asymptotic behavior require these constants to be, at least, non negative real numbers. In fact, some results are clearly false for negative constants, and a possible generalization for negative values would require a substantial work that does not seem interesting enough. For instance, while Theorem~\ref{th:cocientes} shows that the limit of $f_{n+1,n+1}/f_{n,n}$ always exists for $A,B,\alpha,\beta,\gamma \geq 0$, in Section~\ref{sec:future} we provide several examples in which this limit does not exist for negative values of the parameters.

One final remark. Throughout the paper, we often claim that some computations have been done with the aid of a computer algebra system. We have indistinctly used Maple, Mathematica, Maxima, and SageMath. However, in order to avoid the possible fails of any computer algebra system \cite{DeStWh, DuPeVa}, we have checked all the relevant computations with at least two of them.


\section{Generating function of $f_{m,n}$}
\label{sec:GFfmn}

For $A,B,\alpha, \beta, \gamma \in \mathbb{C}$, let us consider the bivariate sequence $\{f_{m,n}\}_{m,n\geq 0}$ recursively defined by 
\begin{equation}
\label{eq:recu}
f_{m,n} =
\begin{cases}
A^{m}B^{n}, & \text{if } mn=0, \\
\alpha f_{m-1,n} + \beta f_{m,n-1} + \gamma f_{m-1,n-1}, & \text{if } mn>0.
\end{cases}
\end{equation}

Recall that, by definition, the generating function of this sequence is just 
\begin{equation}
\label{eq:fungen}
f(x,y) = \sum_{m=0}^{\infty} \sum_{n=0}^{\infty} f_{m,n} x^m y^n.
\end{equation}
Then, we have the following.

\begin{theorem} 
\label{th:GFfmn}
For every $|x|<1/|A|$ and $|y|<1/|B|$, it holds that
\begin{equation}
\label{eq:funf}
f(x,y) = \frac{1 - \alpha x - \beta y + \alpha B xy + \beta A xy - AB xy}
{(1-Ax)(1-By)(1 - \alpha x - \beta y - \gamma xy)}.
\end{equation}
\end{theorem}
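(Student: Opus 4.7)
My plan is to derive the generating function by translating the recurrence \eqref{eq:recu} into a functional equation for $f(x,y)$ and then solving that equation algebraically. The technique is standard: split the double series in \eqref{eq:fungen} into a boundary part (corresponding to $mn=0$) and an interior part (corresponding to $mn>0$), and express the interior part through $f(x,y)$ itself using the recurrence.

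First I would compute the boundary contribution explicitly. The terms with $n=0$ contribute $\sum_{m\ge 0} A^m x^m = 1/(1-Ax)$, and those with $m=0$ contribute $1/(1-By)$, with the value $f_{0,0}=1$ counted twice, so the total boundary piece is $\frac{1}{1-Ax}+\frac{1}{1-By}-1$. These geometric series are exactly what force the hypothesis $|x|<1/|A|$, $|y|<1/|B|$. Next I would rewrite
\[
\sum_{m,n\ge 1}f_{m,n}x^m y^n = f(x,y)-\frac{1}{1-Ax}-\frac{1}{1-By}+1,
\]
and apply the recurrence on the left. Shifting indices gives
\[
\sum_{m,n\ge 1}f_{m-1,n}x^m y^n = x\!\left(f(x,y)-\tfrac{1}{1-Ax}\right),
\quad
\sum_{m,n\ge 1}f_{m,n-1}x^m y^n = y\!\left(f(x,y)-\tfrac{1}{1-By}\right),
\]
and $\sum_{m,n\ge 1} f_{m-1,n-1}x^m y^n = xy\,f(x,y)$, where the $n\ge 1$ and $m\ge 1$ corrections are needed in the first two sums (the third is clean because $mn=0$ forces the shifted index to be $-1$, which drops out).

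Combining everything yields a linear equation of the form
\[
f(x,y)\bigl(1-\alpha x-\beta y-\gamma xy\bigr) = \frac{1-\alpha x}{1-Ax}+\frac{1-\beta y}{1-By}-1,
\]
after noting the telescoping $\frac{1}{1-Ax}-\frac{\alpha x}{1-Ax}=\frac{1-\alpha x}{1-Ax}$ and similarly for $y$. Solving for $f(x,y)$ and placing the right-hand side over the common denominator $(1-Ax)(1-By)$ reduces the proof to a short algebraic simplification: expanding $(1-\alpha x)(1-By)+(1-\beta y)(1-Ax)-(1-Ax)(1-By)$ and checking that the result equals the numerator in \eqref{eq:funf}.

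I do not foresee any real obstacle; the main thing to be careful about is the index bookkeeping in the shifted sums, since the recurrence is valid only for $mn>0$ and one must subtract the correct boundary rows/columns before shifting. The convergence statement is a separate, easy point: as a formal power series identity the derivation is unconditional, and the geometric-series manipulations of the boundary contributions are legitimate exactly on the polydisc $|x|<1/|A|$, $|y|<1/|B|$.
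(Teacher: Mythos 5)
Your proposal is correct and follows essentially the same route as the paper: split off the boundary rows $m n=0$ as geometric series, use the recurrence with index shifts to obtain the functional equation $(1-\alpha x-\beta y-\gamma xy)f(x,y)=\frac{1-\alpha x}{1-Ax}+\frac{1-\beta y}{1-By}-1$, and simplify over the common denominator $(1-Ax)(1-By)$. The only differences are cosmetic (your form of the boundary contribution is an equivalent rewriting of the paper's $1+\frac{Ax}{1-Ax}+\frac{By}{1-By}$), and the final expansion indeed yields the numerator in \eqref{eq:funf}.
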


\begin{proof}
Let us identify the function \eqref{eq:fungen},
showing the domain of convergence.

We have
\begin{align*}
f(x,y) &=
1 + \sum_{m=1}^{\infty} A^m x^m + \sum_{n=1}^{\infty} B^n y^n
+ \sum_{m=1}^{\infty} \sum_{n=1}^{\infty} f_{m,n} x^m y^n \\
&= 1 + \frac{Ax}{1-Ax} + \frac{By}{1-By}
+ \alpha \sum_{m=1}^{\infty} \sum_{n=1}^{\infty} f_{m-1,n} x^m y^n \\
&\qquad\qquad
+ \beta \sum_{m=1}^{\infty} \sum_{n=1}^{\infty} f_{m,n-1} x^m y^n
+ \gamma \sum_{m=1}^{\infty} \sum_{n=1}^{\infty} f_{m-1,n-1} x^m y^n,
\end{align*}
where, to sum the geometric progressions, we have used $|Ax|<1$ and $|By|<1$.

Changing $m-1 \mapsto m$ in the first series, $n-1 \mapsto n$ in the second one,
and both in the third one,
\begin{align*}
f(x,y) &= 1 + \frac{Ax}{1-Ax} + \frac{By}{1-By}
+ \alpha x \sum_{m=0}^{\infty} \sum_{n=1}^{\infty} f_{m,n} x^m y^n 
+ \beta y \sum_{m=1}^{\infty} \sum_{n=0}^{\infty} f_{m,n} x^m y^n \\
&\qquad\qquad
+ \gamma xy \sum_{m=0}^{\infty} \sum_{n=0}^{\infty} f_{m,n} x^m y^n \\
&= 1 + \frac{Ax}{1-Ax} + \frac{By}{1-By}
+ \alpha x \bigg( \sum_{m=0}^{\infty} \sum_{n=0}^{\infty} f_{m,n} x^m y^n 
- \sum_{m=0}^{\infty} f_{m,0} x^m \bigg) \\
&\qquad\qquad + \beta y \bigg( \sum_{m=0}^{\infty} \sum_{n=0}^{\infty} f_{m,n} x^m y^n 
- \sum_{n=0}^{\infty} f_{0,n} y^n \bigg)
+ \gamma xy \sum_{m=0}^{\infty} \sum_{n=0}^{\infty} f_{m,n} x^m y^n \\
&= 1 + \frac{Ax}{1-Ax} + \frac{By}{1-By}
+ \alpha x \bigg( f(x,y) - \frac{1}{1-Ax} \bigg) 
+ \beta y \bigg( f(x,y) - \frac{1}{1-By} \bigg)
+ \gamma xy f(x,y) \\
&= 1 + \frac{(A-\alpha)x}{1-Ax} + \frac{(B-\beta)y}{1-By}
+ (\alpha x + \beta y + \gamma xy) f(x,y).
\end{align*}
Then,
\begin{align*}
&(1 - \alpha x - \beta y - \gamma xy) f(x,y)
= 1 + \frac{(A-\alpha)x}{1-Ax} + \frac{(B-\beta)y}{1-By} \\
&\qquad\qquad = \frac{(1-Ax)(1-By) + (A-\alpha)(1-By)x + (B-\beta)(1-Ax)y}{(1-Ax)(1-By)} \\
&\qquad\qquad = \frac{1 - \alpha x - \beta y + \alpha B xy + \beta A xy - AB xy}{(1-Ax)(1-By)}
\end{align*}
and \eqref{eq:funf} is proved.
\end{proof}


\section{Generating function of the diagonal sequence $f_{n,n}$}
\label{sec:GFfnn}

Now we consider the sequence $\{f_{n,n}\}$. By definition, its generating function is just $G(z) = \sum_{n\ge 0} f_{n,n} z^n$. In order to get an explicit expression for $G(z)$, we will use the method described in \cite{HaKl1971, RaWi2007}. It leads to the following.

\begin{theorem}
\label{th:GFfnn}
Let $S := S(z) = \sqrt{1 + \gamma^2 z^2 - 2 (2\alpha\beta+\gamma)z}$. Then, 
\begin{equation}
\label{eq:dossum}
 G(z) = \frac{-B+\beta}{\beta - B + \alpha B^2 z + \gamma B z}
  + \frac{2\alpha z \big(\alpha B + \beta A - AB + \gamma\big) \big({-1} + \gamma z + S\big)}
  {S \big({-1} + 2\alpha B z + \gamma z + S \big) \big(2\alpha + A ({-1} + \gamma z + S)\big)}.
\end{equation}
\end{theorem}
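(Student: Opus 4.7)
The plan is to apply the classical residue-based diagonal extraction formula for rational bivariate generating functions, as in the methods cited by the authors \cite{HaKl1971, RaWi2007}. Starting from the explicit expression in Theorem~\ref{th:GFfmn}, I would write
\[
G(z) = \frac{1}{2\pi i}\oint_{|w|=\rho} f(w,\,z/w)\,\frac{dw}{w},
\]
choosing $\rho$ small (and $|z|$ smaller still) so that the double series defining $f$ converges on the relevant torus. Evaluating $G(z)$ then reduces to summing residues of the integrand at the poles inside the contour.

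Multiplying numerator and denominator by appropriate powers of $w$ to clear the $1/w$ terms, the denominator becomes (up to sign) $(1-Aw)(w-Bz)\,Q(w,z)$, where $Q(w,z):=-\alpha w^2 + (1-\gamma z)\,w - \beta z$ has discriminant precisely $S(z)^2$, since $(1-\gamma z)^2 - 4\alpha\beta z = 1 - 2(2\alpha\beta+\gamma)z + \gamma^2 z^2$. Its roots are therefore
\[
w_{\pm} = \frac{(1-\gamma z)\mp S}{2\alpha}.
\]
For $|z|$ small, the poles of the integrand inside the contour are $w=Bz$ and the \emph{small} root $w=w_+$; the candidates $w=1/A$ and $w=w_-$ lie outside, and a quick check at $w=0$ (where numerator and denominator both produce a factor of $z$) shows that the origin is not a pole.

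I would then compute the two residues. At $w=Bz$, the factor $(1-ABz)$ cancels between numerator and denominator and the result matches exactly the first summand in \eqref{eq:dossum}. At $w=w_+$ the key algebraic move is to exploit the defining relation $-\alpha w_+^2 + (1-\gamma z)\,w_+ = \beta z$, which collapses the numerator to $z\,w_+\,(\gamma+\alpha B+\beta A - AB)$, while the factorization $Q(w,z) = -\alpha(w-w_+)(w-w_-)$ together with $w_+ - w_- = -S/\alpha$ contributes the factor $S$ in the denominator. To recognize the stated form I would then use $-2\alpha w_+ = -1+\gamma z + S$ to rewrite $w_+$, $1-Aw_+$, and $w_+-Bz$ as the linear combinations $-1+\gamma z+S$, $2\alpha + A(-1+\gamma z + S)$, and $-1+2\alpha Bz+\gamma z + S$ (up to factors of $2\alpha$) that appear in \eqref{eq:dossum}; summing the two residues yields the claimed expression.

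The main obstacle I expect is the algebraic bookkeeping at $w=w_+$: correctly identifying it as the small root trapped inside the contour, using the quadratic relation to simplify the residue numerator, and then converting back to the symmetric form in $S$. The residue at $w=Bz$ is relatively painless by comparison, so essentially all of the substantive work is packaged into that one computation.
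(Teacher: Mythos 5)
Your proposal is correct and follows essentially the same route as the paper: extract the diagonal via the contour integral of $f(w,z/w)/w$, identify the ``small'' poles $w=Bz$ and the small root of $-\alpha w^2+(1-\gamma z)w-\beta z$ (whose discriminant is $S^2$), and sum the two residues, which simplify exactly to the two terms of \eqref{eq:dossum}. Your residue computations (the cancellation of $1-ABz$ at $w=Bz$ and the use of the quadratic relation plus $w_+-w_-=-S/\alpha$ at the small root) check out, so the only difference from the paper is that you carry out explicitly the algebra the authors delegate to a computer algebra system.
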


\begin{proof}
Let us start with $f(x,y)$ defined as in \eqref{eq:funf}, which is is rational and holomorphic in a neighborhood of the origin. Then, for a fixed small enough $z$, the function $f(s,z/s)$ will be rational and holomorphic as a function of $s$ in some annulus around $s = 0$. Thus, in that annulus, $f(s,z/s)$ can be represented by a Laurent series whose constant term (coefficient of $s^0$) is $\sum_{m \ge 0} f_{m,m}z^m$, the series we want to compute.

By Cauchy's integral and residue theorems, we have that, for some circle $\Gamma_z$ about $s = 0$,
\begin{equation}
\label{eq:diagonal}
\sum_{m \ge 0} f_{m,m} z^{m}
= f(s,z/s)\big|_{s=0} 
= \frac{1}{2 \pi i} \int_{\Gamma_{z}} \frac{f(s,z/s)}{s} \,ds 
= \sum_{k} \operatorname{Res}\bigg(\frac{f(s,z/s)}{s}; s=s_{k}\bigg),
\end{equation}
where the $s_k$ are the ``small'' singularities of $f(s,z/s)/s$, i.e., the ones satisfying $\lim_{z\to 0} s_k(z) = 0$. Since $f$ is rational, these singularities are poles and algebraic functions of $z$, so that the residue sum, the diagonal generating function, is also an algebraic function of~$z$.

In our case, let us take
\begin{align*}
  f(s,z/s)/s &= \frac{1}{\alpha s^2 - s + \gamma sz + \beta z} 
  + \frac{s(A-\alpha)}{(As-1)(\alpha s^2 - s + \gamma sz + \beta z)} 
  - \frac{z(B-\beta)}{(s-Bz)(\alpha s^2 - s + \gamma sz + \beta z)}
  \\
  &= \frac{-\alpha s^2 + s - AB sz + \beta A sz + \alpha B sz - \beta z}
  {(As-1) (s-Bz) (\alpha s^2 - s + \gamma sz + \beta z)}
\end{align*}
as a function of the complex variable $s$. This function has four poles:
\[
  s_A = 1/A,
  \quad
  s_B = Bz,
  \quad
  s_{\pm} = \frac{1-\gamma z \pm \sqrt{(\gamma z-1)^2 - 4\alpha\beta z}}{2\alpha},
\]
whose corresponding residues are
\[
\operatorname{Res}\bigg(\frac{f(s,z/s)}{s}; s=s_A\bigg)
= \frac{A-\alpha}{\alpha - A + \beta A^2 z + \gamma A z},
\]
\[
\operatorname{Res}\bigg(\frac{f(s,z/s)}{s}; s=s_B\bigg)
= \frac{-B+\beta}{\beta - B + \alpha B^2 z + \gamma B z},
\]
\[
\operatorname{Res}\bigg(\frac{f(s,z/s)}{s}; s=s_{+}\bigg)
= \frac{2\alpha z \big(\alpha B + \beta A - AB + \gamma\big) \big(1-\gamma z + S\big)}
  {S \big(1 - 2\alpha B z - \gamma z + S \big) \big({-2\alpha} + A (1 - \gamma z + S)\big)},
\]
\[
\operatorname{Res}\bigg(\frac{f(s,z/s)}{s}; s=s_{-}\bigg)
= \frac{2\alpha z \big(\alpha B + \beta A - AB + \gamma\big) \big({-1} + \gamma z + S\big)}
  {S \big({-1} + 2\alpha B z + \gamma z + S \big) \big(2\alpha + A ({-1} + \gamma z + S)\big)},
\]
with $S = \sqrt{1 + \gamma^2 z^2 - 2 (2\alpha\beta+\gamma)z}$.

Finally, let us recall that, with the notation of \eqref{eq:diagonal}, we must use only the poles that satisfy $\lim_{z\to 0} s_k(z) = 0$. In our case (see above), only $s_B$ and $s_{-}$ satisfy such condition, and we get \eqref{eq:dossum} as claimed.
\end{proof}


\section{Asymptotic behavior}
\label{sec:asympfnn}

Let $A,B,\alpha, \beta, \gamma\geq 0$ and let us consider the recurrence relation~\eqref{eq:recu}:
\[
f_{m,n} =
\begin{cases}
A^m B^n , & \text{if $m n =0$,} \\
\alpha f_{m-1,n}+ \beta f_{m,n-1}+ \gamma f_{m-1,n-1}, & \text{if $mn>0$.}
\end{cases}
\]

First of all, observe that the case $\alpha=\beta=0$ is trivial because, by a simple induction argument, the
following holds for every $m,n\geq 0$:
\[
f_{m,n}=\gamma^{\min\{m,n\}}A^{m-\min\{m,n\}}B^{n-\min\{m,n\}}.
\]
On the other hand, if we assume that $\alpha\beta\neq 0$ and we define $\widehat{f}_{m,n}= \alpha^{-m} \beta^{-n} f_{m,n}$, $\widehat{A}=\frac{A}{\alpha}$, $\widehat{B}=\frac{B}{\beta}$, and $\widehat{\gamma}=\frac{\gamma}{\alpha \beta}$, it is easy to check that 
\[
\widehat{f}_{m,n} =
\begin{cases}
\widehat{A}^m \widehat{B}^n , & \text{if $m n=0$,} \\
\widehat{f}_{m-1,n} + \widehat{f}_{m,n-1}+ \widehat{\gamma} \widehat{f}_{m-1,n-1}, & \text{if $mn>0$.}
\end{cases}
\]
If only $\alpha=0$, it is enough to define $\widehat{f}_{m,n}:= \beta^{-n} f_{m,n}$, $\widehat{A}=A$, $\widehat{B}=\frac{B}{\beta}$ and $\widehat{\gamma}=\frac{\gamma}{\beta}$ to reach a similar situation.
\[
\widehat{f}_{m,n} =
\begin{cases}
\widehat{A}^m \widehat{B}^n , & \text{if $m n=0$,} \\
\widehat{f}_{m,n-1}+ \widehat{\gamma} \widehat{f}_{m-1,n-1}, & \text{if $mn>0$.}
\end{cases}
\]
Finally, if only $\beta=0$, the same idea applies.

All the previous discussion shows that, without loss of generality, we can assume that $\alpha,\beta\in\{0,1\}$. In this section we will focus on the ``complete'' case $\alpha=\beta=1$. All the ideas and techniques can be easily applied if $\alpha\beta=0$. Thus, in what follows, we will just assume that $\gamma\geq 0$ and consider the sequence $\{f_{m,n}\}_{m,n\geq 0}$ defined by
\begin{equation}
\label{eq:recuab1}
f_{m,n} =
\begin{cases}
A^m B^n , & \text{if $m n = 0$,} \\
f_{m-1,n} + f_{m,n-1}+ \gamma f_{m-1,n-1}, & \text{if $mn>0$.}
\end{cases}
\end{equation}

We begin with an easy result whose proof by induction is straightforward. This proposition characterizes, in particular, the cases for which the diagonal sequence $f_{m,m}$ is a geometric sequence, and it will play an important role later on.

\begin{proposition} 
\label{prop:AmBn}
Let $\{f_{m,n}\}$ be the sequence defined in \eqref{eq:recuab1}. Then, $f_{m,n} = A^m B^n$ for every $m,n\geq 0$ if and only if $AB=A+B+\gamma$.
\end{proposition}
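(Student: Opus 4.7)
The plan is to prove this by a direct double-implication argument, both directions of which are elementary.

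For the forward direction, I would simply specialize to $(m,n)=(1,1)$. Assuming $f_{m,n}=A^m B^n$ for all $m,n\ge 0$, the recurrence \eqref{eq:recuab1} at this point gives
\[
AB = f_{1,1} = f_{0,1} + f_{1,0} + \gamma f_{0,0} = B + A + \gamma,
\]
which is exactly the required condition.

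For the converse, I would argue by induction on $m+n$, assuming $AB = A + B + \gamma$. The base cases $m+n\le 1$, and more generally all $(m,n)$ with $mn=0$, are immediate from the initial conditions in \eqref{eq:recuab1}. For the inductive step, suppose $f_{i,j}=A^i B^j$ holds for every pair with $i+j < m+n$, and fix $(m,n)$ with $m,n\ge 1$. Then by the recurrence and the inductive hypothesis,
\[
f_{m,n} = A^{m-1}B^n + A^m B^{n-1} + \gamma A^{m-1}B^{n-1}
       = A^{m-1}B^{n-1}(B + A + \gamma) = A^{m-1}B^{n-1}\cdot AB = A^m B^n,
\]
using the hypothesis $A+B+\gamma = AB$ at the crucial step. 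This completes the induction.

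There is essentially no obstacle: both directions are routine. The only mild subtlety to mention is that the argument trivially covers the case $AB=0$ as well (since then $A+B+\gamma=0$ forces $A=B=\gamma=0$ in the nonnegative-parameter setting, and both sides collapse consistently), so no separate case analysis is needed.
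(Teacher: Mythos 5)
Your proof is correct and is exactly the argument the paper has in mind: the paper omits the details, calling the result ``an easy result whose proof by induction is straightforward,'' and your induction on $m+n$ for the converse together with the specialization at $(m,n)=(1,1)$ for the forward direction is precisely that routine argument. The closing remark about $AB=0$ is harmless but unnecessary, since the factorization $A^{m-1}B^{n}+A^{m}B^{n-1}+\gamma A^{m-1}B^{n-1}=A^{m-1}B^{n-1}(A+B+\gamma)$ is valid for all $m,n\ge 1$ without any case distinction.
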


Before we proceed, as an example, let us illustrate the discussion above by stating Proposition~\ref{prop:AmBn} in full generality.

\begin{corollary} 
Let $A,B,\alpha, \beta, \gamma\geq 0$ and $\{f_{m,n}\}$ be the sequence defined in \eqref{eq:recu}. The following hold: 
\begin{itemize}
\item[(i)] If $\alpha\beta\neq 0$, then $f_{m,n} = A^m B^n$ for every $m,n\geq 0$ if and only if $AB=\beta A+\alpha B+\gamma$.
\item[(ii)] If $\alpha=0\neq\beta$, then $f_{m,n} = A^m B^n$ for every $m,n\geq 0$ if and only if $AB=\alpha B+\gamma$.
\item[(iii)] If $\alpha\neq 0=\beta$, then $f_{m,n} = A^m B^n$ for every $m,n\geq 0$ if and only if $AB=\beta A+\gamma$.
\item[(iv)] If $\alpha=\beta= 0$, then $f_{m,n} = A^m B^n$ for every $m,n\geq 0$ if and only if $AB=\gamma$.
\end{itemize}
\end{corollary}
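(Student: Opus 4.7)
My plan is to treat the corollary as a direct generalization of Proposition \ref{prop:AmBn} via a strong induction on $m+n$ that handles all four cases in parallel, since the initial values on the axes in \eqref{eq:recu} are insensitive to the values of $\alpha,\beta,\gamma$.

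For necessity, I would evaluate the recurrence \eqref{eq:recu} at $(m,n)=(1,1)$. Using the initial values $f_{1,0}=A$, $f_{0,1}=B$, $f_{0,0}=1$, this gives $f_{1,1}=\alpha B+\beta A+\gamma$. If $f_{m,n}=A^mB^n$ for every $m,n\geq 0$, then also $f_{1,1}=AB$, so $AB=\alpha B+\beta A+\gamma$; under the appropriate zero pattern for $(\alpha,\beta)$ this specializes to the condition claimed in each case. For sufficiency, I would induct on $m+n$. The base case $mn=0$ is immediate from \eqref{eq:recu}. For $mn>0$, the recurrence and the induction hypothesis yield
\begin{align*}
f_{m,n} &= \alpha A^{m-1}B^n+\beta A^mB^{n-1}+\gamma A^{m-1}B^{n-1} \\
&= A^{m-1}B^{n-1}\bigl(\alpha B+\beta A+\gamma\bigr),
\end{align*}
and the hypothesized identity collapses the bracket to $AB$, producing $f_{m,n}=A^mB^n$.

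An alternative, following the reductions recorded at the start of Section \ref{sec:asympfnn}, is to apply in case (i) the rescaling $\widehat{f}_{m,n}=\alpha^{-m}\beta^{-n}f_{m,n}$ to invoke Proposition \ref{prop:AmBn} directly with $\widehat{A}=A/\alpha$, $\widehat{B}=B/\beta$, $\widehat{\gamma}=\gamma/(\alpha\beta)$, and then clear denominators in $\widehat{A}\widehat{B}=\widehat{A}+\widehat{B}+\widehat{\gamma}$ to recover $AB=\beta A+\alpha B+\gamma$. Cases (ii) and (iii) reduce to truncated recurrences that lie outside the literal hypothesis of Proposition \ref{prop:AmBn}, so one must revert to the induction above; case (iv) is immediate from the explicit form $f_{m,n}=\gamma^{\min\{m,n\}}A^{m-\min\{m,n\}}B^{n-\min\{m,n\}}$ already displayed at the start of the section. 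I expect no genuine obstacle; the only point demanding attention is matching each stated equation to the correct specialization of $AB=\alpha B+\beta A+\gamma$ under the zero pattern of $(\alpha,\beta)$ in question.
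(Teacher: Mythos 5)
Your argument is correct and is essentially the route the paper intends: the paper states this corollary as an immediate consequence of the rescaling reductions at the start of Section~\ref{sec:asympfnn} together with Proposition~\ref{prop:AmBn}, whose proof is itself the same ``straightforward induction'' you carry out, so your unified induction on $m+n$ (necessity from $f_{1,1}=\alpha B+\beta A+\gamma$, sufficiency from $f_{m,n}=A^{m-1}B^{n-1}(\alpha B+\beta A+\gamma)$) proves exactly the right master statement, valid for any zero pattern of $(\alpha,\beta)$ and with no division issues. One caveat on the step you defer (``matching each stated equation''): your computation specializes, for $\alpha=0\neq\beta$, to $AB=\beta A+\gamma$, and for $\beta=0\neq\alpha$ to $AB=\alpha B+\gamma$, which are the formulas printed in items (iii) and (ii) respectively; in other words the surviving terms in (ii) and (iii) of the statement appear to be interchanged (with $\alpha=0$ the printed condition $AB=\alpha B+\gamma$ reduces to $AB=\gamma$ and fails, e.g., for $A=B=\beta=1$, $\gamma=0$, where $f_{m,n}\equiv 1=A^mB^n$). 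So your proof establishes the corrected conditions rather than the literal ones, and it would be worth stating that specialization explicitly instead of asserting that it matches the claim as printed.
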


This section is devoted to analyze the asymptotic behavior of the diagonal sequence $\{f_{m,m}\}$. To do so, we will need to consider several different cases separatedly. Throughout the section we will make extensive use of a well-established method due to Pemantle and Wilson that we will refer to as the PW method. The details can be found in \cite{Mel2017, PeWi2004, PeWi2008, PeWi2013, RaWi2008}, for example.

\begin{lemma} 
\label{lem:generatriz}
Let $f(x,y) = \sum_{m,n\geq0} f_{m,n} x^m y^n$ be the generating function of the sequence $\{f_{m,n}\}$ defined in \eqref{eq:recuab1}. Then, if $|x|<1/|A|$ and $|y|<1/|B|$, it holds that
\[
f(x,y) = \frac{1-x-y+ (A+B-AB) x y}{(1-Ax)(1-By)(1-x-y-\gamma x y)} .
\]
\end{lemma}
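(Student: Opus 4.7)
The plan is to recognize that Lemma~\ref{lem:generatriz} is the specialization of Theorem~\ref{th:GFfmn} to the case $\alpha=\beta=1$. Indeed, the recurrence~\eqref{eq:recuab1} is exactly~\eqref{eq:recu} with those two parameters set to one, and the reduction argument that opens Section~\ref{sec:asympfnn} justifies treating this specialization as the ``complete'' case under study. So no new analytic work is needed; the statement is immediate once the substitution is carried out.

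Concretely, I would invoke the identity
\[
f(x,y) = \frac{1 - \alpha x - \beta y + \alpha B xy + \beta A xy - AB xy}{(1-Ax)(1-By)(1 - \alpha x - \beta y - \gamma xy)},
\]
valid on $|x|<1/|A|$, $|y|<1/|B|$ by Theorem~\ref{th:GFfmn}, and plug in $\alpha=\beta=1$. The numerator collapses to
\[
1-x-y+Bxy+Axy-ABxy = 1-x-y+(A+B-AB)xy,
\]
while the denominator becomes $(1-Ax)(1-By)(1-x-y-\gamma xy)$, which is precisely the expression claimed. The domain of convergence is inherited unchanged from Theorem~\ref{th:GFfmn}.

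There is no genuine obstacle here, since the delicate bookkeeping---splitting off the $m=0$ row and $n=0$ column, applying the recurrence, summing the two geometric series arising from the axis initial conditions, and solving the resulting functional equation for $f(x,y)$---has already been handled in the proof of Theorem~\ref{th:GFfmn}. If a self-contained argument were preferred for readability, one could instead replay that derivation from scratch with $\alpha=\beta=1$ substituted throughout; but since the more general identity is already in hand, a one-line appeal followed by the substitution is both shorter and less prone to clerical error.
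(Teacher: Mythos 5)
Your proposal is correct and matches the paper exactly: the paper's own proof of this lemma is simply ``Just apply Theorem~\ref{th:GFfmn} with $\alpha=\beta=1$,'' and your substitution check of the numerator and denominator is the same one-line argument, with the domain of convergence inherited as you say.
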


\begin{proof} 
Just apply Theorem~\ref{th:GFfmn} with $\alpha=\beta=1$.
\end{proof}

\begin{proposition} 
\label{prop:principal} 
If $A, B < 1 + \sqrt{1+\gamma}$ then there is a constant $K$ such that
\[
f_{m,m} \sim \frac{K}{\sqrt{m}}\left( 1 + \sqrt{1+\gamma}\right)^{2 m}
= \frac{K}{\sqrt{m}} \left(2+\gamma+2\sqrt{1+\gamma}\right)^m.
\]
\end{proposition}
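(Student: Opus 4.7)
The plan is to apply the Pemantle--Wilson (PW) method to the bivariate rational generating function $f(x,y)$ given by Lemma~\ref{lem:generatriz}, in order to extract the asymptotics of the diagonal $f_{m,m}=[x^m y^m]f(x,y)$. Writing $f=G/H$ with
\[
H(x,y)=(1-Ax)(1-By)\,Q(x,y),\qquad Q(x,y)=1-x-y-\gamma xy,
\]
the singular variety $\mathcal V=\{H=0\}$ splits into three smooth components: the two lines $\{x=1/A\}$, $\{y=1/B\}$, and the hyperbola $\{Q=0\}$.

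First I would locate the candidate critical point on $\{Q=0\}$. At smooth points of $\mathcal V$, the critical-point equation for the main diagonal direction is $xH_x=yH_y$; restricted to $\{Q=0\}$ this simplifies, since $Q_x=-1-\gamma y$ and $Q_y=-1-\gamma x$, to $x(1+\gamma y)=y(1+\gamma x)$, i.e.\ $x=y$. Coupling this with $Q=0$ yields $\gamma x^2+2x-1=0$, whose positive root is
\[
x_0=y_0=\frac{-1+\sqrt{1+\gamma}}{\gamma}=\frac{1}{1+\sqrt{1+\gamma}},
\]
so $(x_0y_0)^{-1}=\bigl(1+\sqrt{1+\gamma}\bigr)^{2}=2+\gamma+2\sqrt{1+\gamma}$, matching precisely the exponential growth rate claimed in the statement.

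Second, I would use the hypothesis $A,B<1+\sqrt{1+\gamma}$ to verify that $(x_0,y_0)$ is the strictly minimal critical point of $f$. Since the hypothesis is equivalent to $1/A>x_0$ and $1/B>y_0$, the two linear components of $\mathcal V$ lie strictly outside the closed polydisc $\{|x|\le x_0\}\times\{|y|\le y_0\}$. Therefore $(x_0,y_0)$ is a smooth point of $\mathcal V$ (cut out only by $Q$), and it is the unique singularity of $f$ on the distinguished torus $\{|x|=x_0\}\times\{|y|=y_0\}$, because $f$ has non-negative coefficients and $(x_0,y_0)$ is strictly positive. These are exactly the combinatorial-case hypotheses of PW (see \cite{Mel2017,PeWi2008}).

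Third, I would invoke the standard smooth-point PW asymptotic formula, which in this setting gives
\[
f_{m,m}\sim\frac{-G(x_0,y_0)}{y_0\,Q_y(x_0,y_0)\,\sqrt{2\pi m\,\kappa}}\;(x_0y_0)^{-m},
\]
where $\kappa>0$ is the positive curvature constant built from the first and second derivatives of $\log Q$ at $(x_0,y_0)$ along the diagonal direction. A short computation confirms that $G(x_0,y_0)\neq 0$ (using the explicit numerator $1-x-y+(A+B-AB)xy$), that $y_0Q_y(x_0,y_0)=-y_0(1+\gamma x_0)\neq 0$, and that $\kappa>0$. Absorbing all these non-zero evaluations into a single positive constant $K$ yields the claimed asymptotic
\[
f_{m,m}\sim \frac{K}{\sqrt{m}}\bigl(1+\sqrt{1+\gamma}\bigr)^{2m}.
\]

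I expect the main obstacle to be the careful verification of the PW hypotheses, in particular that $(x_0,y_0)$ is the \emph{strictly} minimal critical point and that no far singularity of $\mathcal V$ on the torus $\{|x|=x_0,\,|y|=y_0\}$ contributes. The hypothesis $A,B<1+\sqrt{1+\gamma}$ is precisely what rules out the competing contributions from the linear factors $(1-Ax)$ and $(1-By)$; once that is established, the rest of the argument is a direct application of the PW machinery.
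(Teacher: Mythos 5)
Your proposal is correct and follows essentially the same route as the paper: write $f$ with denominator factor $Q(x,y)=1-x-y-\gamma xy$, locate the diagonal critical point $\bigl(\tfrac{1}{1+\sqrt{1+\gamma}},\tfrac{1}{1+\sqrt{1+\gamma}}\bigr)$ on the smooth component $\{Q=0\}$, use $A,B<1+\sqrt{1+\gamma}$ to rule out the factors $(1-Ax)$, $(1-By)$ and obtain strict minimality, and then invoke the Pemantle--Wilson smooth-point asymptotics. In fact you spell out some verifications (the equivalence with $1/A>x_0$, $1/B>y_0$, and the nonvanishing checks) that the paper only asserts.
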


\begin{proof} 
According to Lemma~\ref{lem:generatriz}, the generating function of $\{f_{m,n}\}$ can be written as
\[
f(x,y) = \frac{I(x,y)}{1-x-y-\gamma x y},
\]
where $I(x,y)$ is analytic in a neighborhood of the origin.

It is easy to see that the system of algebraic equations
\[
I(x,y) = 0, \qquad x I_x(x,y)-y I_y(x,y) = 0
\] 
has the unique solution
\[
(\overline{x},\overline{y}) 
= \left(\frac{1}{1+\sqrt{1+\gamma}},\frac{1}{1+\sqrt{1+\gamma}}\right). 
\]
This point is a smooth, nondegenerate, isolated, strictly minimal critical point which, furthermore, lies inside the domain of $f(x,y)$ by hypothesis. Under these conditions, the PW method guarantees the existence of a constant $K$ such that
\[ 
f_{m,m} \sim \frac{K}{\sqrt{m}} (\overline{x}\cdot \overline{y})^{-m},
\]
and the result follows.
\end{proof}

We will consider from now on the following sequences:
\begin{equation*}
\mathfrak{p}_{m,n} =
\begin{cases}
0 , & \text{if $m =0$,} \\
A^m , & \text{if $m\geq1, n=0$,} \\
\mathfrak{p}_{m-1,n} + \mathfrak{p}_{m,n-1}+ \gamma \mathfrak{p}_{m-1,n-1}, & \text{if $mn>0$,}
\end{cases}
\end{equation*}
\begin{equation*}
\mathfrak{q}_{m,n} =
\begin{cases}
0 , & \text{if $n =0$,} \\
B^m , & \text{if $n\geq1, m=0$,} \\
\mathfrak{q}_{m-1,n}+ \mathfrak{q}_{m,n-1} + \gamma \mathfrak{q}_{m-1,n-1}, & \text{if $mn>0$,}
\end{cases}
\end{equation*}
and
\begin{equation*}
\mathfrak{r}_{m,n} =
\begin{cases}
1 , & \text{if $n=m =0$,} \\
0 , & \text{if $n =0, m\geq1$,} \\
0 , & \text{if $n\geq1, m=0$,} \\
\mathfrak{r}_{m-1,n}+ \mathfrak{r}_{m,n-1}+ \gamma \mathfrak{r}_{m-1,n-1}, & \text{if $mn>0$.}
\end{cases}
\end{equation*}
Observe that, if $A$ or $B$ are null, then $\mathfrak{p}_{m,n}$ or $\mathfrak{q}_{m,n}$ are, respectively, null sequences. The following lemma shows by we are interested in these new sequences.

\begin{lemma} 
\label{lem:fpqr}
With all the previous notation, we have that
\[
f_{m,n} = \mathfrak{p}_{m,n} + \mathfrak{q}_{m,n} + \mathfrak{r}_{m,n}, 
\quad m,n \ge 0.
\]
\end{lemma}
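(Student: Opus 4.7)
The proof will be a straightforward induction on $m+n$, exploiting the linearity of the recurrence. The plan is to observe that the sum
\[
s_{m,n} := \mathfrak{p}_{m,n} + \mathfrak{q}_{m,n} + \mathfrak{r}_{m,n}
\]
satisfies the same recurrence as $f_{m,n}$ when $mn>0$ (since each summand does), so it suffices to verify that $s_{m,n}=f_{m,n}$ on the coordinate axes $mn=0$; the equality then propagates to the whole first quadrant.

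The first step is therefore to check the three boundary cases separately. At the origin, $\mathfrak{p}_{0,0}=\mathfrak{q}_{0,0}=0$ and $\mathfrak{r}_{0,0}=1$, so $s_{0,0}=1=A^{0}B^{0}=f_{0,0}$. On the positive horizontal axis ($m\geq 1$, $n=0$), only $\mathfrak{p}$ contributes: $s_{m,0}=A^{m}+0+0=A^{m}=f_{m,0}$. On the positive vertical axis ($m=0$, $n\geq 1$), only $\mathfrak{q}$ contributes (reading the defining condition in the symmetric way, so that $\mathfrak{q}_{0,n}=B^{n}$), giving $s_{0,n}=0+B^{n}+0=B^{n}=f_{0,n}$. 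Hence $s$ and $f$ agree on $\{mn=0\}$.

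The second step is the induction proper. Suppose $s_{i,j}=f_{i,j}$ for every $(i,j)$ with $i+j<k$, where $k\geq 2$. For any $(m,n)$ with $m+n=k$ and $mn>0$, both $f_{m,n}$ and $s_{m,n}$ are given by the same linear combination of values at $(m-1,n)$, $(m,n-1)$ and $(m-1,n-1)$, each of which has index sum at most $k-1$. The inductive hypothesis then yields $s_{m,n}=f_{m,n}$, closing the induction.

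There is no real obstacle here: the argument is purely bookkeeping, and the only subtle point is recognising that the initial conditions for $\mathfrak{p}$, $\mathfrak{q}$, $\mathfrak{r}$ have been engineered precisely so that their superposition reproduces the initial condition $A^{m}B^{n}$ of $f$ on the axes, while the common recurrence is preserved by addition.
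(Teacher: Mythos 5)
Your proof is correct and is essentially the paper's own argument: the paper disposes of this lemma by saying it ``follows inductively,'' and your verification that the three boundary conditions superpose to give $A^mB^n$ on $\{mn=0\}$ (including the sensible symmetric reading $\mathfrak{q}_{0,n}=B^{n}$), combined with the fact that all three sequences obey the same linear recurrence for $mn>0$, is precisely that induction written out in full.
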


\begin{proof}
It follows inductively.
\end{proof}

\begin{lemma} 
\label{lem:3fpqr}
Let $f_{\mathfrak{p}}(x,y)$, $f_{\mathfrak{q}}(x,y)$, and $f_{\mathfrak{r}}(x,y)$ be the generating functions of the sequences $\{\mathfrak{p}_{m,n}\}$, $\{\mathfrak{q}_{m,n}\}$, and $\{\mathfrak{r}_{m,n}\}$, respectively. Then,
\begin{itemize}
\item If $|x|<1/|A|$, it holds that $f_{\mathfrak{p}}(x,y) = \frac{Ax - A x^2}{(1-Ax)(1-x-y-\gamma x y)}$.
\item If $|y|<1/|B|$, it holds that $f_{\mathfrak{q}}(x,y) = \frac{Bx - B x^2}{(1-Ax)(1-x-y-\gamma x y)}$. 
\item For every $x,y$, it holds that $f_{\mathfrak{r}}(x,y) = \frac{1-x-y}{1-x-y-\gamma x y}$.
\end{itemize}
\end{lemma}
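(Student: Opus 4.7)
The plan is to imitate the generating-function computation carried out in the proof of Theorem~\ref{th:GFfmn}, applied to each of the three sequences separately. In each case the relevant sequence satisfies the \emph{same} interior recurrence as $f_{m,n}$, namely $u_{m,n}=u_{m-1,n}+u_{m,n-1}+\gamma u_{m-1,n-1}$ for $mn>0$, but with much simpler boundary data. So after splitting off the axes and reindexing, a single linear equation for each generating function will appear, from which the claimed rational expression can be read off.

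I will do the computation for $\mathfrak{p}$ explicitly, since $\mathfrak{q}$ follows by the obvious $(x,A)\leftrightarrow(y,B)$ symmetry and $\mathfrak{r}$ is even simpler. Write $f_\mathfrak{p}(x,y)=\sum_{m\ge 1}A^mx^m+\sum_{m,n\ge 1}\mathfrak{p}_{m,n}x^my^n$ (the terms with $n\ge 1,m=0$ vanish by definition). The first sum equals $Ax/(1-Ax)$ provided $|Ax|<1$. Applying the recurrence to the inner sum and shifting indices, as in the proof of Theorem~\ref{th:GFfmn}, yields
\begin{align*}
\sum_{m,n\ge 1}\mathfrak{p}_{m,n}x^my^n
&= x\!\left(f_\mathfrak{p}(x,y)-\sum_{m\ge 0}\mathfrak{p}_{m,0}x^m\right)
+ y\!\left(f_\mathfrak{p}(x,y)-\sum_{n\ge 0}\mathfrak{p}_{0,n}y^n\right)
+ \gamma xy\,f_\mathfrak{p}(x,y)\\
&= x\!\left(f_\mathfrak{p}(x,y)-\frac{Ax}{1-Ax}\right) + y\,f_\mathfrak{p}(x,y) + \gamma xy\,f_\mathfrak{p}(x,y),
\end{align*}
where the crucial simplification is that the vertical-axis contribution vanishes because $\mathfrak{p}_{0,n}=0$ for every $n\ge 0$. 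Substituting back and clearing $f_\mathfrak{p}(x,y)$ gives $(1-x-y-\gamma xy)f_\mathfrak{p}(x,y)=(Ax-Ax^2)/(1-Ax)$, which is the stated formula for~$f_\mathfrak{p}$.

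For $\mathfrak{q}$ the same argument applies with the roles of $x$ and $y$ (and of $A$ and $B$) swapped: now it is the horizontal-axis contribution that vanishes, and the geometric-series step requires $|By|<1$. For $\mathfrak{r}$, both axis sums collapse to the single term $\mathfrak{r}_{0,0}=1$, so no convergence hypothesis is needed; the same routine manipulation gives $(1-x-y-\gamma xy)f_\mathfrak{r}(x,y)=1-x-y$, i.e. the advertised formula. There is no genuine obstacle here: once the pattern of Theorem~\ref{th:GFfmn} is in hand, the main care is simply in keeping track of which axis contributes to the boundary correction in each of the three cases.
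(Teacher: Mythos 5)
Your proof is correct and takes essentially the same route as the paper, which simply points back to the computation of Theorem~\ref{th:GFfmn} (via Lemma~\ref{lem:generatriz}) with the modified boundary data; you just carry out that computation explicitly for each of the three sequences. One remark: your symmetry argument for $\mathfrak{q}$ yields $f_{\mathfrak{q}}(x,y)=\frac{By-By^2}{(1-By)(1-x-y-\gamma xy)}$, which is the intended formula---the expression $\frac{Bx-Bx^2}{(1-Ax)(1-x-y-\gamma xy)}$ in the statement is evidently a typo.
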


\begin{proof} 
See Lemma~\ref{lem:generatriz}.
\end{proof}

\begin{proposition} 
\label{prop:PQR} 
With all the previous notation, we have:
\begin{itemize}
\item If $A<1+\sqrt{1+\gamma}$, then there exists a constant $P$ such that 
\[ 
\mathfrak{p}_{m,m} \sim \frac{P}{\sqrt{m}} \left(1+\sqrt{1+\gamma}\right)^{2m}.
\]
\item If $B<1+\sqrt{1+\gamma}$, then there exists a constant $Q$ such that 
\[ 
\mathfrak{q}_{m,m} \sim \frac{Q}{\sqrt{m}} \left(1+\sqrt{1+\gamma}\right)^{2m}.
\]
\item There exists a constant $R$ such that
\[ 
\mathfrak{r}_{m,m} \sim \frac{R}{\sqrt{m}} \left(1+\sqrt{1+\gamma}\right)^{2m}.
\]
\end{itemize}

\end{proposition}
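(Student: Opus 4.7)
The plan is to apply the Pemantle--Wilson method separately to each of the three generating functions in Lemma~\ref{lem:3fpqr}, exactly as in the proof of Proposition~\ref{prop:principal}. The decisive observation is that all three rational functions share the same ``dominant'' singular factor $H(x,y):=1-x-y-\gamma x y$ in the denominator. Since the critical-point analysis in the PW method depends only on $H$ (and on the location where the other factors of the denominator vanish), the entire computation carried out in Proposition~\ref{prop:principal} for $H$ can be reused verbatim: the unique smooth, nondegenerate, isolated, strictly minimal critical point for the diagonal direction is
\[
(\overline{x},\overline{y}) = \left(\frac{1}{1+\sqrt{1+\gamma}},\frac{1}{1+\sqrt{1+\gamma}}\right),
\]
and the resulting exponential growth rate is $(\overline{x}\,\overline{y})^{-m} = (1+\sqrt{1+\gamma})^{2m}$ with the characteristic $1/\sqrt{m}$ prefactor.

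What changes between the three cases is only the ``analytic'' part that multiplies $1/H$. For $f_{\mathfrak{r}}(x,y)=(1-x-y)/H(x,y)$ the numerator is a polynomial, so no extra hypothesis is needed and the PW method produces the asymptotic with some constant $R$. For $f_{\mathfrak{p}}(x,y)=Ax(1-x)/\bigl((1-Ax)H(x,y)\bigr)$ the extra factor $1-Ax$ in the denominator must be absorbed into the ``numerator part'', which requires that it does not vanish at $(\overline{x},\overline{y})$; this is equivalent to $A\overline{x}\ne 1$, and the hypothesis $A<1+\sqrt{1+\gamma}$ gives exactly $A\overline{x}<1$. Under this hypothesis, $Ax(1-x)/(1-Ax)$ is holomorphic in a neighborhood of $(\overline{x},\overline{y})$, so the PW conclusion applies. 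The argument for $\mathfrak{q}_{m,m}$ is identical, using the condition $B<1+\sqrt{1+\gamma}$ to guarantee that $1-By$ is nonzero at the critical point.

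The constants $P$, $Q$, $R$ are produced by the explicit PW formula in terms of the values of the respective analytic numerators at $(\overline{x},\overline{y})$ and the Hessian data of $H$ there; I would not attempt to write them in closed form, since only their existence is claimed.

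The only point that needs any genuine care is the verification that the critical point actually lies in the domain of analyticity and is strictly minimal as a contributing singularity. Strict minimality for $H=0$ was already established in Proposition~\ref{prop:principal}, and under the stated hypotheses the additional linear factors $1-Ax$ or $1-By$ do not create singularities closer to the origin, so this reduces to invoking the earlier verification. A minor degenerate case worth flagging is $\gamma=0$ in the statement for $\mathfrak{r}_{m,m}$: then $1-x-y$ vanishes at $(\overline{x},\overline{y})=(1/2,1/2)$, the leading PW constant is $R=0$, and indeed a direct induction shows $\mathfrak{r}_{m,m}=0$ for $m\ge 1$, so the asymptotic holds trivially.
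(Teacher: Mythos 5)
Your argument is exactly the paper's: apply the Pemantle--Wilson analysis of Proposition~\ref{prop:principal} to each generating function in Lemma~\ref{lem:3fpqr}, checking that under $A<1+\sqrt{1+\gamma}$ (resp.\ $B<1+\sqrt{1+\gamma}$) the extra factor $1-Ax$ (resp.\ $1-By$) does not vanish on the relevant polydisk, so the same strictly minimal smooth critical point $\bigl(\tfrac{1}{1+\sqrt{1+\gamma}},\tfrac{1}{1+\sqrt{1+\gamma}}\bigr)$ governs all three diagonals. Your additional remarks (silently correcting the symmetric form of $f_{\mathfrak{q}}$ and flagging the degenerate case $\gamma=0$, where $R=0$ and $\mathfrak{r}_{m,m}$ vanishes) are correct refinements but do not change the route.
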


\begin{proof}
It is enough to proceed exactly as in Proposition~\ref{prop:principal}, taking into account the generating functions obtained in Lemma~\ref{lem:3fpqr} and observing that, in each case the critical points lie in the domain of the corresponding generating function so that the PW method can be used.
\end{proof}

For $A\neq 1$, we define the following sequences:
\[
S_{m,n} =
\begin{cases}
0 , & \text{if $m =0$,} \\
1 , & \text{if $m\geq1, n=0$,} \\
S_{m-1,n} + S_{m,n-1} + \gamma S_{m-1,n-1}, & \text{if $mn>0$,}
\end{cases}
\]
\[
\mathfrak{t}_{m,n} =
\begin{cases}
\left( \frac{A+\gamma}{A-1}\right )^n , & \text{if $m n =0$,} \\
\mathfrak{t}_{m-1,n} + \mathfrak{t}_{m,n-1} + \gamma \mathfrak{t}_{m-1,n-1}, & \text{if $mn>0$,}
\end{cases}
\]
and
\[ 
G_{m,n} = A^m \left( \frac{A+\gamma}{A-1}\right )^n, \quad m,n\geq 0.
\]
Since $A \cdot \frac{A+\gamma}{A-1} = A + \frac{A+\gamma}{A-1} + \gamma$, Proposition~\ref{prop:AmBn} implies that $G_{m,n} = G_{m-1,n}+ G_{m,n-1}+ \gamma G_{m-1,n-1}$. Furthermore, we have the following decomposition.

\begin{lemma} 
\label{lem:pSGt}
If $A\neq1$, then
\[
\mathfrak{p}_{m,n}=S_{m,n}+G_{m,n}- \mathfrak{t}_{m,n}, \quad m,n\geq 0.
\]
\end{lemma}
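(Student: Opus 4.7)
The plan is to define $H_{m,n} := S_{m,n} + G_{m,n} - \mathfrak{t}_{m,n}$ and prove $H_{m,n} = \mathfrak{p}_{m,n}$ by induction on $m+n$, exploiting that $\mathfrak{p}$ is uniquely determined by its values on the axes together with the linear recurrence $f_{m,n}=f_{m-1,n}+f_{m,n-1}+\gamma f_{m-1,n-1}$ valid for $mn>0$.

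First I would check the three boundary cases. For $m=0$, $n\ge 0$: $S_{0,n}=0$ and $G_{0,n}=\bigl((A+\gamma)/(A-1)\bigr)^n=\mathfrak{t}_{0,n}$, so $H_{0,n}=0=\mathfrak{p}_{0,n}$. For $n=0$, $m\ge 1$: $S_{m,0}=1$, $G_{m,0}=A^m$ and $\mathfrak{t}_{m,0}=1$, so $H_{m,0}=1+A^m-1=A^m=\mathfrak{p}_{m,0}$. The case $m=n=0$ gives $H_{0,0}=0+1-1=0=\mathfrak{p}_{0,0}$.

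Next I would verify that $H$ satisfies the same recurrence as $\mathfrak{p}$ for $mn>0$. Since the recurrence is linear, it suffices to check it for each summand. The sequence $S$ satisfies it by definition, and so does $\mathfrak{t}$. For $G_{m,n}=A^m\bigl((A+\gamma)/(A-1)\bigr)^n$, the identity
\[
A\cdot\frac{A+\gamma}{A-1}=A+\frac{A+\gamma}{A-1}+\gamma
\]
is precisely the condition $AB=A+B+\gamma$ (with $B=(A+\gamma)/(A-1)$, which is the point of the hypothesis $A\ne 1$), so Proposition~\ref{prop:AmBn} (applied to the sequence with those axis values, which coincides with the explicit product $A^mB^n$) guarantees that $G$ obeys the recurrence for every $m,n\ge 1$. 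Therefore $H$ does as well.

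Combining the two ingredients, $H$ and $\mathfrak{p}$ satisfy the same recurrence for $mn>0$ and agree on $\{m=0\}\cup\{n=0\}$; an immediate induction on $m+n$ yields $H_{m,n}=\mathfrak{p}_{m,n}$ for all $m,n\ge 0$. There is no real obstacle here; the only non-obvious ingredient is the choice of $B=(A+\gamma)/(A-1)$, but this is already built into the definitions of $G_{m,n}$ and $\mathfrak{t}_{m,n}$ preceding the lemma, and it is exactly what makes the boundary contributions of $G$ and $\mathfrak{t}$ cancel on the vertical axis.
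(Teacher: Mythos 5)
Your proof is correct and follows essentially the same route as the paper: all the sequences involved satisfy the same linear recurrence (with $G$ handled via the identity $A\cdot\frac{A+\gamma}{A-1}=A+\frac{A+\gamma}{A-1}+\gamma$ and Proposition~\ref{prop:AmBn}), so it suffices to check agreement on the axes and induct. You simply spell out the boundary verification and the induction that the paper leaves as ``trivially verified.''
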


\begin{proof}
Since the four sequences involved satisfy the same recurrence, it is enough to observe that the equality holds for $m n = 0$, which is trivially verified.
\end{proof}

\begin{proposition}
\label{prop:S}
There exists a constant $S$ such that
\[ 
S_{m,m} \sim \frac{S}{\sqrt{m}} \left( 1+ \sqrt{1+\gamma}\right)^{2 m}.
\]
\end{proposition}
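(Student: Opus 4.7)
The plan is to mirror almost verbatim the strategy used in Proposition~\ref{prop:principal} (and reused in Proposition~\ref{prop:PQR}): first obtain a rational generating function $f_S(x,y)$ for $\{S_{m,n}\}$ in the form numerator over $1-x-y-\gamma xy$, and then extract the diagonal asymptotics via the Pemantle--Wilson method. The main simplification compared with the $\mathfrak{p}$, $\mathfrak{q}$, $\mathfrak{r}$ cases is that the sequence $\{S_{m,n}\}$ does not involve the parameters $A$ or $B$ at all, so the ``boundary factors'' $(1-Ax)$ and $(1-By)$ do not appear and the resulting rational function is as simple as possible.

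First I would derive $f_S(x,y)$. The quickest route is to specialize the formula from Lemma~\ref{lem:3fpqr} for $f_{\mathfrak{p}}$ to $A=1$, obtaining
\[
f_S(x,y)=\frac{x-x^2}{(1-x)(1-x-y-\gamma xy)}=\frac{x}{1-x-y-\gamma xy},
\]
but since we do not assume $A=1$ elsewhere, a cleaner option is to reproduce the direct argument of Theorem~\ref{th:GFfmn}: split the double sum according to whether $mn=0$ or $mn>0$, substitute the recurrence, identify the shifted sums with $f_S(x,y)$ itself (minus the axis contribution $\sum_{m\ge 1}x^m = x/(1-x)$), and solve. Either way, one lands at the expression above, valid near the origin.

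Next I would apply the PW method to $f_S(x,y)=I_S(x,y)/H(x,y)$ with $I_S(x,y)=x$ and $H(x,y)=1-x-y-\gamma xy$. The critical-point system $H=0$, $xH_x=yH_y$ forces $x=y$ (since $xH_x-yH_y=-x+y$) and then $1-2x-\gamma x^2=0$, whose positive root is
\[
(\overline{x},\overline{y})=\left(\frac{1}{1+\sqrt{1+\gamma}},\,\frac{1}{1+\sqrt{1+\gamma}}\right),
\]
exactly as in Proposition~\ref{prop:principal}. Because $I_S(\overline{x},\overline{y})=\overline{x}\neq 0$ and $H$ is smooth at $(\overline{x},\overline{y})$, this is a smooth, nondegenerate, isolated, strictly minimal critical point of $f_S$; moreover it clearly lies in the domain of analyticity of the numerator (which is entire). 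The PW method then yields a constant $S$ with
\[
S_{m,m}\sim \frac{S}{\sqrt{m}}\,(\overline{x}\,\overline{y})^{-m}
=\frac{S}{\sqrt{m}}\bigl(1+\sqrt{1+\gamma}\bigr)^{2m},
\]
which is the claimed estimate.

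The main obstacle is essentially bookkeeping: checking that the PW hypotheses (smoothness, nondegeneracy, strict minimality of the critical point on the diagonal ray, analyticity of the numerator at the critical point) really do hold in this specialization. All of these verifications are identical to those tacitly performed in Propositions~\ref{prop:principal} and~\ref{prop:PQR}, and in fact they become trivial here because the numerator is just $x$ with no poles introduced by $A$- or $B$-dependent factors; there is no longer any need for an assumption analogous to $A<1+\sqrt{1+\gamma}$ or $B<1+\sqrt{1+\gamma}$, which is why the statement is unconditional.
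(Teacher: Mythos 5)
Your proposal is correct and follows essentially the same route as the paper: the paper likewise obtains $f_S(x,y)=x/(1-x-y-\gamma xy)$ (in effect Lemma~\ref{lem:3fpqr} with $A=1$) and then argues exactly as in Proposition~\ref{prop:principal}, observing that $\tfrac{1}{1+\sqrt{1+\gamma}}<1$ so the critical point lies in the domain and the PW method applies. Your explicit verification of the critical-point system and of the (now trivial) PW hypotheses only spells out what the paper leaves implicit.
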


\begin{proof}
It is easily seen that, if $|x|<1$, the generating function of $S_{m,n}$ satisfies that 
\[
 f_S (x,y)=\sum_{m,n\geq0}S_{m,n} x^m y^n = \frac{x}{1-x-y-\gamma x y}.
\]
Then, it is enough to argue as in Proposition~\ref{prop:principal} noting that $\frac{1}{1+\sqrt{1+\gamma}}<1$, so that the critical point is in the domain of the generating function and the PW method can be applied.
\end{proof}

\begin{proposition} 
\label{prop:T}
If $A>1+\sqrt{1+\gamma}$, then there exists a constant $T$ such that
\[ 
\mathfrak{t}_{m,m} \sim \frac{T}{\sqrt{m}} \left( 1+ \sqrt{1+\gamma}\right)^{2 m}.
\]
\end{proposition}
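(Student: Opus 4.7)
My plan is to reduce the statement directly to Proposition~\ref{prop:principal} by recognising $\{\mathfrak{t}_{m,n}\}$ as an instance of the family $\{f_{m,n}\}$ from \eqref{eq:recuab1}. Concretely, the sequence $\mathfrak{t}_{m,n}$ satisfies the very same interior recurrence as $f_{m,n}$, and its boundary values $\mathfrak{t}_{m,0}=1$ and $\mathfrak{t}_{0,n}=\bigl(\tfrac{A+\gamma}{A-1}\bigr)^{n}$ agree with those of $f_{m,n}$ once one sets
\[
A'=1,\qquad B'=\frac{A+\gamma}{A-1},
\]
while keeping the same $\gamma$. This identification is checked by inspection on each coordinate axis: $(A')^{m}(B')^{0}=1$ and $(A')^{0}(B')^{n}=\bigl(\tfrac{A+\gamma}{A-1}\bigr)^{n}$.

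Next, I would verify that the pair $(A',B')$ meets the hypotheses of Proposition~\ref{prop:principal}. The condition $A'<1+\sqrt{1+\gamma}$ is immediate, since $A'=1$ and $\gamma\ge 0$ forces $1+\sqrt{1+\gamma}\ge 2>1$. For $B'$, the hypothesis $A>1+\sqrt{1+\gamma}$ guarantees $A-1>0$, so I can clear the denominator without flipping the inequality; a short algebraic manipulation gives
\[
\frac{A+\gamma}{A-1}<1+\sqrt{1+\gamma}
\iff \gamma+1<(A-1)\sqrt{1+\gamma}
\iff \sqrt{1+\gamma}<A-1,
\]
which is precisely the assumption. Hence $B'<1+\sqrt{1+\gamma}$. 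Proposition~\ref{prop:principal}, applied to the sequence $f_{m,n}$ with parameters $(A',B',\gamma)$, then yields a constant $T$ such that
\[
\mathfrak{t}_{m,m}\sim \frac{T}{\sqrt{m}}\bigl(1+\sqrt{1+\gamma}\bigr)^{2m},
\]
as required.

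There is essentially no substantive obstacle; the whole argument is a parameter reinterpretation. The only places requiring a little care are the algebraic equivalence above (where the direction of the inequality is preserved precisely because $A>1+\sqrt{1+\gamma}$ makes $A-1$ positive) and the observation that $A'=1$ is admissible in Proposition~\ref{prop:principal}: inspecting its proof via the PW method, the relevant critical point $(\overline{x},\overline{y})=\bigl(\tfrac{1}{1+\sqrt{1+\gamma}},\tfrac{1}{1+\sqrt{1+\gamma}}\bigr)$ lies strictly inside the disk $|x|<1/A'=1$, so the generating function $f_{\mathfrak{t}}(x,y)$ remains holomorphic at this critical point and the residue computation goes through verbatim.
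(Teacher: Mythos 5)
Your proof is correct and takes essentially the same approach as the paper: in both, the whole point is the inequality $\frac{A+\gamma}{A-1}<1+\sqrt{1+\gamma}$ (equivalent to $A>1+\sqrt{1+\gamma}$ since $A-1>0$), which places the critical point $\left(\frac{1}{1+\sqrt{1+\gamma}},\frac{1}{1+\sqrt{1+\gamma}}\right)$ strictly inside the domain of analyticity so that the PW argument of Proposition~\ref{prop:principal} applies. The only cosmetic difference is that you invoke Proposition~\ref{prop:principal} after reinterpreting $\mathfrak{t}_{m,n}$ as the sequence \eqref{eq:recuab1} with parameters $A'=1$, $B'=\frac{A+\gamma}{A-1}$, whereas the paper writes out $f_{\mathfrak{t}}(x,y)$ (which is exactly the generating function of that reparametrized sequence) and repeats the same PW reasoning.
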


\begin{proof}
The generating function of $\mathfrak{t}_{m,n}$ is 
\[
f_\mathfrak{t} (x,y)=\sum_{m,n\geq0} \mathfrak{t}_{m,n} x^m y^n 
= \frac{1-y}{(1-x-y-\gamma x y)\left(1-y \frac{A+\gamma}{A-1}\right)}, 
\]
provided $|y|< \frac{A-1}{A+\gamma}$.

We will proceed as in previous propositions. In this case, in order to be able to apply the PW method, we have to check that $ \frac{1}{1+\sqrt{1+\gamma}} < \frac{A-1}{A-\gamma}$:
\[
A > 1+ \sqrt{1+\gamma} 
\;\Rightarrow\; \frac{A+\gamma}{A-1}< 1+ \sqrt{1+\gamma} 
\;\Rightarrow\; \frac{1}{1+\sqrt{1+\gamma}}< \frac{A-1}{A-\gamma}.
\]
Thus, we apply the usual reasoning and the result follows.
\end{proof}

\begin{proposition}
If $A>1+\sqrt{1+\gamma}$, then 
\[
\mathfrak{p}_{m,m}\sim A^m \left( \frac{A+\gamma}{A-1}\right)^m =G_{m,m}.
\]
\end{proposition}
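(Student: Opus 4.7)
The plan is to use the decomposition $\mathfrak{p}_{m,m} = S_{m,m} + G_{m,m} - \mathfrak{t}_{m,m}$ from Lemma~\ref{lem:pSGt}, and then show that, under the hypothesis $A > 1+\sqrt{1+\gamma}$, the geometric term $G_{m,m}$ dominates both $S_{m,m}$ and $\mathfrak{t}_{m,m}$ exponentially. Concretely, writing
\[
G_{m,m} = A^m\left(\frac{A+\gamma}{A-1}\right)^m = \left(\frac{A(A+\gamma)}{A-1}\right)^m,
\]
the goal reduces to proving the inequality
\[
\frac{A(A+\gamma)}{A-1} > \left(1+\sqrt{1+\gamma}\right)^2
\]
whenever $A>1+\sqrt{1+\gamma}$, since this is precisely the exponential growth rate of $S_{m,m}$ and $\mathfrak{t}_{m,m}$ appearing in Propositions~\ref{prop:S} and \ref{prop:T}.

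First I would record the boundary value: setting $u=\sqrt{1+\gamma}$ and $A=1+u$, one has $A-1=u$ and $A+\gamma=u^2+u=u(u+1)$, so that $A(A+\gamma)/(A-1)=(1+u)^2$. Thus at $A=1+\sqrt{1+\gamma}$ both growth rates coincide. Then I would check that $h(A):=A(A+\gamma)/(A-1)$ is strictly increasing on $A>1+\sqrt{1+\gamma}$ by computing
\[
h'(A)=\frac{A^2-2A-\gamma}{(A-1)^2},
\]
whose numerator is positive exactly when $A>1+\sqrt{1+\gamma}$. This gives the strict inequality above.

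Combining this with the asymptotics $S_{m,m}\sim S m^{-1/2}(1+\sqrt{1+\gamma})^{2m}$ and $\mathfrak{t}_{m,m}\sim T m^{-1/2}(1+\sqrt{1+\gamma})^{2m}$, we obtain
\[
\frac{S_{m,m}}{G_{m,m}}\longrightarrow 0,\qquad \frac{\mathfrak{t}_{m,m}}{G_{m,m}}\longrightarrow 0,
\]
so that $\mathfrak{p}_{m,m}=G_{m,m}(1+o(1))$, which is precisely the claim. No real obstacle is anticipated here; the only subtle point is verifying the monotonicity of $h(A)$ at the right threshold, which is the reason the hypothesis $A>1+\sqrt{1+\gamma}$ (rather than merely $A\ge 1$) is needed, and it matches exactly the regime in which Proposition~\ref{prop:T} is available so that the decomposition of Lemma~\ref{lem:pSGt} can be used together with the known asymptotics of each term.
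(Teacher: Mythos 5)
Your proposal is correct and follows essentially the same route as the paper: decompose via Lemma~\ref{lem:pSGt}, invoke Propositions~\ref{prop:S} and~\ref{prop:T}, and conclude from the strict inequality $A(A+\gamma)/(A-1) > \left(1+\sqrt{1+\gamma}\right)^2$ after dividing by $G_{m,m}$. The only difference is cosmetic: you justify that inequality by a short monotonicity computation, whereas the paper simply asserts it.
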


\begin{proof}
In the first place, let us observe that, if $A \neq 1+\sqrt{1+ \gamma}$, then $ A\left(\frac{A+\gamma}{A-1} \right) > (1+\sqrt{1+\gamma})^2$. Then, it is enough to apply Lemma~\ref{lem:pSGt}, Propositions~\ref{prop:S} and~\ref{prop:T}, and divide by $G_{m,m}$.
\end{proof}

\begin{proposition}
If $B>1+\sqrt{1+\gamma}$, then 
\[
\mathfrak{q}_{m,m} \sim B^m \left( \frac{B+\gamma}{B-1}\right)^m.
\]
\end{proposition}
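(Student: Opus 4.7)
The approach I would take is to exploit the symmetry of the setup under swapping $m \leftrightarrow n$ and $A \leftrightarrow B$. Since in the recurrence \eqref{eq:recuab1} we have $\alpha=\beta=1$, the roles of the two axis directions are completely interchangeable, so the sequence $\mathfrak{q}$ is nothing but a relabeled version of $\mathfrak{p}$ with $A$ replaced by $B$. This should allow me to invoke the previous proposition verbatim, avoiding a repetition of the three-term decomposition $\mathfrak{p}=S+G-\mathfrak{t}$ and the associated PW arguments.

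First I would make the symmetry precise. Let $\widetilde{\mathfrak{p}}_{m,n}$ denote the sequence obtained from $\mathfrak{p}_{m,n}$ by replacing $A$ with $B$. A short induction on $m+n$ shows that $\widetilde{\mathfrak{p}}_{n,m}=\mathfrak{q}_{m,n}$ for all $m,n\geq 0$. The base cases are immediate: for $m=0$ one has $\widetilde{\mathfrak{p}}_{n,0}=B^n$ when $n\geq 1$ and $\widetilde{\mathfrak{p}}_{0,0}=0$, which matches $\mathfrak{q}_{0,n}$; for $n=0$ one has $\widetilde{\mathfrak{p}}_{0,m}=0=\mathfrak{q}_{m,0}$. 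The inductive step is immediate from the fact that the recurrence $u_{m,n}=u_{m-1,n}+u_{m,n-1}+\gamma u_{m-1,n-1}$ is symmetric in its two arguments. In particular, setting $m=n$ gives $\mathfrak{q}_{m,m}=\widetilde{\mathfrak{p}}_{m,m}$.

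Second, I would apply the previous proposition to $\widetilde{\mathfrak{p}}$. Since $\widetilde{\mathfrak{p}}$ is the sequence $\mathfrak{p}$ with $A$ replaced by $B$, the hypothesis $A>1+\sqrt{1+\gamma}$ of that proposition translates into $B>1+\sqrt{1+\gamma}$, which is exactly our assumption. Consequently,
\[
\widetilde{\mathfrak{p}}_{m,m}\sim B^m\left(\frac{B+\gamma}{B-1}\right)^m,
\]
and combining this with the identity $\mathfrak{q}_{m,m}=\widetilde{\mathfrak{p}}_{m,m}$ yields the desired asymptotic.

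The only (minor) obstacle is the careful bookkeeping in the symmetry step: one must check that the slightly asymmetric-looking initial conditions defining $\mathfrak{p}$ and $\mathfrak{q}$ (with $0$ on one axis and a geometric progression on the other) are genuinely interchanged under the swap $(m,n)\mapsto(n,m)$, $A\mapsto B$. Once that is verified, the argument is a direct transfer of Proposition~\ref{prop:T}'s consequence, and no further invocation of the PW method is required.
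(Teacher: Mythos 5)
Your proposal is correct and follows the same route as the paper, whose proof is simply to repeat the previous proposition with the roles of $A$ and $B$ interchanged; you merely make that symmetry explicit by showing $\mathfrak{q}_{m,n}$ is the transpose of $\mathfrak{p}_{m,n}$ with $A$ replaced by $B$ (using the intended initial condition $\mathfrak{q}_{0,n}=B^n$), which is a clean way to justify the transfer. No gaps.
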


\begin{proof}
Procceed as in the previous proposition, changing the roles of $A$ and~$B$.
\end{proof}

\begin{corollary}
\label{cor:ABgrandes}
If $A,B>1+\sqrt{1+\gamma}$, then the $f_{m,n}$ defined as in \eqref{eq:recuab1} satisfy
\[ 
f_{m,m} \sim
\begin{cases}
A^m \left( \frac{A+\gamma}{A-1}\right )^m , & \text{if $A>B$,} \\
B^m \left( \frac{B+\gamma}{B-1}\right )^m , & \text{if $B>A$.}
\end{cases}
\]
\end{corollary}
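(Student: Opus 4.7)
The plan is to combine the three preceding propositions (the asymptotics of $\mathfrak{p}_{m,m}$, $\mathfrak{q}_{m,m}$, and $\mathfrak{r}_{m,m}$) through the decomposition $f_{m,m}=\mathfrak{p}_{m,m}+\mathfrak{q}_{m,m}+\mathfrak{r}_{m,m}$ provided by Lemma~\ref{lem:fpqr}, and then argue that only the larger of the two geometric-type terms survives in the asymptotic ratio.

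First I would record, under the hypothesis $A,B>1+\sqrt{1+\gamma}$, the three asymptotic estimates that are already available:
\[
\mathfrak{p}_{m,m}\sim A^{m}\Bigl(\tfrac{A+\gamma}{A-1}\Bigr)^{m},\quad
\mathfrak{q}_{m,m}\sim B^{m}\Bigl(\tfrac{B+\gamma}{B-1}\Bigr)^{m},\quad
\mathfrak{r}_{m,m}\sim \tfrac{R}{\sqrt{m}}\bigl(1+\sqrt{1+\gamma}\bigr)^{2m}.
\]
Observe that, as was noted in the proof of the proposition on $\mathfrak{p}_{m,m}$, whenever $A>1+\sqrt{1+\gamma}$ one has $A\cdot\tfrac{A+\gamma}{A-1}>(1+\sqrt{1+\gamma})^{2}$, and analogously with $B$. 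Hence $\mathfrak{r}_{m,m}$ is exponentially dominated by both $\mathfrak{p}_{m,m}$ and $\mathfrak{q}_{m,m}$, and it contributes negligibly in all quotients we shall form.

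The key step is a monotonicity comparison between $A\cdot\tfrac{A+\gamma}{A-1}$ and $B\cdot\tfrac{B+\gamma}{B-1}$. Consider
\[
h(x)=\frac{x(x+\gamma)}{x-1},\qquad x>1+\sqrt{1+\gamma}.
\]
A direct differentiation yields
\[
h'(x)=\frac{x^{2}-2x-\gamma}{(x-1)^{2}},
\]
whose numerator vanishes exactly at $x=1\pm\sqrt{1+\gamma}$ and is strictly positive for $x>1+\sqrt{1+\gamma}$. Therefore $h$ is strictly increasing on the relevant interval, so if $A>B>1+\sqrt{1+\gamma}$ then $A\cdot\tfrac{A+\gamma}{A-1}>B\cdot\tfrac{B+\gamma}{B-1}$, and symmetrically if $B>A$.

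To finish, assume $A>B$ (the other case is identical). By Lemma~\ref{lem:fpqr},
\[
\frac{f_{m,m}}{A^{m}\bigl(\tfrac{A+\gamma}{A-1}\bigr)^{m}}
=\frac{\mathfrak{p}_{m,m}}{A^{m}\bigl(\tfrac{A+\gamma}{A-1}\bigr)^{m}}
+\frac{\mathfrak{q}_{m,m}}{A^{m}\bigl(\tfrac{A+\gamma}{A-1}\bigr)^{m}}
+\frac{\mathfrak{r}_{m,m}}{A^{m}\bigl(\tfrac{A+\gamma}{A-1}\bigr)^{m}}.
\]
The first term tends to $1$ by the proposition on $\mathfrak{p}_{m,m}$; the second tends to $0$ because $h(B)<h(A)$; the third tends to $0$ because $(1+\sqrt{1+\gamma})^{2}<h(A)$. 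This gives the claimed equivalence, and the symmetric argument settles the case $B>A$. No genuine obstacle is foreseen: the only slightly subtle point is the monotonicity of $h$, which is handled by the single line $h'(x)=(x^{2}-2x-\gamma)/(x-1)^{2}$.
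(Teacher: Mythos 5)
Your proof is correct and follows essentially the same route as the paper: decompose $f_{m,m}=\mathfrak{p}_{m,m}+\mathfrak{q}_{m,m}+\mathfrak{r}_{m,m}$ via Lemma~\ref{lem:fpqr}, invoke the three established asymptotics, and divide by the dominant geometric term. The only difference is cosmetic: you justify $A\frac{A+\gamma}{A-1}>B\frac{B+\gamma}{B-1}$ by showing $h(x)=x(x+\gamma)/(x-1)$ is increasing for $x>1+\sqrt{1+\gamma}$ (which also yields $h(A)>(1+\sqrt{1+\gamma})^2$, since $h(1+\sqrt{1+\gamma})=(1+\sqrt{1+\gamma})^2$), whereas the paper asserts these inequalities more tersely; your version is, if anything, more explicit.
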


\begin{proof}
By Lemma~\ref{lem:fpqr} we have $f_{m,n}=\mathfrak{p}_{m,n}+\mathfrak{q}_{m,n}+\mathfrak{r}_{m,n}$. 
Moreover,
$\mathfrak{p}_{m,m} \sim A^m \left( \frac{A+\gamma}{A-1}\right )^m$, 
$\mathfrak{q}_{m,m}\sim B^m \left( \frac{B+\gamma}{B-1}\right)^m$ 
and
$\mathfrak{r}_{m,m} \sim \frac{R}{\sqrt{m}} \left( 1+\sqrt{1+\gamma} \right)^{2 m}$.

Now, let us suppose $A>B$. Because $A,B > \left( 1+\sqrt{1+\gamma} \right)$, we also have $AB>A+B+\gamma$ and then it follows that $A \left(\frac{A+\gamma}{A-1}\right)> B \left(\frac{B+\gamma}{B-1}\right)$.
We also know that
\[ 
A \left(\frac{A+\gamma}{A-1}\right) > \left( 1+ \sqrt{1+\gamma}\right)^ 2.
\]
Then,
\[
\frac{f_{m,m}}{A^m \left(\frac{A+\gamma}{A-1}\right)^m}
= \frac{\mathfrak{p}_{m,m}}{A^m \left(\frac{A+\gamma}{A-1}\right)^m}
+ \frac{\mathfrak{r}_{m,m}}{A^m \left(\frac{A+\gamma}{A-1}\right)^m} 
\]
and it is enough to take limits to obtain the desired result.

If $B>A$, the reasoning is analogous.
\end{proof}

\begin{corollary} 
If $A=B>1+\sqrt{1+\gamma}$, then $f_{m,m} \sim 2 A ^m \left(\frac{A+\gamma}{A-1}\right)^m$.
\end{corollary}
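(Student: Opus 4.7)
The plan is to repeat the argument used in Corollary~\ref{cor:ABgrandes}, but to note that when $A=B$ the two contributions from $\mathfrak{p}_{m,m}$ and $\mathfrak{q}_{m,m}$ have identical asymptotics and therefore combine into a single term with a factor of~$2$, while the $\mathfrak{r}_{m,m}$ contribution remains strictly subdominant.

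First I would apply Lemma~\ref{lem:fpqr} to write $f_{m,m}=\mathfrak{p}_{m,m}+\mathfrak{q}_{m,m}+\mathfrak{r}_{m,m}$. Under the hypothesis $A=B>1+\sqrt{1+\gamma}$, the two propositions immediately preceding Corollary~\ref{cor:ABgrandes} apply to both $\mathfrak{p}$ and $\mathfrak{q}$ and yield
\[
\mathfrak{p}_{m,m}\sim A^m\left(\frac{A+\gamma}{A-1}\right)^m
\qquad\text{and}\qquad
\mathfrak{q}_{m,m}\sim A^m\left(\frac{A+\gamma}{A-1}\right)^m,
\]
so that their sum is asymptotic to $2A^m\bigl(\frac{A+\gamma}{A-1}\bigr)^m$.

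Next I would verify that the $\mathfrak{r}$-term is negligible at this scale. By the third bullet of Proposition~\ref{prop:PQR}, $\mathfrak{r}_{m,m}\sim R\,m^{-1/2}(1+\sqrt{1+\gamma})^{2m}$. As already observed in the proof of Corollary~\ref{cor:ABgrandes}, the hypothesis $A>1+\sqrt{1+\gamma}$ forces $A\cdot\frac{A+\gamma}{A-1}>(1+\sqrt{1+\gamma})^{2}$, so
\[
\frac{\mathfrak{r}_{m,m}}{A^m\bigl(\frac{A+\gamma}{A-1}\bigr)^m}\longrightarrow 0.
\]
Dividing the decomposition of $f_{m,m}$ by $A^m\bigl(\frac{A+\gamma}{A-1}\bigr)^m$ and taking $m\to\infty$ then gives limit~$2$, which is the claim.

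There is no real obstacle here: this is a clean corollary of the two preceding propositions. The only small point to check is that one may add the asymptotic equivalences for $\mathfrak{p}_{m,m}$ and $\mathfrak{q}_{m,m}$ legitimately, which is standard since both are nonnegative sequences of identical exponential growth rate, so their sum is equivalent to twice the common equivalent.
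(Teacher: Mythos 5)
Your proposal is correct and follows essentially the same route as the paper, which simply instructs the reader to reason as in Corollary~\ref{cor:ABgrandes}: decompose $f_{m,m}=\mathfrak{p}_{m,m}+\mathfrak{q}_{m,m}+\mathfrak{r}_{m,m}$, apply the preceding propositions to both $\mathfrak{p}$ and $\mathfrak{q}$ (which now share the same asymptotics since $A=B$), and note that $\mathfrak{r}_{m,m}$ is exponentially negligible. Your explicit justification of adding the two equivalences is a harmless elaboration of the same argument.
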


\begin{proof}
Reason as in Corollary~\ref{cor:ABgrandes}.
\end{proof}

\begin{proposition} 
If $B<1+\sqrt{1+\gamma} < A$, then $f_{m,m} \sim A^m \left(\frac{A+\gamma}{A-1}\right)^m$.

If $A<1+\sqrt{1+\gamma} < B$, then $f_{m,m} \sim B^m \left(\frac{B+\gamma}{B-1}\right)^m$.
\end{proposition}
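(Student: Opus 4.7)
The plan is to combine the additive decomposition $f_{m,n}=\mathfrak{p}_{m,n}+\mathfrak{q}_{m,n}+\mathfrak{r}_{m,n}$ of Lemma~\ref{lem:fpqr} with the diagonal asymptotics of each summand that have already been established. I only treat the first assertion explicitly, since the second follows by the symmetric argument that interchanges $A$ with $B$ and $\mathfrak{p}$ with $\mathfrak{q}$.

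So, assume $B<1+\sqrt{1+\gamma}<A$. The proposition immediately preceding this one, applied to the case $A>1+\sqrt{1+\gamma}$, gives $\mathfrak{p}_{m,m}\sim A^{m}\bigl(\tfrac{A+\gamma}{A-1}\bigr)^{m}$. Because $B<1+\sqrt{1+\gamma}$, Proposition~\ref{prop:PQR} supplies constants $Q,R$ with $\mathfrak{q}_{m,m}\sim\tfrac{Q}{\sqrt{m}}\bigl(1+\sqrt{1+\gamma}\bigr)^{2m}$ and $\mathfrak{r}_{m,m}\sim\tfrac{R}{\sqrt{m}}\bigl(1+\sqrt{1+\gamma}\bigr)^{2m}$ (when $B=0$ one has $\mathfrak{q}\equiv 0$ and simply takes $Q=0$; this causes no trouble).

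The decisive step is then to compare exponential growth rates. As was noted inside the proof of Corollary~\ref{cor:ABgrandes}, the hypothesis $A>1+\sqrt{1+\gamma}$ forces the strict inequality $A\bigl(\tfrac{A+\gamma}{A-1}\bigr)>(1+\sqrt{1+\gamma})^{2}$. If needed, one can verify this from scratch by checking that $g(t):=t(t+\gamma)/(t-1)$ is strictly increasing on $(1+\sqrt{1+\gamma},\infty)$—its derivative has numerator $t^{2}-2t-\gamma$, which is positive there—and satisfies $g(1+\sqrt{1+\gamma})=(1+\sqrt{1+\gamma})^{2}$. Dividing the three-term decomposition by $A^{m}\bigl(\tfrac{A+\gamma}{A-1}\bigr)^{m}$, the $\mathfrak{p}$-contribution tends to $1$, while the $\mathfrak{q}$ and $\mathfrak{r}$ contributions are at most $C/\sqrt{m}$ times a geometric factor with ratio strictly less than $1$, hence vanish in the limit. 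Passing to the limit yields $f_{m,m}\sim A^{m}\bigl(\tfrac{A+\gamma}{A-1}\bigr)^{m}$.

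The main thing to be careful about is not an obstacle so much as a verification: making sure that the inequality comparing the two exponential bases is strict (so a $1/\sqrt{m}$ prefactor cannot compensate) and that the boundary situations $B=0$ or $Q=0$ are harmless, since a vanishing summand is trivially dominated. Both concerns are addressed by the monotonicity of $g$ and by the observation that dominated convergence of the three summands in the decomposition is the only ingredient the argument actually uses.
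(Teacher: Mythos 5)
Your proposal is correct and follows essentially the same route as the paper: decompose $f_{m,m}=\mathfrak{p}_{m,m}+\mathfrak{q}_{m,m}+\mathfrak{r}_{m,m}$ via Lemma~\ref{lem:fpqr}, use the earlier result $\mathfrak{p}_{m,m}\sim A^m\bigl(\tfrac{A+\gamma}{A-1}\bigr)^m$ for $A>1+\sqrt{1+\gamma}$ together with Proposition~\ref{prop:PQR} for $\mathfrak{q}$ and $\mathfrak{r}$, and conclude by dividing through using $A\bigl(\tfrac{A+\gamma}{A-1}\bigr)>(1+\sqrt{1+\gamma})^2$. Your explicit monotonicity check of $g(t)=t(t+\gamma)/(t-1)$ and the remark on the degenerate case $B=0$ are harmless additions to what the paper leaves implicit.
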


\begin{proof}
Let us analyze only the first case, the second is identical.
Lemma~\ref{lem:fpqr} implies that $f_{m,m}=\mathfrak{p}_{m,m}+\mathfrak{q}_{m,m}+ \mathfrak{r}_{m,m}$; 
moreover, using Proposition~\ref{prop:PQR} we have
\begin{equation*}
\mathfrak{p}_{m,m} \sim A^m \left(\frac{A+\gamma}{A-1}\right)^m, 
\qquad
\mathfrak{q}_{m,m} \sim \frac{Q}{\sqrt{m}} \left(1+\sqrt{1+\gamma}\right)^{2 m}, 
\qquad
\mathfrak{r}_{m,m} \sim \frac{R}{\sqrt{m}} \left(1+\sqrt{1+\gamma}\right)^{2 m}.
\end{equation*}
Thus, it is enough to divide $f_{m,m} = \mathfrak{p}_{m,m} + \mathfrak{q}_{m,m} + \mathfrak{r}_{m,m}$ by $A^m \left(\frac{A+\gamma}{A-1}\right)^m$ and take limits when $m \to \infty$, taking into account that
$A \left(\frac{A+\gamma}{A-1}\right) > \left(1+ \sqrt{1+\gamma}\right)^2$.
\end{proof}

We can summarize the previous propositions in the following result, where, without loss of generality, we are assuming $A\leq B$:

\begin{theorem} 
\label{th:diagonalab1}
Let $0\leq A\leq B$, $\gamma>0$ and $f_{m,n}$ defined as in \eqref{eq:recuab1}. Then,
\[ 
f_{m,m} \sim
\begin{cases}
 \frac{K}{\sqrt{m}} \left(1+\sqrt{1+\gamma}\right)^{2m} , 
 & \text{if\, $A\leq B< 1+\sqrt{1+\gamma}$}, \\
  B^m \left( \frac{B+\gamma}{B-1}\right)^m , 
 & \text{if\, $1+\sqrt{1+\gamma} \leq B$}, \\
  2 B^m \left( \frac{B+\gamma}{B-1}\right)^m , 
 & \text{if\, $1+\sqrt{1+\gamma} < A=B$}.
\end{cases}
\]
\end{theorem}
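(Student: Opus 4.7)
The plan is to assemble the three cases of Theorem~\ref{th:diagonalab1} by \emph{combining} the preceding propositions and corollaries, rather than performing any fresh singularity analysis. The core decomposition $f_{m,n}=\mathfrak{p}_{m,n}+\mathfrak{q}_{m,n}+\mathfrak{r}_{m,n}$ from Lemma~\ref{lem:fpqr} already does all the heavy lifting, and what remains is to identify which of these three summands dominates in each regime.

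First, I would dispatch the ``small'' regime $A\le B<1+\sqrt{1+\gamma}$ by invoking Proposition~\ref{prop:principal} directly: here the smooth critical point $(\overline{x},\overline{y})$ lies inside the domain of analyticity of the generating function in Lemma~\ref{lem:generatriz}, so the PW method gives $f_{m,m}\sim\tfrac{K}{\sqrt{m}}(1+\sqrt{1+\gamma})^{2m}$ without any extra work. Next, for the ``equal-large'' case $1+\sqrt{1+\gamma}<A=B$, I would simply quote the immediately preceding corollary: applying Lemma~\ref{lem:fpqr} and Proposition~\ref{prop:PQR} one sees that $\mathfrak{p}_{m,m}$ and $\mathfrak{q}_{m,m}$ contribute the same dominant term $A^m(\tfrac{A+\gamma}{A-1})^m$, while $\mathfrak{r}_{m,m}$ is exponentially negligible because $A\cdot\tfrac{A+\gamma}{A-1}>(1+\sqrt{1+\gamma})^2$ whenever $A>1+\sqrt{1+\gamma}$. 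The factor of $2$ in the third clause is precisely this duplication.

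The middle clause $1+\sqrt{1+\gamma}\le B$, which must be read with the implicit $A<B$ to avoid overlap with the third case, is the one requiring a bit of sorting. I would split according to whether $A$ lies below or above the threshold. If $A<1+\sqrt{1+\gamma}<B$, then the preceding ``mixed regime'' proposition yields $\mathfrak{q}_{m,m}\sim B^m(\tfrac{B+\gamma}{B-1})^m$, while $\mathfrak{p}_{m,m}$ and $\mathfrak{r}_{m,m}$ are both of order $(1+\sqrt{1+\gamma})^{2m}/\sqrt{m}$; if $1+\sqrt{1+\gamma}<A<B$, Corollary~\ref{cor:ABgrandes} gives the same conclusion, since the map $x\mapsto x(x+\gamma)/(x-1)$ is strictly increasing on $(1+\sqrt{1+\gamma},+\infty)$ and hence the $B$-rate strictly exceeds the $A$-rate. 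In either sub-case the $B$-contribution dominates and $f_{m,m}\sim B^m(\tfrac{B+\gamma}{B-1})^m$ follows by dividing the identity of Lemma~\ref{lem:fpqr} by $B^m(\tfrac{B+\gamma}{B-1})^m$ and letting $m\to\infty$.

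The main subtlety I anticipate is the boundary value $A=1+\sqrt{1+\gamma}$, which is not strictly covered by the cited propositions since they are phrased with strict inequalities. At this boundary the two candidate exponential rates $A\cdot\tfrac{A+\gamma}{A-1}$ and $(1+\sqrt{1+\gamma})^2$ coincide, so $\mathfrak{p}_{m,m}$ remains at most polynomially comparable to $B^m(\tfrac{B+\gamma}{B-1})^m$ and is still dominated by it; a short continuity argument, or a direct application of the PW method to $f_{\mathfrak{p}}$ in this limiting case, seals the gap. Apart from this minor bookkeeping, the proof is a direct aggregation of results already established, with no new computation required.
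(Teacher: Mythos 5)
Your route is the same one the paper takes: Theorem~\ref{th:diagonalab1} is assembled from Proposition~\ref{prop:principal} (the case $A\le B<1+\sqrt{1+\gamma}$), the decomposition of Lemma~\ref{lem:fpqr} together with Proposition~\ref{prop:PQR}, the mixed-regime proposition, Corollary~\ref{cor:ABgrandes}, and the $A=B$ corollary; your treatment of all strict-inequality regimes is correct (your monotonicity observation for $x\mapsto x(x+\gamma)/(x-1)$ above the threshold replaces the paper's inequality $AB>A+B+\gamma$, which is fine). Your continuity/squeeze repair for the boundary $A=1+\sqrt{1+\gamma}<B$ is also essentially the paper's own $\eps$-argument, and it works there because the term at the threshold is non-dominant and both brackets share the same leading asymptotic $B^m\left(\frac{B+\gamma}{B-1}\right)^m$ with the same constant.

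The genuine gap is the other boundary, $B=1+\sqrt{1+\gamma}$, which the second clause (``$1+\sqrt{1+\gamma}\le B$'') includes and which your plan never covers: both of your sub-cases of the middle clause assume $B$ strictly above the threshold, and your boundary remark concerns only $A$. For $A<B=1+\sqrt{1+\gamma}$ the claim is $f_{m,m}\sim B^m\left(\frac{B+\gamma}{B-1}\right)^m=(1+\sqrt{1+\gamma})^{2m}$, with constant exactly $1$ and no $1/\sqrt{m}$ factor, and here the \emph{dominant} summand is $\mathfrak{q}_{m,m}$, whose parameter sits exactly at the threshold. Neither of your suggested repairs works in this situation: smooth-point PW does not apply to $f_{\mathfrak{q}}$ because the pole coming from the factor $1-By$ passes through the critical point, so the singular variety has a non-smooth (multiple) point there; and a monotonicity squeeze in $B$ only traps $f_{m,m}$ between $\frac{K_{-\eps}}{\sqrt{m}}(1+\sqrt{1+\gamma})^{2m}$ and $\left(B_{\eps}\frac{B_{\eps}+\gamma}{B_{\eps}-1}\right)^m$, which differ from the target by a polynomial, respectively an exponential, factor and hence cannot identify the constant (unlike the $A$-boundary, where both brackets agree). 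The paper fills this in, after the theorem, by an explicit inductive computation showing that the sequence with parameter at the threshold satisfies $\mathfrak{p}_{m,n}\sim(1+\sqrt{1+\gamma})^{m+n}$ (the proposition for $A=1+\sqrt{1+\gamma}$, $B<1+\sqrt{1+\gamma}$, used here with the roles of $A$ and $B$ exchanged), while the remaining corner $A=B=1+\sqrt{1+\gamma}$ is settled exactly by Proposition~\ref{prop:AmBn}. Adding these two boundary arguments (or an equivalent multiple-point analysis) is what your proposal still needs to cover the second clause as stated.
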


Now, let us analyze the cases $A = 1+\sqrt{1+\gamma}$ or $B = 1+\sqrt{1+\gamma}$. Notice that he case $A=B=1+\sqrt{1+\gamma}$ is already included in Proposition~\ref{prop:AmBn}.

\begin{corollary} 
If $A=B=1+\sqrt{1+\gamma}$ then $f_{m,n}= (1+ \sqrt{1+\gamma})^{m+n}$ and, in particular, $f_{m,m}=\left(1+\sqrt{1+\gamma}\right)^{2m}$.
\end{corollary}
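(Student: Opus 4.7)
The plan is to simply invoke Proposition~\ref{prop:AmBn} once we verify its algebraic hypothesis for the specific value $A=B=1+\sqrt{1+\gamma}$. Recall that proposition says the diagonal/uniform pattern $f_{m,n}=A^{m}B^{n}$ holds throughout $\mathbb{Z}_{\ge 0}^{2}$ if and only if $AB=A+B+\gamma$.

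First I would compute both sides of this identity for the given values. Setting $s:=\sqrt{1+\gamma}$, so $A=B=1+s$, the left side becomes $(1+s)^{2}=1+2s+s^{2}=1+2s+(1+\gamma)=2+2s+\gamma$, while the right side is $2(1+s)+\gamma=2+2s+\gamma$. The two coincide, so the hypothesis of Proposition~\ref{prop:AmBn} is satisfied.

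Applying Proposition~\ref{prop:AmBn}, we conclude $f_{m,n}=A^{m}B^{n}=(1+\sqrt{1+\gamma})^{m}(1+\sqrt{1+\gamma})^{n}=(1+\sqrt{1+\gamma})^{m+n}$ for every $m,n\ge 0$. Specializing to $m=n$ yields $f_{m,m}=(1+\sqrt{1+\gamma})^{2m}$, as claimed.

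There is essentially no obstacle here: the entire proof is a one-line algebraic verification that reduces the statement to the characterization already proved. The only mild care needed is to make sure we are quoting the right normalization, namely the $\alpha=\beta=1$ case of the recurrence~\eqref{eq:recuab1}, which matches the hypotheses under which Proposition~\ref{prop:AmBn} was stated.
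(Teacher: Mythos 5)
Your proof is correct and is exactly the paper's argument: verify the identity $\left(1+\sqrt{1+\gamma}\right)^{2}=2\left(1+\sqrt{1+\gamma}\right)+\gamma$, i.e.\ $AB=A+B+\gamma$, and then apply Proposition~\ref{prop:AmBn}. Nothing further is needed.
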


\begin{proof}
Use $\left(1+\sqrt{1+\gamma}\right)^2 = 2 (1+\sqrt{1+\gamma})+\gamma$ and apply Proposition~\ref{prop:AmBn}.
\end{proof}

\begin{remark} 
If $A= 1+\sqrt{1+\gamma}$ then $\frac{A+\gamma}{A-1}=A$ so $A\frac{A+\gamma}{A-1}= A^2=2 +\gamma + 2 (1+\sqrt{1+\gamma})$. Of course, the same can be said for~$B$.
\end{remark}

\begin{proposition} 
If $A = 1+\sqrt{1+\gamma}$ and $B> 1+\sqrt{1+\gamma}$, 
then $f_{m,m} \sim B^m \left(\frac{B+\gamma}{B-1}\right)^m$.
\end{proposition}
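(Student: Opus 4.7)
The plan is to decompose $f_{m,m} = \mathfrak{p}_{m,m} + \mathfrak{q}_{m,m} + \mathfrak{r}_{m,m}$ via Lemma~\ref{lem:fpqr} and show that $\mathfrak{q}_{m,m}$ is the dominant term. Since $B > 1+\sqrt{1+\gamma}$ falls in the regime already treated, the corresponding proposition gives $\mathfrak{q}_{m,m} \sim B^m \big(\tfrac{B+\gamma}{B-1}\big)^m$. Set $h(A) := A(A+\gamma)/(A-1)$; then $h(B) > (1+\sqrt{1+\gamma})^2$, so the estimate $\mathfrak{r}_{m,m} \sim \tfrac{R}{\sqrt{m}}(1+\sqrt{1+\gamma})^{2m}$ from Proposition~\ref{prop:PQR} is automatically $o\big(B^m\big(\tfrac{B+\gamma}{B-1}\big)^m\big)$ and $\mathfrak{r}$ contributes nothing to the asymptotic.

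The delicate part is bounding $\mathfrak{p}_{m,m}$: the hypothesis $A = 1+\sqrt{1+\gamma}$ sits exactly on the boundary between the two regimes for which we have asymptotics for $\mathfrak{p}$, so neither of those propositions applies directly. A head-on PW analysis would have to handle a critical point colliding with the pole $x = 1/A$ of $f_\mathfrak{p}(x,y) = \tfrac{Ax(1-x)}{(1-Ax)(1-x-y-\gamma xy)}$, i.e., a multiple-point singularity requiring more delicate machinery. To sidestep this, I would use a monotonicity argument: pick any $A'$ with $A < A' < B$ and let $\mathfrak{p}'_{m,n}$ be $\mathfrak{p}$ recomputed with $A$ replaced by $A'$. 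Since the recurrence has non-negative coefficients and the only change in the initial data is $\mathfrak{p}_{m,0} = A^m \leq (A')^m = \mathfrak{p}'_{m,0}$, a straightforward induction yields $\mathfrak{p}_{m,n} \leq \mathfrak{p}'_{m,n}$ for all $m,n \geq 0$.

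Because $A' > 1+\sqrt{1+\gamma}$, the proposition immediately preceding Corollary~\ref{cor:ABgrandes} applies to $\mathfrak{p}'$ and gives $\mathfrak{p}'_{m,m} \sim (A')^m\big(\tfrac{A'+\gamma}{A'-1}\big)^m$. A short calculation gives $h'(A) = (A^2-2A-\gamma)/(A-1)^2$, which is strictly positive on $(1+\sqrt{1+\gamma},\infty)$; hence $h$ is strictly increasing there and $A' < B$ forces $h(A') < h(B)$, so $(A')^m\big(\tfrac{A'+\gamma}{A'-1}\big)^m = o\big(B^m\big(\tfrac{B+\gamma}{B-1}\big)^m\big)$. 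Therefore $\mathfrak{p}_{m,m} = o\big(B^m\big(\tfrac{B+\gamma}{B-1}\big)^m\big)$, and dividing $f_{m,m} = \mathfrak{p}_{m,m} + \mathfrak{q}_{m,m} + \mathfrak{r}_{m,m}$ by $B^m\big(\tfrac{B+\gamma}{B-1}\big)^m$ and letting $m \to \infty$ yields the claim. The main obstacle---a precise critical-case asymptotic for $\mathfrak{p}_{m,m}$ itself---is bypassed precisely because an upper bound of the correct exponential order suffices, which the monotonicity plus the $A > 1+\sqrt{1+\gamma}$ proposition deliver for free.
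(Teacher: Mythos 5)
Your proof is correct, but it takes a different route from the paper's. The paper does not return to the $\mathfrak{p},\mathfrak{q},\mathfrak{r}$ decomposition here: it perturbs the boundary parameter, setting $A^{\varepsilon}=1+\sqrt{1+\gamma}+\varepsilon$ and $A^{-\varepsilon}=1+\sqrt{1+\gamma}-\varepsilon$ with $B>A^{\varepsilon}$, applies Theorem~\ref{th:diagonalab1} to the two perturbed sequences $f^{\pm\varepsilon}_{m,n}$ (both yielding the same asymptotic $B^m\left(\frac{B+\gamma}{B-1}\right)^m$, independent of $\varepsilon$), and concludes by ``letting $\varepsilon\to0$'' --- in effect a two-sided squeeze of the full sequence $f_{m,m}$, which tacitly relies on exactly the monotonicity-in-the-initial-data induction that you state explicitly ($f^{-\varepsilon}_{m,m}\le f_{m,m}\le f^{\varepsilon}_{m,m}$ because the recurrence has non-negative coefficients). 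Your argument instead keeps the decomposition of Lemma~\ref{lem:fpqr}, uses the already-proved asymptotics for $\mathfrak{q}_{m,m}$ and $\mathfrak{r}_{m,m}$ (Proposition~\ref{prop:PQR} and the $B>1+\sqrt{1+\gamma}$ result), and handles the boundary-case component $\mathfrak{p}$ with only a one-sided comparison against $\mathfrak{p}'$ built from some $A'\in\bigl(1+\sqrt{1+\gamma},B\bigr)$, together with the strict monotonicity of $h(t)=t(t+\gamma)/(t-1)$ on $\bigl(1+\sqrt{1+\gamma},\infty\bigr)$; all of these steps check out ($h'(t)=(t^2-2t-\gamma)/(t-1)^2>0$ there, and $0\le\mathfrak{p}_{m,m}\le\mathfrak{p}'_{m,m}$ suffices since only an upper bound of smaller exponential order is needed). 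What each approach buys: the paper's is shorter but leaves the comparison step implicit, whereas yours makes the monotonicity explicit and isolates the single delicate term, at the cost of re-invoking the component machinery. One small slip in citation: the asymptotic you need for $\mathfrak{p}'$, namely $\mathfrak{p}'_{m,m}\sim (A')^m\left(\frac{A'+\gamma}{A'-1}\right)^m$ for $A'>1+\sqrt{1+\gamma}$, is the proposition two results before Corollary~\ref{cor:ABgrandes} (the one immediately preceding that corollary is the analogous statement for $\mathfrak{q}$); this does not affect the argument.
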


\begin{proof} 
Becase $B > 1+\sqrt{1+\gamma}$, we can choose $\eps>0$ such that $B > 1+\sqrt{1+\gamma}+\eps$ and $\sqrt{1+\gamma}-\eps \geq 0$. Now, let us denote $A^{\eps} = 1+\sqrt{1+\gamma}+\eps$ and $A^{-\eps} = 1+\sqrt{1+\gamma}-\eps$, and take $f_{m,n}^{\eps}$, $f^{-\eps}_{m,n}$ the corresponding sequences defined using $A^{\eps}$ and~$A^{-\eps}$.

Because $B > A^{\eps} > 1+\sqrt{1+\gamma}$, Theorem~\ref{th:diagonalab1} show that $f_{m,n}^{\eps} \sim B^m (\frac{B+\gamma}{B-1})^m $, that does not depend on $\eps$. On the other hand, using $B > 1+\sqrt{1+\gamma} > A^{-\eps}$, Theorem~\ref{th:diagonalab1} again gives $f^{-\eps}_{m,n} \sim B^m (\frac{B+\gamma}{B-1})^m$, also independent on~$\eps$. New, let us take $\eps \to 0$.
\end{proof}

\begin{proposition} 
If $B= 1+\sqrt{1+\gamma}$ and $A> 1+\sqrt{1+\gamma}$, 
then $f_{m,m} \sim A^m \left(\frac{A+\gamma}{A-1}\right)^m$.
\end{proposition}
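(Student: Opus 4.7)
The proof is the mirror image of the preceding proposition, with the roles of $A$ and $B$ swapped. Since $A > 1+\sqrt{1+\gamma} = B$, I would first pick $\eps > 0$ small enough that $B^{\eps} := 1+\sqrt{1+\gamma}+\eps < A$ and $B^{-\eps} := 1+\sqrt{1+\gamma}-\eps \geq 0$, and denote by $f^{\eps}_{m,n}$ and $f^{-\eps}_{m,n}$ the sequences defined by \eqref{eq:recuab1} with $B$ replaced by $B^{\eps}$ and $B^{-\eps}$, respectively.

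The next step is to slot each perturbed sequence into a regime that has already been handled. For the upper perturbation, the chain $1+\sqrt{1+\gamma} < B^{\eps} < A$ places us in the hypothesis of Corollary~\ref{cor:ABgrandes} (specifically its $A > B$ branch), yielding
\[
f^{\eps}_{m,m} \sim A^m \left(\frac{A+\gamma}{A-1}\right)^m.
\]
For the lower perturbation, $B^{-\eps} < 1+\sqrt{1+\gamma} < A$ falls under the proposition stated immediately above the present one (the ``$B<1+\sqrt{1+\gamma}<A$'' case), producing the same asymptotic. Crucially, neither expression depends on $\eps$.

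To close the argument, I would invoke monotonicity in $B$: because $\alpha=\beta=1$, $\gamma \geq 0$, $A \geq 0$, and the only way $B$ enters \eqref{eq:recuab1} is through the initial values $f_{0,n}=B^n$, an easy induction on $m+n$ shows that $f_{m,n}$ is non-decreasing in $B \geq 0$. Consequently $f^{-\eps}_{m,m} \leq f_{m,m} \leq f^{\eps}_{m,m}$, and the common asymptotic of the two flanking sequences squeezes $f_{m,m}$ to the claimed value. The only real work is in verifying the elementary inequalities that pin each perturbed sequence to the correct regime of the earlier results; no new analytic ingredients are needed beyond those already used in the sibling proposition.
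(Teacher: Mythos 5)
Your argument is correct and is essentially the paper's own proof: the paper handles this case exactly as the preceding proposition with the roles of $A$ and $B$ exchanged, i.e., it perturbs the boundary parameter to $1+\sqrt{1+\gamma}\pm\eps$ and invokes the already-established regimes (the $A>B>1+\sqrt{1+\gamma}$ case and the $B<1+\sqrt{1+\gamma}<A$ case), whose common asymptotic $A^m\bigl(\frac{A+\gamma}{A-1}\bigr)^m$ does not depend on $\eps$. The only difference is that you spell out the monotonicity-in-$B$ squeeze (which in fact works for any fixed admissible $\eps$, no limit $\eps\to 0$ needed), a step the paper leaves implicit in its ``take $\eps\to 0$'' conclusion, so this is a clarification rather than a different route.
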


\begin{proof} 
Proceed as in the previous proposition, changing the roles of $A$ and $B$.
\end{proof}

\begin{proposition} 
If $A= 1+\sqrt{1+\gamma}$ and $B< 1+\sqrt{1+\gamma}$ or vice versa, then 
\[
f_{m,m} \sim (1+\sqrt{1+\gamma}) ^{2 m}.
\]
\end{proposition}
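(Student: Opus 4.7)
The strategy is the same as in the two preceding propositions: sandwich $f_{m,m}$ between the sequences obtained by perturbing $A$ slightly upward and downward, invoke Theorem~\ref{th:diagonalab1} on each side, and let the perturbation go to zero. By the $A\leftrightarrow B$ symmetry of the setup it is enough to treat the case $A=1+\sqrt{1+\gamma}$ and $B<1+\sqrt{1+\gamma}$.

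Pick $\eps>0$ small enough that $1+\sqrt{1+\gamma}-\eps>B$ and $\sqrt{1+\gamma}-\eps\geq 0$, set $A^{\pm\eps}=1+\sqrt{1+\gamma}\pm\eps$, and denote by $f^{\pm\eps}_{m,n}$ the sequence defined by \eqref{eq:recuab1} with $A$ replaced by $A^{\pm\eps}$. Because all parameters (including $A^{\pm\eps}$) are non-negative and both the initial data $A^m B^n$ and the recursion are monotone in $A$, a straightforward induction on $m+n$ yields the sandwich
\[
f^{-\eps}_{m,m}\leq f_{m,m}\leq f^{\eps}_{m,m}.
\]

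Now apply Theorem~\ref{th:diagonalab1}. Since $A^{-\eps},B<1+\sqrt{1+\gamma}$ its first case gives $f^{-\eps}_{m,m}\sim (K^{-\eps}/\sqrt{m})(1+\sqrt{1+\gamma})^{2m}$, while $B<1+\sqrt{1+\gamma}<A^{\eps}$ puts us in its second case and gives $f^{\eps}_{m,m}\sim (A^{\eps})^m\bigl((A^{\eps}+\gamma)/(A^{\eps}-1)\bigr)^m$. The crucial ingredient is the boundary identity $(A+\gamma)/(A-1)=A$ recorded in the preceding remark, which by continuity forces $A^{\eps}\cdot(A^{\eps}+\gamma)/(A^{\eps}-1)\to A^{2}=(1+\sqrt{1+\gamma})^{2}$ as $\eps\downarrow 0$. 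Letting $\eps\to 0$ in the sandwich then yields the stated asymptotic.

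The main subtlety, compared with the proofs of the two previous propositions, is that the upper and lower bracketing asymptotics do not coincide for any fixed $\eps>0$: the upper bound grows at an exponential rate strictly larger than $(1+\sqrt{1+\gamma})^{2}$, while the lower one grows at the correct exponential rate but is dressed with an extra $1/\sqrt{m}$ factor (and, incidentally, its constant $K^{-\eps}$ is expected to blow up in the limit). It is only in the limit $\eps\to 0$, by virtue of the boundary identity from the remark above, that the two sides collapse to the common growth rate $(1+\sqrt{1+\gamma})^{2}$ recorded by the conclusion.
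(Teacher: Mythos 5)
Your sandwich argument does not prove the stated asymptotic equivalence; it only pins down the exponential growth rate. The assertion $f_{m,m}\sim(1+\sqrt{1+\gamma})^{2m}$ means that the ratio $f_{m,m}/(1+\sqrt{1+\gamma})^{2m}$ tends to $1$. For any \emph{fixed} $\eps>0$, your two brackets say nothing about this ratio: the lower bound $f^{-\eps}_{m,m}\sim\frac{K^{-\eps}}{\sqrt m}(1+\sqrt{1+\gamma})^{2m}$ only yields $\liminf_m f_{m,m}/(1+\sqrt{1+\gamma})^{2m}\ge 0$ (because of the $1/\sqrt m$), while the upper bound $f^{\eps}_{m,m}\sim\bigl(A^{\eps}\tfrac{A^{\eps}+\gamma}{A^{\eps}-1}\bigr)^m$ has exponential rate strictly larger than $(1+\sqrt{1+\gamma})^2$ (the boundary point is the \emph{minimum} of $A\mapsto A\tfrac{A+\gamma}{A-1}$), so it only yields $\limsup_m f_{m,m}/(1+\sqrt{1+\gamma})^{2m}\le\infty$. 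You cannot then "let $\eps\to0$": the inner limit in $m$ has already been taken and is degenerate, and you provide no uniformity in $\eps$ that would justify interchanging the limits. What the sandwich does give is $\lim_m f_{m,m}^{1/m}=(1+\sqrt{1+\gamma})^2$, but that is strictly weaker: it cannot exclude, say, $f_{m,m}\sim C(1+\sqrt{1+\gamma})^{2m}/\sqrt m$ or $f_{m,m}\sim 2(1+\sqrt{1+\gamma})^{2m}$. This is exactly why the perturbation trick worked in the previous propositions (there both brackets had the \emph{same} $\eps$-independent asymptotics $B^m\bigl(\tfrac{B+\gamma}{B-1}\bigr)^m$) but fails here.

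The paper instead returns to the decomposition $f_{m,m}=\mathfrak p_{m,m}+\mathfrak q_{m,m}+\mathfrak r_{m,m}$ of Lemma~\ref{lem:fpqr}: by Proposition~\ref{prop:PQR}, $\mathfrak q_{m,m}$ and $\mathfrak r_{m,m}$ are both $O\bigl((1+\sqrt{1+\gamma})^{2m}/\sqrt m\bigr)$, and for the boundary value $A=1+\sqrt{1+\gamma}$ one shows directly (explicit computation for $n=0,1$ and induction on $n$) that $\mathfrak p_{m,n}\sim(1+\sqrt{1+\gamma})^{m+n}$, so the diagonal of $\mathfrak p$ supplies the exact main term with constant $1$ and the other two pieces are negligible. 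To repair your proof you would need an argument of this kind (or some genuinely uniform-in-$\eps$ control); the limit interchange as written is the gap.
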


\begin{proof}
By Lemma~\ref{lem:fpqr}, $f_{m,m}=\mathfrak{p}_{m,m}+ \mathfrak{q}_{m,m}+ \mathfrak{r}_{m,m}$, and, by Proposition~\ref{prop:PQR}, $\mathfrak{q}_{m,m} \sim \frac{Q}{\sqrt{m}} (1+\sqrt{1+\gamma})^{2 m}$ and $\mathfrak{r}_{m,m} \sim \frac{R}{\sqrt{m}} (1+\sqrt{1+\gamma})^{2 m}$. Now
\[
\mathfrak{p}_{m,n} =
\begin{cases}
0 , & \text{if $m =0$,} \\
(1+\sqrt{1+\gamma})^m , 
& \text{if $m\geq1, n=0$,} \\
\mathfrak{p}_{m-1,n} + \mathfrak{p}_{m,n-1}+ \gamma \mathfrak{p}_{m-1,n-1}, 
& \text{if $mn>0$,}
\end{cases}
\]
and we want to study its asymptotic behavior. We will see that $\mathfrak{p}_{m,n} \sim (1+\sqrt{1+\gamma})^{m+n}$ and, from this, the result follows easily.

If $n = 0$, it is evident from the definition of $\mathfrak{p}_{m,n}$.
If $n = 1$, it is easy to see inductively that
\begin{align*} 
\mathfrak{p}_{m,1} &= \mathfrak{p}_{m,0} + (1+\gamma) \sum_{i=2}^{m-1} \mathfrak{p}_{i,0}
+ \mathfrak{p}_{1,0} \\
&= {(1+\sqrt{1+\gamma})}^m + (1+\gamma) {(1+\sqrt{1+\gamma})}^2\,
\frac{(1+\sqrt{1+\gamma}) ^{m-2}-1}{\sqrt{1+\gamma}} + (1+\sqrt{1+\gamma}) \\
&= {(1+\sqrt{1+\gamma})}^{m+1}+ (1+\sqrt{1+\gamma}) - \sqrt{1+\gamma} {(1+\sqrt{1+\gamma})}^2.
\end{align*}
To conclude, it is enough to reason by induction on $n$.
\end{proof}

To finish we have to calculate the value of the constant $K$ in Theorem~\ref{th:diagonalab1}. For this we use the Meltzer methodology, see \cite[Th.~54]{Mel2017}; although there it is a rational function, the truth is that it is still valid for quotients of holomorphic functions in an environment of the origin (see \cite[Th.~3.5]{PeWi2002}). 

To compute $K$, let's remember that we are in the case $A,B<1+ \sqrt{1+\gamma}$ and that, in this case, the generating function is given by
\[ 
f(x,y) = \frac{\frac{1-x-y+(A+B-AB)xy}{(1-Ax)(1-By)}}{1-x-y+\gamma x y} =: \frac{I(x,y)}{J(x,y)},
\]
where both $I$ and $J$ are holomorphic functions in an environment of the origin. Furthermore, the only strictly minimal point, that is also isolated, smooth and non-degenerate, is
\[ 
(\omega_1,\omega_2) = \left( \frac{1}{1+\sqrt{1+\gamma}}, \frac{1}{1+\sqrt{1+\gamma}}\right).
\]

In this context, we have (see \cite[Th.~54]{Mel2017}, that gives precise values for the constants) 
\[
K = \frac{1}{\sqrt{2 \pi}}\frac{1}{\sqrt{\mathcal{H}}} \,C_0,
\]
where 
\[
C_0 = \frac{-I(\omega_1,\omega_2)}{\omega_2 J_y(\omega_1,\omega_2)} 
\quad\text{ and }\quad
\mathcal{H} = 2+ \lambda^{-1}(U_{1,1}-2U_{1,2}+U_{2,2}),
\]
with
$\lambda = \omega_2 J_y(\omega_1,\omega_2) = \omega_1 I_x(\omega_1,\omega_2)$ and
\[ 
U_{1,1} = \omega_1^2J_{xx} (\omega_1,\omega_2), 
\quad
U_{1,2} = \omega_1 \omega_2 J_{xy} (\omega_1,\omega_2),
\quad
U_{2,2} = \omega_2^2J_{yy} (\omega_1,\omega_2).
\]
With these ingredients, we are able to calculate the constant $K$ in Theorem~\ref{th:diagonalab1}.

\begin{proposition} 
The constant $K$ in Theorem~\ref{th:diagonalab1} is
\[
K = \frac{1}{2 \sqrt{\pi }}\frac{\gamma ( A+B- AB+\gamma )}{\sqrt[4]{\gamma +1} 
\left(A B \sqrt{\gamma +1} - AB + \gamma \left(-A-B+\sqrt{\gamma +1}+1\right)\right)}.
\]
\end{proposition}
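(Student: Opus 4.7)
The plan is a direct application of the formula stated just before the proposition: one plugs into the definitions of $C_0$ and $\mathcal{H}$ and simplifies, exploiting the identity $\sqrt{1+\gamma}^{\,2}=1+\gamma$ repeatedly. Writing $s:=\sqrt{1+\gamma}$, so that $\gamma=s^2-1$ and $\omega_1=\omega_2=\frac{1}{1+s}$, turns the whole calculation into elementary algebra in the variables $A,B,s$.

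First I would compute the quantities attached to the denominator $J(x,y)=1-x-y-\gamma xy$. Its partial derivatives are $J_x=-1-\gamma y$, $J_y=-1-\gamma x$, $J_{xx}=J_{yy}=0$, $J_{xy}=-\gamma$. Evaluated at $(\omega_1,\omega_2)$ one obtains, using $\gamma/(1+s)=s-1$, the clean identity
\[
J_y(\omega_1,\omega_2)=-1-\tfrac{\gamma}{1+s}=-s,
\qquad \lambda=\omega_2J_y(\omega_1,\omega_2)=-\tfrac{s}{1+s}.
\]
Since $U_{1,1}=U_{2,2}=0$ and $U_{1,2}=-\gamma/(1+s)^2$, the Hessian-type quantity simplifies to
\[
\mathcal{H}=2-\tfrac{1+s}{s}\cdot\tfrac{-2(-\gamma)}{(1+s)^2}
=2-\tfrac{2\gamma}{s(1+s)}
=\tfrac{2s(1+s)-2\gamma}{s(1+s)}
=\tfrac{2}{s},
\]
where the final equality uses $s^2-\gamma=1$.

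Next I would evaluate the numerator. At $(\omega_1,\omega_2)$ the factor $1-2\omega_1+(A+B-AB)\omega_1^2$ equals $\frac{s^2-1+A+B-AB}{(1+s)^2}=\frac{\gamma+A+B-AB}{(1+s)^2}$, while $(1-A\omega_1)(1-B\omega_1)=\frac{(1+s-A)(1+s-B)}{(1+s)^2}$. Therefore
\[
I(\omega_1,\omega_2)=\frac{\gamma+A+B-AB}{(1+s-A)(1+s-B)},
\qquad
C_0=\frac{-I(\omega_1,\omega_2)}{\lambda}=\frac{(1+s)\,(\gamma+A+B-AB)}{s\,(1+s-A)(1+s-B)}.
\]
Assembling the pieces yields
\[
K=\frac{1}{\sqrt{2\pi}}\sqrt{\tfrac{s}{2}}\;C_0
=\frac{1}{2\sqrt{\pi}\,\sqrt{s}}\cdot\frac{(1+s)\,(\gamma+A+B-AB)}{(1+s-A)(1+s-B)}.
\]

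The main (and only) obstacle is to recognise that this expression coincides with the stated one. For that I would verify the algebraic identity
\[
(1+s)\bigl[ABs-AB+\gamma(-A-B+s+1)\bigr]=\gamma\,(1+s-A)(1+s-B),
\]
which follows at once by expanding the right-hand side as $\gamma[(1+s)^2-(A+B)(1+s)+AB]$ and then using $(1+s)AB(s-1)=AB\gamma$ (because $(s-1)(1+s)=\gamma$) together with $(1+s)\gamma(s+1)=\gamma(1+s)^2$. Multiplying numerator and denominator of the formula for $K$ by $\gamma$ and applying this identity converts $(1+s)(\gamma+A+B-AB)/[(1+s-A)(1+s-B)]$ into $\gamma(A+B-AB+\gamma)/[ABs-AB+\gamma(-A-B+s+1)]$, and recalling $\sqrt{s}=\sqrt[4]{\gamma+1}$ gives exactly the expression claimed in the proposition.
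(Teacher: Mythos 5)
Your computation is correct and follows essentially the same route as the paper: evaluate $\lambda$, $U_{1,2}$ and hence $\mathcal{H}=2/\sqrt{1+\gamma}$, then $I(\omega_1,\omega_2)$ and $C_0$, and assemble $K=\frac{1}{\sqrt{2\pi}}\frac{1}{\sqrt{\mathcal{H}}}C_0$. Moreover, your value $C_0=\frac{(1+s)(\gamma+A+B-AB)}{s\,(1+s-A)(1+s-B)}$ (with $s=\sqrt{1+\gamma}$) is the correct one — the paper's displayed simplification of $C_0$ omits a factor $\sqrt{1+\gamma}$ in the denominator, though its final formula for $K$ is right — and your explicit check of the closing identity $(1+s)\bigl[ABs-AB+\gamma(-A-B+s+1)\bigr]=\gamma(1+s-A)(1+s-B)$, which the paper leaves implicit, is valid.
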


\begin{proof}
First, let us observe that $J_x = -1-\gamma y$, $J_y = -1-\gamma x$, $J_{xy} = - \gamma$, $J_{xx}=J_{yy}=0$. 
Consequently, $U_{1,1}=U_{2,2}=0$ and $\mathcal{H}=2-\frac{2}{\lambda} U_{1,2}$. 
Moreover,
\[ 
\lambda = \omega_2 J_y(\omega_1,\omega_2)
= \frac{1}{1+\sqrt{1+\gamma}} \left(-1- \frac{\gamma}{1+\sqrt{1+\gamma}}\right),
\]
\[
U_{1,2} = \omega_1 \omega_2 J_{xy}(\omega_1,\omega_2)
= \left( \frac{1}{1+\sqrt{1+\gamma}}\right)^2 \cdot(-\gamma),
\]
and therefore $\mathcal{H} = {2}/{\sqrt{1+\gamma}}$.

On the other hand, $C_0 = -I(\omega_1,\omega_2) \lambda^{-1}$. Since
\[
 I(\omega_1,\omega_2) = \frac{\gamma+A+B-AB}{(\sqrt{1+\gamma}-A+1)(\sqrt{1+\gamma}-B+1)},
\]
it follows that
\[
C_0 = \frac{\gamma (A+B-AB+\gamma)}{(\sqrt{1+\gamma}-1)\cdot(\sqrt{1+\gamma}-A+1)
\cdot (\sqrt{1+\gamma}-B+1)},
\]
and we are done.
\end{proof}

Now, let us forget the condition $\alpha=1=\beta$ what we assumed, without loss of generality, at the beginning of Section~\ref{sec:asympfnn}. To do so, it is enough to replace $A$ by $A/\alpha$, $B$ by $B/\beta$, $\gamma$ by $\frac{\gamma}{\alpha \beta}$ and $f_{m,n}$ by $\alpha^{-m} \beta^{-n} f_{m,n}$ in the previous results. If, in addition, we rewrite $K$ in a more symmetric fashion, we get the following.

\begin{theorem} 
\label{th:diagonal}
Let $A,B \geq 0$ and $\alpha,\beta,\gamma\geq 0$ with $\alpha\beta\neq 0$. If $\{f_{m,n}\}$ is defined as in \eqref{eq:recu}, then
\[
f_{m,m} \sim
\begin{cases}
\frac{K \alpha^m \beta^m}{\sqrt{m}} \left(1+\sqrt{1+\frac{\gamma}{\alpha \beta}}\right)^{2 m} , 
& \text{if\, $\frac{A}{\alpha},\frac{B}{\beta} < 1+\sqrt{1+\frac{\gamma}{\alpha \beta}}$},
\\
B^m \left(\frac{\alpha B+\gamma }{B- \beta }\right)^m , 
& \text{if\, $\frac{A}{\alpha} \leq 1+\sqrt{1+\frac{\gamma}{\alpha \beta}} \leq \frac{B}{\beta}$}, 
\\
B^m \left(\frac{\alpha B+\gamma }{B- \beta }\right)^m , 
& \text{if\, $1+\sqrt{1+\frac{\gamma}{\alpha \beta}} \leq \frac{A}{\alpha} < \frac{B}{\beta}$}, 
\\
A^m \left(\frac{A \beta +\gamma }{A -\alpha}\right)^m , 
& \text{if\, $\frac{B}{\beta} \leq 1+\sqrt{1+\frac{\gamma}{\alpha \beta}} \leq\frac{A}{\alpha}$}, 
\\
A^m \left(\frac{A \beta +\gamma }{A -\alpha}\right)^m , 
& \text{if\, $1+\sqrt{1+\frac{\gamma}{\alpha \beta}} \leq \frac{B}{\beta} < \frac{A}{\alpha}$}, 
\\
2 B^m \left(\frac{\alpha B+\gamma }{B- \beta }\right)^m , 
& \text{if\, $1+\sqrt{1+\frac{\gamma}{\alpha \beta}} < \frac{A}{\alpha} = \frac{B}{\beta}$},
\end{cases}
\]
with 
\[
K = \frac{\gamma (A \beta + B \alpha - AB + \gamma)}
 {2 \sqrt{\pi} \sqrt[4]{\frac{\gamma}{\alpha \beta}+1} 
 \left(AB \alpha\beta \left(\sqrt{\frac{\gamma }{\alpha \beta }+1} - 1 \right) 
 + \alpha\beta\gamma \left(\sqrt{\frac{\gamma }{\alpha \beta }+1} + 1 \right)
 - (A\beta+B\alpha) \gamma \right)}.
\]
\end{theorem}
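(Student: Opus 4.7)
The plan is to reduce the statement to the $\alpha=\beta=1$ case that was treated throughout Section~\ref{sec:asympfnn}, via the very rescaling recalled there. Set $\widehat{A}=A/\alpha$, $\widehat{B}=B/\beta$, $\widehat{\gamma}=\gamma/(\alpha\beta)$ and $\widehat{f}_{m,n}=\alpha^{-m}\beta^{-n}f_{m,n}$. A direct verification (dividing the recurrence~\eqref{eq:recu} by $\alpha^m\beta^n$ in both the interior and the axis cases) shows that $\widehat{f}_{m,n}$ satisfies the normalized recurrence~\eqref{eq:recuab1} with parameters $\widehat{A}$, $\widehat{B}$, $\widehat{\gamma}$, so $f_{m,m}=(\alpha\beta)^m\,\widehat{f}_{m,m}$. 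The task is then to transport each of the six cases of Theorem~\ref{th:diagonalab1}, together with the boundary propositions, back to the original parameters.

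I would next translate the case distinctions and the leading exponentials. The threshold $\widehat{A}<1+\sqrt{1+\widehat{\gamma}}$ is literally $A/\alpha<1+\sqrt{1+\gamma/(\alpha\beta)}$, and similarly for the remaining inequalities, so the list of regimes matches. For the dominant rate, an elementary manipulation gives
\[
(\alpha\beta)\cdot\widehat{B}\cdot\frac{\widehat{B}+\widehat{\gamma}}{\widehat{B}-1}
=\frac{B(\alpha B+\gamma)}{B-\beta},
\]
so multiplying $\widehat{B}^m\bigl(\frac{\widehat{B}+\widehat{\gamma}}{\widehat{B}-1}\bigr)^m$ by $(\alpha\beta)^m$ produces exactly $B^m\bigl(\frac{\alpha B+\gamma}{B-\beta}\bigr)^m$. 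The symmetric identity handles the branch in which $A$ dominates, the extra factor of $2$ in the equal-rate case is preserved because the hypothesis $\widehat{A}=\widehat{B}$ is $A/\alpha=B/\beta$, and the first line of the theorem comes from multiplying $\frac{K}{\sqrt{m}}(1+\sqrt{1+\widehat{\gamma}})^{2m}$ by $(\alpha\beta)^m$.

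The only step requiring genuine effort is the simplification of the constant $K$ into the symmetric form displayed. Starting from the formula for $K$ obtained at the end of the normalized analysis and substituting $A\mapsto A/\alpha$, $B\mapsto B/\beta$, $\gamma\mapsto\gamma/(\alpha\beta)$, I would clear powers of $\alpha\beta$ from both numerator and denominator: after factoring out $1/(\alpha\beta)^2$, the numerator collects to $\gamma(A\beta+B\alpha-AB+\gamma)$, while the three summands in the denominator reorganize into $AB\alpha\beta\bigl(\sqrt{1+\gamma/(\alpha\beta)}-1\bigr)+\alpha\beta\gamma\bigl(\sqrt{1+\gamma/(\alpha\beta)}+1\bigr)-(A\beta+B\alpha)\gamma$. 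This is the main obstacle: routine algebra, but sufficiently intricate that it is safer to verify with a computer algebra system, exactly as the authors advocate in the introduction.
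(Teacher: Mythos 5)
Your proposal is correct and follows essentially the same route as the paper: the authors likewise undo the normalization by replacing $A\mapsto A/\alpha$, $B\mapsto B/\beta$, $\gamma\mapsto\gamma/(\alpha\beta)$ and $f_{m,n}\mapsto\alpha^{-m}\beta^{-n}f_{m,n}$ in Theorem~\ref{th:diagonalab1}, the boundary propositions, and the constant-$K$ proposition, then rewrite $K$ in symmetric form. Your explicit verification of the rate identity $(\alpha\beta)\widehat{B}\bigl(\widehat{B}+\widehat{\gamma}\bigr)/\bigl(\widehat{B}-1\bigr)=B(\alpha B+\gamma)/(B-\beta)$ and of the cancellation of $(\alpha\beta)^2$ in $K$ supplies exactly the computation the paper leaves implicit.
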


As we discussed at the beginning of this section, we have only been considering the case $\alpha\beta\neq 0$. However, the remaining cases may be approached in a similar fashion, and we obtain the following general result.

\begin{theorem}
\label{th:diagonal-0}
Let $A,B,\alpha,\beta,\gamma\ge0$, and $f_{m,n}$ the sequence defined in \eqref{eq:recu}. Then,
\[
f_{m,m} \sim
\begin{cases}
\frac{K}{\sqrt{m}} \left(\sqrt{\alpha \beta}+\sqrt{\alpha\beta+\gamma})\right)^{2 m} , 
& \text{if\, $A\beta,B\alpha < \alpha\beta+\sqrt{\alpha\beta(\alpha\beta+\gamma)}$},
\\
B^m \left(\frac{\alpha B+\gamma }{B- \beta }\right)^m , 
& \text{if\, $A\beta \leq \alpha\beta+\sqrt{\alpha\beta(\alpha\beta+\gamma)} \leq B\alpha$}, 
\\
B^m \left(\frac{\alpha B+\gamma }{B- \beta }\right)^m , 
& \text{if\, $\alpha\beta+\sqrt{\alpha\beta(\alpha\beta+\gamma)} \leq A\beta < B\alpha$}, 
\\
A^m \left(\frac{A \beta +\gamma }{A -\alpha}\right)^m , 
& \text{if\, $B\alpha \leq \alpha\beta+\sqrt{\alpha\beta(\alpha\beta+\gamma)} \leq A\beta$}, 
\\
A^m \left(\frac{A \beta +\gamma }{A -\alpha}\right)^m , 
& \text{if\, $\alpha\beta+\sqrt{\alpha\beta(\alpha\beta+\gamma)} \leq B\alpha < A\beta$}, 
\\
2 B^m \left(\frac{\alpha B+\gamma }{B- \beta }\right)^m , 
& \text{if\, $\alpha\beta+\sqrt{\alpha\beta(\alpha\beta+\gamma)} < A\beta = B\alpha$},
\end{cases}
\]
with $K$ as in Theorem~\ref{th:diagonal}.
\end{theorem}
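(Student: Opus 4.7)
The plan is to split the argument according to whether $\alpha\beta\neq 0$ or $\alpha\beta=0$, reducing the first case to Theorem~\ref{th:diagonal}. For $\alpha\beta>0$, the conditions in Theorem~\ref{th:diagonal-0} are obtained from those in Theorem~\ref{th:diagonal} by multiplying through by $\alpha\beta>0$: the condition $A/\alpha<1+\sqrt{1+\gamma/(\alpha\beta)}$ is equivalent to $A\beta<\alpha\beta+\sqrt{\alpha\beta(\alpha\beta+\gamma)}$, and analogously for the $B$ side, so the six cases correspond one-to-one. For the asymptotic formulas, using $1+\sqrt{1+\gamma/(\alpha\beta)}=(\sqrt{\alpha\beta}+\sqrt{\alpha\beta+\gamma})/\sqrt{\alpha\beta}$ one checks
\[
(\alpha\beta)^{m}\left(1+\sqrt{1+\tfrac{\gamma}{\alpha\beta}}\right)^{2m}
=\left(\sqrt{\alpha\beta}+\sqrt{\alpha\beta+\gamma}\right)^{2m},
\]
while the remaining five formulas $A^{m}\bigl(\tfrac{A\beta+\gamma}{A-\alpha}\bigr)^{m}$ and $B^{m}\bigl(\tfrac{\alpha B+\gamma}{B-\beta}\bigr)^{m}$ are identical in both statements; routine algebra also turns the constant $K$ of Theorem~\ref{th:diagonal} into the symmetric form displayed there.

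For the degenerate case $\alpha\beta=0$, I would invoke the reductions described at the start of Section~\ref{sec:asympfnn}. If $\alpha=\beta=0$, a one-line induction gives $f_{m,n}=\gamma^{\min\{m,n\}}A^{m-\min\{m,n\}}B^{n-\min\{m,n\}}$, hence $f_{m,m}=\gamma^{m}$, matching the corresponding formula once the relevant cancellations are made. If exactly one of $\alpha,\beta$ vanishes, say $\alpha=0$ and $\beta>0$, I would apply the substitution $\widehat{f}_{m,n}:=\beta^{-n}f_{m,n}$ together with $\widehat{B}=B/\beta$ and $\widehat{\gamma}=\gamma/\beta$, reducing the recurrence to the simpler form $\widehat{f}_{m,n}=\widehat{f}_{m,n-1}+\widehat{\gamma}\,\widehat{f}_{m-1,n-1}$ for $mn>0$, with the same initial conditions as in \eqref{eq:recu}. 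The machinery of Section~\ref{sec:asympfnn}—generating function via Theorem~\ref{th:GFfmn}, diagonal extraction via Theorem~\ref{th:GFfnn}, and the PW method at a strictly minimal critical point—then applies to $\widehat{f}_{m,n}$, and translating back via $f_{m,m}=\beta^{m}\widehat{f}_{m,m}$ yields the formulas in Theorem~\ref{th:diagonal-0} for this parameter region (the case $\beta=0\neq\alpha$ is symmetric).

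The main obstacle will be re-running the PW asymptotic analysis in the degenerate setting: with $\alpha=0$, the bivariate critical-point system used in the proof of Proposition~\ref{prop:principal} degenerates, and one has to reverify smoothness, nondegeneracy and strict minimality of the unique candidate point, and recompute the leading constant in closed form (most terms in $I(x,y)$ and $J(x,y)$ collapse, so this is a short calculation). As a sanity check, the reduced recurrence $\widehat{f}_{m,n}=\widehat{f}_{m,n-1}+\widehat{\gamma}\,\widehat{f}_{m-1,n-1}$ can be iterated explicitly in the $n$-direction, producing a binomial-type closed form for $\widehat{f}_{m,m}$ whose dominant term, by Laplace's method, reproduces the PW output and confirms the formulas case by case.
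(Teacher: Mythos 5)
Your proposal is correct, and for the main case it is exactly the paper's route: the paper obtains Theorem~\ref{th:diagonal-0} for $\alpha\beta\neq 0$ by observing it is Theorem~\ref{th:diagonal} verbatim after multiplying each case condition by $\alpha\beta$ and absorbing $\alpha^m\beta^m$ into the exponential, which is precisely your computation (one small slip: $K$ appears in identical symmetric form in both theorems, so no algebra is needed there). Where you add value is in the degenerate case $\alpha\beta=0$, which the paper dismisses with ``may be approached in a similar fashion'': you spell out the reduction $\widehat{f}_{m,n}=\beta^{-n}f_{m,n}$ and, importantly, your closing ``sanity check'' is in fact the whole proof and is simpler than what you flag as the main obstacle. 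When $\alpha=0$ the recurrence has no horizontal step, so unrolling in the $n$-direction gives the exact identity $f_{m,m}=\sum_{d=0}^{m}\binom{m}{d}\gamma^{d}(A\beta)^{m-d}=(A\beta+\gamma)^m$, which coincides with $A^m\bigl(\frac{A\beta+\gamma}{A-\alpha}\bigr)^m$ since $A\cdot\frac{A\beta+\gamma}{A-\alpha}=A\beta+\gamma$ at $\alpha=0$ (and gives $\gamma^m$ when moreover $A=0$ or when $\alpha=\beta=0$, matching the $B$-side formula). No re-run of the Pemantle--Wilson analysis, no Laplace method, and no critical-point verification is needed: the $K/\sqrt{m}$ regime never occurs for $\alpha\beta=0$ because its defining condition $A\beta,B\alpha<\alpha\beta+\sqrt{\alpha\beta(\alpha\beta+\gamma)}=0$ is vacuously false, consistent with the fact that $K$ itself is undefined there. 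So your plan is sound; the only adjustment is to replace the proposed asymptotic re-analysis in the degenerate case by the exact binomial evaluation you already mention, which both approaches ultimately rely on the paper leaving implicit.
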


Finally, as a direct consequence of Theorem~\ref{th:diagonal-0}, we have the following result regarding the behavior of $f_{m+1,m+1}/f_{m,m}$.

\begin{theorem}
\label{th:cocientes}
Let $A,B,\alpha,\beta,\gamma\geq 0$. If $\{f_{m,n}\}$ is defined as in \eqref{eq:recu}, then the limit
\[
 \mathfrak{L}: = \lim_{m\to\infty} \frac{f_{m+1,m+1}}{f_{m,m}}
\]
always exists, and its value is
\[
\mathfrak{L} =
\begin{cases}
\left(\sqrt{\alpha\beta} + \sqrt{\alpha\beta+\gamma}\right)^2 , 
& \text{if\, $A\beta,B\alpha < \alpha\beta+\sqrt{\alpha\beta(\alpha\beta+\gamma)}$}, 
\\
B \left(\frac{\alpha B+\gamma}{B-\beta}\right) , 
& \text{if\, $A\beta \leq \alpha\beta+\sqrt{\alpha\beta(\alpha\beta+\gamma)} \leq B\alpha$}, 
\\
B \left(\frac{\alpha B+\gamma }{B-\beta}\right) , 
& \text{if\, $\alpha\beta+\sqrt{\alpha\beta(\alpha\beta+\gamma)} \leq A\beta < B\alpha$}, 
\\
A \left(\frac{A \beta + \gamma }{A-\alpha}\right) , 
& \text{if\, $B\alpha \leq \alpha\beta+\sqrt{\alpha\beta(\alpha\beta+\gamma)} \leq A\beta$},
\\
A \left(\frac{A \beta +\gamma }{A-\alpha}\right) , 
& \text{if\, $\alpha\beta+\sqrt{\alpha\beta(\alpha\beta+\gamma)} \leq B\alpha < A\beta$}, 
\\
B \left(\frac{\alpha B+\gamma }{B- \beta }\right) , 
& \text{if\, $\alpha\beta+\sqrt{\alpha\beta(\alpha\beta+\gamma)} < A\beta = B\alpha$}.
\end{cases}
\]
\end{theorem}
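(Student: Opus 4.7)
The plan is to derive Theorem \ref{th:cocientes} as an essentially immediate corollary of Theorem \ref{th:diagonal-0}. In every one of the six regimes delineated there, the asymptotic formula for $f_{m,m}$ takes the shape $f_{m,m}\sim c_m\,\rho^m$, where the exponential rate $\rho$ is precisely one of the six candidates for $\mathfrak{L}$ in the statement, and $c_m$ is a sequence whose ratio $c_{m+1}/c_m$ tends to $1$: in the first regime $c_m=K/\sqrt{m}$ (so $c_{m+1}/c_m=\sqrt{m/(m+1)}\to 1$), and in regimes two through six $c_m$ is simply a positive constant (either $1$ or $2$), so the ratio is identically $1$.

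Given this, the identity
\[
 \frac{f_{m+1,m+1}}{f_{m,m}}
 = \frac{f_{m+1,m+1}/(c_{m+1}\rho^{m+1})}{f_{m,m}/(c_m\rho^m)}\cdot\frac{c_{m+1}}{c_m}\cdot\rho
\]
expresses the ratio as a product of three factors, each of which tends respectively to $1$, $1$, and $\rho$. Reading off $\rho$ from Theorem \ref{th:diagonal-0} in each regime yields the six values listed in Theorem \ref{th:cocientes}: the first case gives $\rho=(\sqrt{\alpha\beta}+\sqrt{\alpha\beta+\gamma})^2$, the second and third give $\rho=B(\alpha B+\gamma)/(B-\beta)$, the fourth and fifth give $\rho=A(A\beta+\gamma)/(A-\alpha)$, and the sixth gives $\rho=B(\alpha B+\gamma)/(B-\beta)$ again (the prefactor $2$ there cancels in the ratio).

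The only mild obstacle is bookkeeping the boundary configurations: the inequalities in Theorem \ref{th:diagonal-0} are non-strict in places, so a single parameter point may fall into more than one of the six cases, and one should verify that the stated limits match along the shared boundaries so that $\mathfrak{L}$ is unambiguously defined. This reduces to elementary algebraic identities, for example checking that at the threshold $A\beta=\alpha\beta+\sqrt{\alpha\beta(\alpha\beta+\gamma)}$ one has
\[
 A\,\frac{A\beta+\gamma}{A-\alpha}=\bigl(\sqrt{\alpha\beta}+\sqrt{\alpha\beta+\gamma}\bigr)^{2},
\]
and analogously for the other boundary, so the candidate values for $\mathfrak{L}$ agree on the overlaps. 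No ingredient beyond the asymptotic output of Theorem \ref{th:diagonal-0} is required, which is why the result can be stated as a theorem but proved with a compact case-by-case argument.
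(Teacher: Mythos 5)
Your proposal is correct and is essentially the paper's own argument: the paper states Theorem~\ref{th:cocientes} precisely as a direct consequence of Theorem~\ref{th:diagonal-0}, obtained by reading off the exponential rate $\rho$ in each regime since the subexponential factors ($K/\sqrt{m}$, $1$, or $2$) have ratio tending to $1$. Your boundary check, e.g.\ that $A\beta=\alpha\beta+\sqrt{\alpha\beta(\alpha\beta+\gamma)}$ forces $A(A\beta+\gamma)/(A-\alpha)=(\sqrt{\alpha\beta}+\sqrt{\alpha\beta+\gamma})^2$, is also consistent with the paper's remark on the threshold case.
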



\section{P-recursivity of central weighted Delannoy numbers}
\label{sec:recur}

In this section we will still assume that $\alpha=\beta=1$, i.e., we consider the sequence $\{f_{m,n}\}$ defined as in \eqref{eq:recuab1}. Recall that this is not a restriction as long as $\alpha\beta\neq 0$. The remaining cases either are trivial (if $\alpha=\beta=0$), or admit a similar treatment.

First, we will see that the generating function of the sequence $\{\mathfrak{f}_n\}=\{f_{n,n}\}$ is holonomic (D-finite). We will do so constructively, i.e., explicitly exhibiting a differential equation satisfied by the generating function. 

\begin{proposition} 
\label{prop:D-finita} 
The generating function $G(z) = \sum_{n\ge0} \mathfrak{f}_{n} z^n$ satisfies a differential equation
\begin{equation}
\label{eq:ED}
q_0(z) G(z) + z q_1(z) G'(z) + z^2 q_2(z) G''(z) = c(z),
\end{equation} 
where $q_0(z)$, $q_1(z)$ and $q_2(z)$ are polynomials of degree at most $4$, $c(z)$ is a polynomial of degree at most $2$, and all their coefficients depend on $A$, $B$ and $\gamma$.
\end{proposition}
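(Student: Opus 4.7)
The plan is to exploit the explicit closed form for $G(z)$ obtained in Theorem~\ref{th:GFfnn}. Specialising to $\alpha=\beta=1$, that formula exhibits $G(z)$ as an element of the degree-two algebraic extension $\mathbb{Q}(A,B,\gamma)(z)[S]/(S^2-\Delta)$, where
\[
\Delta(z) = 1 + \gamma^2 z^2 - 2(2+\gamma)z.
\]
The first step is to rationalise the two factors of the form $u(z)+S$ and $v(z)+AS$ that appear in the denominator of the formula of Theorem~\ref{th:GFfnn}, by multiplying numerator and denominator by the conjugates $u(z)-S$ and $v(z)-AS$; each such product is a low-degree polynomial in $z$ (using $S^2=\Delta$). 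This produces a representation
\[
G(z) = F(z) + H(z)\,S(z), \qquad F, H \in \mathbb{Q}(A,B,\gamma)(z),
\]
with $F$ and $H$ having explicit, low-degree denominators.

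Next, I would use the identity $S' = \Delta'/(2S) = \Delta'S/(2\Delta)$, which shows that differentiation preserves the shape above: every derivative satisfies $G^{(k)} = F_k + H_k\,S$ with $F_k, H_k \in \mathbb{Q}(A,B,\gamma)(z)$. Hence $G, G', G''$ all lie in the two-dimensional $\mathbb{Q}(z)$-vector space $\mathbb{Q}(z) \oplus \mathbb{Q}(z)\,S$, and their ``$S$-components'' must admit a nontrivial $\mathbb{Q}(z)$-linear relation. Concretely, one can find rational functions $\tilde{q}_0, \tilde{q}_1, \tilde{q}_2$, not all zero, with $\tilde{q}_0 H + z\tilde{q}_1 H_1 + z^2 \tilde{q}_2 H_2 = 0$; clearing denominators yields polynomial $q_0, q_1, q_2$, and the corresponding combination $q_0F + zq_1F_1 + z^2q_2F_2 =: c(z)$ is the right-hand side of~\eqref{eq:ED}. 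The rationality of $c(z)$ is automatic; the fact that $c(z)$ turns out to be a polynomial will follow once the common denominators are chosen appropriately in the previous clearing.

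The hard part will be establishing the stated degree bounds ($\deg q_i \leq 4$ and $\deg c \leq 2$). This requires careful bookkeeping of the numerator and denominator degrees of $F, H, F_1, H_1, F_2, H_2$, which are controlled by $\Delta$ together with the two low-degree polynomials produced in the rationalisation step; the bounds then emerge from the linear algebra used to solve for the minimal-degree triple $(q_0, q_1, q_2)$. In the computational style announced throughout the paper, the cleanest route is to carry out this manipulation in a computer algebra system, exhibit $q_0, q_1, q_2$ and $c$ explicitly, and read off their degrees. The mere existence of a D-finite annihilator is guaranteed a priori by the classical theorem that algebraic functions are D-finite; what requires verification is the specific form and the degree bounds.
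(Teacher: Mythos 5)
Your proposal is correct in outline, but it reaches the result by a genuinely different route than the paper. You derive the differential equation structurally from the closed form of Theorem~\ref{th:GFfnn}: rationalise the conjugate factors so that $G=F+HS$ with $S^2=\Delta$ a polynomial, observe via $S'=\Delta' S/(2\Delta)$ that $G,G',G''$ all live in the two-dimensional $\mathbb{Q}(A,B,\gamma)(z)$-module $\mathbb{Q}(z)\oplus\mathbb{Q}(z)S$, kill the $S$-component by linear algebra, and clear denominators --- the classical ``algebraic $\Rightarrow$ D-finite'' argument made explicit for this $G$. The paper does not argue this way at all: it simply exhibits the (very large) coefficients $q_{i,j}$ and $c_j$ in closed form --- which, as its own Remark explains, were reverse-engineered from a P-recurrence guessed from initial terms --- and then states that symbolic computation verifies that $G$ satisfies \eqref{eq:ED}. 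Your route buys an a priori existence proof and an explanation of where the equation comes from, uniformly in the parameters; the paper's exhibit-and-verify route buys the explicit coefficients, which are then actually needed downstream (for Proposition~\ref{prop:P-recursiva} and for the observation that $q_2\equiv 0$ when $\gamma=(B-2)B$ or $\gamma=(A-2)A$). One caveat on your side: the specific degree bounds $\deg q_i\le 4$, $\deg c\le 2$ --- which are the real content beyond generic D-finiteness --- are not delivered by your bookkeeping sketch itself (note also that multiplying through to make $c$ polynomial can raise degrees), so your proof is only complete once the computer-algebra step is actually carried out and the resulting $q_0,q_1,q_2,c$ are displayed; at that point your argument and the paper's proof converge to essentially the same computational verification.
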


\begin{proof}
Let us consider the polynomials
\[
  q_i(z) = q_{i,0} + q_{i,1}z + q_{i,2}z^2 + q_{i,3}z^3 + q_{i,4}z^4,
  \quad i=0,1,2, \quad\text{and}\quad
  c(z) = c_0 + c_1z + c_2z^2
\]
with
\begin{align*}
q_{0,0} &= - (-1 + A) (-1 + B) (20 A B - 10 A^2 B - 10 A B^2 + 4 A^2 B^2 
  + 10 A \gamma - 5 A^2 \gamma + 10 B \gamma \\*
  &\qquad - A^2 B \gamma - 5 B^2 \gamma 
  - A B^2 \gamma + 6 \gamma^2 - A \gamma^2 - B \gamma^2),
\\
q_{0,1} &= -2 (-1 + A) (-1 + B) (-2 + A + B) (6 A B + 3 A \gamma + 3 B \gamma 
  + 6 A B \gamma + 2 \gamma^2 + 2 A \gamma^2 \\*
  &\qquad + 2 B \gamma^2 + A B \gamma^2 + \gamma^3),
\\
q_{0,2} &= -8 A^3 B + 4 A^4 B + 12 A^3 B^2 - 8 A^4 B^2 - 8 A B^3 + 12 A^2 B^3  
  - 4 A^3 B^3 + 2 A^4 B^3 + 4 A B^4 - 8 A^2 B^4 \\
  &\qquad  + 2 A^3 B^4 - 4 A^3 \gamma  
  + 2 A^4 \gamma - 12 A^2 B \gamma + 4 A^3 B \gamma - 2 A^4 B \gamma 
  - 12 A B^2 \gamma + 36 A^2 B^2 \gamma - 4 A^3 B^2 \gamma \\
  &\qquad - 5 A^4 B^2 \gamma - 4 B^3 \gamma + 4 A B^3 \gamma - 4 A^2 B^3 \gamma 
  + 4 A^3 B^3 \gamma + A^4 B^3 \gamma + 2 B^4 \gamma - 2 A B^4 \gamma - 5 A^2 B^4 \gamma \\
  &\qquad + A^3 B^4 \gamma - 8 A^2 \gamma^2 + 2 A^3 \gamma^2 - 4 A B \gamma^2 
  + 4 A^2 B \gamma^2 - 4 A^3 B \gamma^2 - A^4 B \gamma^2 - 8 B^2 \gamma^2 + 4 A B^2 \gamma^2  \\
  &\qquad + 28 A^2 B^2 \gamma^2 - 6 A^3 B^2 \gamma^2 - A^4 B^2 \gamma^2 + 2 B^3 \gamma^2 
  - 4 A B^3 \gamma^2 - 6 A^2 B^3 \gamma^2 + 4 A^3 B^3 \gamma^2 - A B^4 \gamma^2 \\
  &\qquad - A^2 B^4 \gamma^2 - 2 A \gamma^3 - 5 A^2 \gamma^3 + A^3 \gamma^3 
  - 2 B \gamma^3 + 4 A B \gamma^3 + 6 A^2 B \gamma^3 - 4 A^3 B \gamma^3 - 5 B^2 \gamma^3 \\
  &\qquad + 6 A B^2 \gamma^3 + 4 A^2 B^2 \gamma^3 + B^3 \gamma^3 
  - 4 A B^3 \gamma^3 - A \gamma^4 - A^2 \gamma^4 
  - B \gamma^4 + 4 A B \gamma^4 - B^2 \gamma^4,
\\
q_{0,3} &= 2 A B (A + \gamma) (B + \gamma) (2 A B + A \gamma + B \gamma) 
  (6 - 3 A - 3 B + 2 A B + 6 \gamma 
  - 2 A \gamma - 2 B \gamma + A B \gamma + \gamma^2),
\\
q_{0,4} &= -A B \gamma^2 (A + \gamma) (B + \gamma) (20 A B - 10 A^2 B - 10 A B^2 + 6 A^2 B^2 
  + 10 A \gamma - 5 A^2 \gamma + 10 B \gamma \\*
  &\qquad + A^2 B \gamma - 5 B^2 \gamma + A B^2 \gamma 
  + 4 \gamma^2 + A \gamma^2 + B \gamma^2),
\\
q_{1,0} &= (-1 + A) (-1 + B) (20 A B - 10 A^2 B - 10 A B^2 + 4 A^2 B^2 
  + 10 A \gamma - 5 A^2 \gamma + 10 B \gamma - A^2 B \gamma \\*
  &\qquad - 5 B^2 \gamma - A B^2 \gamma + 6 \gamma^2 - A \gamma^2 - B \gamma^2),
\\
q_{1,1} &= -32 A B + 48 A^2 B - 12 A^3 B - 2 A^4 B + 48 A B^2 - 68 A^2 B^2 + 
  + 14 A^3 B^2 + 2 A^4 B^2 - 12 A B^3 \\
  &\qquad  + 14 A^2 B^3 - 2 A B^4 + 2 A^2 B^4 - 16 A \gamma + 24 A^2 \gamma 
  - 6 A^3 \gamma - A^4 \gamma - 16 B \gamma 
  + 16 A B \gamma + 10 A^2 B \gamma \\
  &\qquad  - 8 A^3 B \gamma + 24 B^2 \gamma + 10 A B^2 \gamma  
  - 60 A^2 B^2 \gamma + 18 A^3 B^2 \gamma 
  + A^4 B^2 \gamma - 6 B^3 \gamma - 8 A B^3 \gamma + 18 A^2 B^3 \gamma \\
  &\qquad - B^4 \gamma + A^2 B^4 \gamma - 12 \gamma^2 + 8 A \gamma^2 
  + 12 A^2 \gamma^2 - 6 A^3 \gamma^2 + 8 B \gamma^2 - 14 A^2 B \gamma^2 
  + 4 A^3 B \gamma^2 + 12 B^2 \gamma^2 \\
  &\qquad  - 14 A B^2 \gamma^2 - 4 A^2 B^2 \gamma^2 + 4 A^3 B^2 \gamma^2 
  - 6 B^3 \gamma^2 + 4 A B^3 \gamma^2 
  + 4 A^2 B^3 \gamma^2 - 6 \gamma^3 + 10 A \gamma^3 - 3 A^2 \gamma^3 \\
  &\qquad + 10 B \gamma^3 - 16 A B \gamma^3 + 4 A^2 B \gamma^3 
  - 3 B^2 \gamma^3 + 4 A B^2 \gamma^3,
\\
q_{1,2} &= 32 A^3 B - 16 A^4 B - 48 A^3 B^2 + 28 A^4 B^2 + 32 A B^3  
  - 48 A^2 B^3 + 20 A^3 B^3  - 6 A^4 B^3 - 16 A B^4 \\
  &\qquad + 28 A^2 B^4  
  - 6 A^3 B^4 - 4 A^4 B^4 + 16 A^3 \gamma - 8 A^4 \gamma + 48 A^2 B \gamma 
  - 16 A^3 B \gamma + 4 A^4 B \gamma + 48 A B^2 \gamma \\
  &\qquad - 144 A^2 B^2 \gamma + 14 A^3 B^2 \gamma + 19 A^4 B^2 \gamma 
  + 16 B^3 \gamma - 16 A B^3 \gamma + 14 A^2 B^3 \gamma 
  - 4 A^3 B^3 \gamma - 11 A^4 B^3 \gamma \\
  &\qquad - 8 B^4 \gamma + 4 A B^4 \gamma 
  + 19 A^2 B^4 \gamma - 11 A^3 B^4 \gamma  + 20 A^2 \gamma^2 + 4 A^3 \gamma^2 
  - 4 A^4 \gamma^2 + 28 A B \gamma^2 - 10 A^2 B \gamma^2 \\
  &\qquad - 8 A^3 B \gamma^2 + 9 A^4 B \gamma^2 + 20 B^2 \gamma^2 
  - 10 A B^2 \gamma^2 - 108 A^2 B^2 \gamma^2 + 32 A^3 B^2 \gamma^2 
  - 3 A^4 B^2 \gamma^2 + 4 B^3 \gamma^2 \\
  &\qquad - 8 A B^3 \gamma^2 
  + 32 A^2 B^3 \gamma^2 - 24 A^3 B^3 \gamma^2 - 4 B^4 \gamma^2 
  + 9 A B^4 \gamma^2 - 3 A^2 B^4 \gamma^2 + 2 A \gamma^3 + 17 A^2 \gamma^3 \\
  &\qquad - 3 A^3 \gamma^3 + 2 B \gamma^3 + 20 A B \gamma^3 
  - 40 A^2 B \gamma^3 + 12 A^3 B \gamma^3 + 17 B^2 \gamma^3  
  - 40 A B^2 \gamma^3 - 6 A^3 B^2 \gamma^3 \\
  &\qquad - 3 B^3 \gamma^3 + 12 A B^3 \gamma^3 - 6 A^2 B^3 \gamma^3 + A \gamma^4 
  + 3 A^2 \gamma^4 + B \gamma^4 - 6 A^2 B \gamma^4  
  + 3 B^2 \gamma^4 - 6 A B^2 \gamma^4,
\\
q_{1,3} &= 96 A^3 B^3 - 48 A^4 B^3 - 48 A^3 B^4 + 28 A^4 B^4 + 144 A^3 B^2 \gamma  
  - 72 A^4 B^2 \gamma + 144 A^2 B^3 \gamma - 48 A^3 B^3 \gamma \\
  &\qquad + 8 A^4 B^3 \gamma - 72 A^2 B^4 \gamma + 8 A^3 B^4 \gamma 
  + 14 A^4 B^4 \gamma + 28 A^3 B \gamma^2 - 14 A^4 B \gamma^2 + 212 A^2 B^2 \gamma^2 \\
  &\qquad  + 34 A^3 B^2 \gamma^2 - 48 A^4 B^2 \gamma^2  
  + 28 A B^3 \gamma^2 + 34 A^2 B^3 \gamma^2 
  - 32 A^3 B^3 \gamma^2 + 22 A^4 B^3 \gamma^2 - 14 A B^4 \gamma^2 \\
  &\qquad 
  - 48 A^2 B^4 \gamma^2 + 22 A^3 B^4 \gamma^2 - 10 A^3 \gamma^3  
  + 5 A^4 \gamma^3 + 38 A^2 B \gamma^3 + 24 A^3 B \gamma^3 - 18 A^4 B \gamma^3  
  + 38 A B^2 \gamma^3 \\
  &\qquad  + 204 A^2 B^2 \gamma^3 - 62 A^3 B^2 \gamma^3 + 3 A^4 B^2 \gamma^3 - 10 B^3 \gamma^3 
  + 24 A B^3 \gamma^3 - 62 A^2 B^3 \gamma^3 + 32 A^3 B^3 \gamma^3  \\
  &\qquad + 5 B^4 \gamma^3 - 18 A B^4 \gamma^3 + 3 A^2 B^4 \gamma^3 - 14 A^2 \gamma^4 
  + 4 A^3 \gamma^4 + 66 A^2 B \gamma^4 - 20 A^3 B \gamma^4 - 14 B^2 \gamma^4 \\
  &\qquad + 66 A B^2 \gamma^4  + 4 A^2 B^2 \gamma^4 + 4 A^3 B^2 \gamma^4 
  + 4 B^3 \gamma^4 - 20 A B^3 \gamma^4 
  + 4 A^2 B^3 \gamma^4 - 4 A \gamma^5 - A^2 \gamma^5 - 4 B \gamma^5 \\
  &\qquad + 16 A B \gamma^5  
  + 4 A^2 B \gamma^5 - B^2 \gamma^5 + 4 A B^2 \gamma^5,
\\
q_{1,4} &= -A B \gamma^2 (A + \gamma) (B + \gamma) (36 A B - 18 A^2 B - 18 A B^2 
  + 10 A^2 B^2 + 18 A \gamma - 9 A^2 \gamma + 18 B \gamma + A^2 B \gamma \\*
  &\qquad - 9 B^2 \gamma + A B^2 \gamma + 8 \gamma^2 
  + A \gamma^2 + B \gamma^2),
\\
q_{2,0} &= -2 (-1 + A) (-1 + B) (-2 A + A^2 - \gamma) (-2 B + B^2 - \gamma),
\\
q_{2,1} &= 2 (-2 A + A^2 - \gamma) (-2 B + B^2 - \gamma) (4 - 4 A - A^2 - 4 B + 4 A B 
  + A^2 B - B^2 + A B^2 + 2 \gamma \\*
  &\qquad - 3 A \gamma - 3 B \gamma + 4 A B \gamma),
\\
q_{2,2} &= -2 (-2 A + A^2 - \gamma) (-2 B + B^2 - \gamma) (-4 A^2 + 4 A^2 B - 4 B^2 
  + 4 A B^2 + A^2 B^2 - 4 A \gamma - 2 A^2 \gamma \\* 
  &\qquad - 4 B \gamma + 8 A B \gamma + 3 A^2 B \gamma - 2 B^2 \gamma 
  + 3 A B^2 \gamma + \gamma^2 - 3 A \gamma^2 - 3 B \gamma^2 + 6 A B \gamma^2),
\\
q_{2,3} &= 2 (-2 A + A^2 - \gamma) (-2 B + B^2 - \gamma) (4 A^2 B^2 + 4 A^2 B \gamma 
  + 4 A B^2 \gamma + 2 A^2 B^2 \gamma - A^2 \gamma^2 + 4 A B \gamma^2 \\*
  &\qquad + 3 A^2 B \gamma^2 - B^2 \gamma^2 + 3 A B^2 \gamma^2 
  - A \gamma^3 - B \gamma^3 + 4 A B \gamma^3),
\\
q_{2,4} &= -2 A B (-2 A + A^2 - \gamma) (-2 B + B^2 - \gamma) \gamma^2 (A + \gamma) (B + \gamma),
\\
c_0 &= -((-1 + A) (-1 + B) (20 A B - 10 A^2 B - 10 A B^2 + 4 A^2 B^2 
  + 10 A \gamma - 5 A^2 \gamma + 10 B \gamma - A^2 B \gamma \\*
  &\qquad - 5 B^2 \gamma - A B^2 \gamma + 6 \gamma^2 - A \gamma^2 - B \gamma^2)),
\\
c_1 &= -2 (-1 + A) (-1 + B) (-2 + A + B) (6 A B + 3 A \gamma + 3 B \gamma + 6 A B \gamma 
  + 2 \gamma^2 + 2 A \gamma^2 + 2 B \gamma^2 \\*
  &\qquad 
  + A B \gamma^2 + \gamma^3),
\\
c_2 &= (2 - A - B + A B + \gamma) (-2 A^3 B + 2 A^3 B^2 - 2 A B^3 + 2 A^2 B^3 - A^3 \gamma 
  - 3 A^2 B \gamma - 3 A B^2 \gamma  + 6 A^2 B^2 \gamma \\*
  &\qquad 
  + A^3 B^2 \gamma - B^3 \gamma + A^2 B^3 \gamma - 2 A^2 \gamma^2 - 2 B^2 \gamma^2 
  + 4 A^2 B^2 \gamma^2 - A^2 \gamma^3 + A^2 B \gamma^3 - B^2 \gamma^3 + A B^2 \gamma^3).
\end{align*}

Using symbolic computation it can be verified that $G(z)$ satisfies the differential equation of the statement.
\end{proof}

\begin{remark} 
\label{rem:D-finita2} 
Observe that, if $\gamma = (B-2) B$ or $\gamma=(A-2) A$, then $q_{2,i}=0$ for $i=0,\dots,4$ in the proof of Proposition~\ref{prop:D-finita}. Thus, $q_2(z)=0$ and the generating function $G(z) = \sum_{n\ge0} \mathfrak{f}_{n} z^n$ satisfies a differential equation of order~$1$
\begin{equation*}
q_0(z) G(z) + z q_1(z) G'(z) = c(z),
\end{equation*} 
with $q_0(z)$, $q_1(z)$, and $c(z)$ as in Proposition~\ref{prop:D-finita}.
\end{remark}

The previous proposition implies that $\{\mathfrak{f}_{n}\}$ is a P-recursive sequence \cite[Th.~6.4.6]{St1999}. Next, we specify the form of the recurrence satisfied by~$\{\mathfrak{f}_{n}\}$.

\begin{proposition} 
\label{prop:P-recursiva} 
The diagonal sequence $\{\mathfrak{f}_m\}$ is P-recursive. Namely, it satisfies a recurrence relation of the form
\begin{equation}
\label{eq:recgeneral}
  p_0(m) \mathfrak{f}_m + p_1(m) \mathfrak{f}_{m-1} + p_2(m) \mathfrak{f}_{m-2}
  + p_3(m)\mathfrak{f}_{m-3} + p_4(m) \mathfrak{f}_{m-4} = 0,
\end{equation}
where the $p_i(x)$ are polynomials of degree at most~$2$.
\end{proposition}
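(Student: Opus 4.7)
The plan is to extract the recurrence directly from the ODE of Proposition~\ref{prop:D-finita} by coefficient comparison, a standard procedure for passing from the D-finite (holonomic) description of $G(z)$ to a P-recursive description of its coefficients.

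First, I would write the polynomial coefficients of the ODE in the form
\[
  q_i(z) = \sum_{k=0}^{4} q_{i,k} z^k \quad (i=0,1,2), \qquad c(z)=\sum_{k=0}^{2} c_k z^k ,
\]
and record the three identities
\[
  [z^m]\,G(z) = \mathfrak{f}_m, \qquad [z^m]\,zG'(z) = m\,\mathfrak{f}_m, \qquad [z^m]\,z^2 G''(z) = m(m-1)\,\mathfrak{f}_m ,
\]
which follow at once from termwise differentiation of $G(z)=\sum_{n\ge 0}\mathfrak{f}_n z^n$. Multiplying by $z^k$ simply shifts the index: $[z^m]\,z^{k+1}q_{1,k}G'(z)=q_{1,k}(m-k)\mathfrak{f}_{m-k}$ and analogously for the other pieces.

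Next I would substitute these identities into \eqref{eq:ED} and extract the coefficient of $z^m$ on both sides. For every $m\ge 3$ the right-hand side contributes nothing because $\deg c\le 2$, so we obtain
\[
  \sum_{k=0}^{4} \bigl[\, q_{0,k} + (m-k)\,q_{1,k} + (m-k)(m-k-1)\,q_{2,k} \,\bigr]\,\mathfrak{f}_{m-k} \;=\; 0 .
\]
Defining
\[
  p_k(m) := q_{0,k} + (m-k)\,q_{1,k} + (m-k)(m-k-1)\,q_{2,k}, \qquad k=0,1,2,3,4,
\]
this is exactly the claimed four-term recurrence \eqref{eq:recgeneral}. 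Each $p_k$ is a polynomial in $m$ whose leading term comes from the $q_{2,k}m^2$ contribution, so $\deg p_k \le 2$, as required.

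The only conceptual point to address is to make sure the recurrence is not degenerate, i.e.\ that $p_0(m)$ is not identically zero and the initial conditions $\mathfrak{f}_0,\mathfrak{f}_1,\mathfrak{f}_2,\mathfrak{f}_3$ determine all subsequent terms. Since the coefficient $q_{2,0}$ is, up to a nonzero constant, $(-1+A)(-1+B)(A^2-2A-\gamma)(B^2-2B-\gamma)$ (and $q_{1,0}$, $q_{0,0}$ provide the generic first-order and zeroth-order contributions), $p_0(m)$ is a nonzero polynomial in $m$ in the generic parameter range, so it vanishes at only finitely many values of $m$ and the recurrence is nontrivial for all sufficiently large $m$. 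I expect the only tedious step, but not the conceptual obstacle, to be verifying with a computer algebra system that the explicit polynomials given in Proposition~\ref{prop:D-finita} actually satisfy \eqref{eq:ED}; the passage from that ODE to the P-recurrence is then automatic by the coefficient extraction above.
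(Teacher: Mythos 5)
Your argument is correct and is essentially the paper's own proof: both deduce \eqref{eq:recgeneral} from the differential equation \eqref{eq:ED} of Proposition~\ref{prop:D-finita} by termwise differentiation of $G(z)=\sum_{m\ge 0}\mathfrak{f}_m z^m$ and comparison of the coefficients of $z^m$. You merely make explicit the formula $p_k(m)=q_{0,k}+(m-k)q_{1,k}+(m-k)(m-k-1)q_{2,k}$ and the nondegeneracy remark, which the paper leaves as a direct verification.
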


\begin{proof}
If we differentiate the series $G(z) = \sum_{m \ge 0} \mathfrak{f}_{m} z^{m}$ term by term and we substitute in \eqref{eq:ED}, it can be directly verified that the coefficients $\{\mathfrak{f}_{m}\}_{m=0}^{\infty}$
satisfy the claimed recurrence.
\end{proof}

\begin{remark} 
If we recall Remark~\ref{rem:D-finita2}, when $(B-2) B = \gamma$ or $(A-2) A = \gamma$ the differential equation satisfied by $G(z)$ was of order~$1$. This implies that we have a slightly simpler version of Proposition~\ref{prop:P-recursiva}. In fact, in such case it can be seen that the sequence $\{\mathfrak{f}_m\}$ satisfies a $4$-term recurrence relation 
\begin{equation*}
  p_0(m) \mathfrak{f}_m + p_1(m) \mathfrak{f}_{m-1} + p_2(m) \mathfrak{f}_{m-2}
  + p_3(m)\mathfrak{f}_{m-3} + p_4(m) \mathfrak{f}_{m-4} = 0,
\end{equation*}
where the $p_i(x)$ are polynomials of degree at most~$1$.
\end{remark}

Using the polynomials $q_0(z)$, $q_1(z)$, $q_2(z)$, and $c(z)$ whose coefficients were given in the proof of Proposition~\ref{prop:D-finita}, it is very easy, with the help of any computer algebra system, to find the precise polynomials $p_0(x),\dots,p_4(x)$ that appear in Proposition~\ref{prop:P-recursiva}. However, as suggested by the proof of Proposition~\ref{prop:D-finita}, their expressions are rather cumbersome, so we opt not to include them in the paper.

In addition, it is important to note that, although the results of this sections are stated for the case $\alpha=\beta=1$, undoing the change explained at the beginning of Section~\ref{sec:asympfnn} we would get, as a direct consequence of Propositions~\ref{prop:D-finita} and~\ref{prop:P-recursiva}, the corresponding results for the more general situation $\alpha\beta\neq 0$. Anyhow, the degrees of the involved polynomials do not vary and we do not reproduce their precise expressions for the same reason.

In fact, all the remaining cases ($\alpha\beta=0$) can be approached using the same general strategy. Once the existence of a recurrence relation like \eqref{eq:recgeneral} is guaranteed, for fixed values of $A$, $B$, $\alpha$, $\beta$ and $\gamma$, this recurrence relation \eqref{eq:recgeneral} can be obtained by solving a linear system with $15$ equations built using the values $\mathfrak{f}_0,\mathfrak{f}_1,\dots,\mathfrak{f}_{18}$. 

\begin{remark} 
Although, from a formal point of view, Proposition~\ref{prop:P-recursiva} is deduced from Proposition~\ref{prop:D-finita}, 
they were in fact conceived following the inverse path. The recurrence coefficients for a finite number of terms of the sequence $\{\mathfrak{f}_{m}\}$ were first obtained by symbolic computation, and from them, we built the differential equation of Proposition~\ref{prop:D-finita} (that is, this allowed to guess the coefficients $q_{i,j}$ and $c_j$ that appear in the proof). Once that the differential equation has been ``discovered'', to check that $G(z)$ satisfies the equation is just a simple computational task.
\end{remark}

It is easy to check that $p_0(1)=0$, but this is not a problem to apply the recurrence \eqref{eq:recgeneral}, becase this formula is defined for $m \ge 4$. However, the polynomial $p_0(x)$ can have other positive integer roots. If $p_0(m)\neq 0$ for all $m \ge 4$, \eqref{eq:recgeneral} allows us to express $\mathfrak{f}_m$ as a $4$-term recurrence relation for $m \ge 4$. 
Unfortunately, if $p_0(m_0) = 0$ for some integer $m_0 \ge 4$, we can not isolate $\mathfrak{f}_{m_0}$ in \eqref{eq:recgeneral} and it cannot be computed using the recurrence. In any case, it is still possible to recursively compute $\mathfrak{f}_m$ for every $m\geq \max\{n:p_0(n)=0\}$. The following examples illustrate these two possible situations.

\begin{example}
If $A=5,B=4,\alpha=3,\beta=2,\gamma=1$, then
\[
0 = p_0(m)\mathfrak{f}_{m} + p_1(m) \mathfrak{f}_{m-1}
+ p_2(m) \mathfrak{f}_{m-2} + p_3(m) \mathfrak{f}_{m-3} + p_4(m) \mathfrak{f}_{m-4}
\]
with
\begin{gather*}
p_0(m) = 52 m^2 + 2674 m - 2726,
\\
p_1(m) = -4134 m^2 - 211907 m + 326301,
\\
p_2(m) = 109564m^2 + 5597900 m - 11612034,
\\
p_3(m) = -969462 m^2 - 49366597 m + 129474769,
\\
p_4(m) = 37180 m^2 + 1874730 m - 5958810. 
\end{gather*}
Moreover, since $p_0(m) \neq 0$ for all $m > 1$, the sequence $\{\mathfrak{f}_{m}\}$ can be defined using a $4$-term recurrence relation.
\end{example}

\begin{example} 
If $A=2, B=4, \alpha=1, \beta=1, \gamma=8/5$, then
\[
0 = p_0(m)\mathfrak{f}_{m} + p_1(m) \mathfrak{f}_{m-1}
+ p_2(m) \mathfrak{f}_{m-2} + p_3(m) \mathfrak{f}_{m-3} + p_4(m) \mathfrak{f}_{m-4}
\]
with
\begin{gather*}
p_0(m) = 1875 m^2 - 20625 m + 18750,
\\
p_1(m) = - 41000 m^2 + 457750 m - 573500,  
\\
p_2(m) = 303600 m^2 - 3443400 m + 5515600, 
\\ 
p_3(m) = - 796160 m^2 + 9191040 m - 17948160,
\\ 
p_4(m) = 258048 m^2 - 3096576 m + 6967296. 
\end{gather*}
In this case, $p_0(10)=0$ but $p_0(m)\neq0$ for all $m>10$. Thus, the sequence $\{\mathfrak{f}_{m}\}$ can be expressed in a recursive way only for $m>10$.
\end{example}

For some values of $A$ and $B$, the recurrence equation \eqref{eq:recgeneral} given in Proposition~\ref{prop:P-recursiva} is of order less than $4$, as seen in the following results.

\begin{proposition} 
If $A\in \{0,1\}$ or $B\in \{0,1\}$, then the recurrence relation defining $\{\mathfrak{f}_m\}$ is of the form
\begin{equation*}
\overline{p}_0(m) \mathfrak{f}_m + \overline{p}_1(m) \mathfrak{f}_{m-1} 
+ \overline{p}_2(m) \mathfrak{f}_{m-2} + \overline{p}_3(m) \mathfrak{f}_{m-3} = 0,
\end{equation*}
where the $\overline{p}_i(x)$ are polynomials of degree at most~$2$.
\end{proposition}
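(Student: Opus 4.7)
The plan is to follow the same recipe used in the proofs of Propositions~\ref{prop:D-finita} and~\ref{prop:P-recursiva}, but to exploit the fact that when $A$ or $B$ takes one of the boundary values $0$ or $1$, the bivariate generating function of Lemma~\ref{lem:generatriz} simplifies substantially, and this simplification propagates both to the differential equation satisfied by $G(z)=\sum_m\mathfrak{f}_m z^m$ and to the induced recurrence on $\{\mathfrak{f}_m\}$.

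For the case $B=1$ (and, symmetrically, $A=1$, by the exchange $(A,x)\leftrightarrow(B,y)$, which leaves the diagonal invariant), substituting $B=1$ in Lemma~\ref{lem:generatriz} the numerator becomes $1-x-y+xy=(1-x)(1-y)$, so a factor $(1-y)$ cancels against the denominator and
\[
f(x,y)=\frac{1-x}{(1-Ax)(1-x-y-\gamma xy)}.
\]
Applying the residue-based diagonal extraction of Theorem~\ref{th:GFfnn} to this simpler rational function yields a correspondingly simpler closed form for $G(z)$. Following then the constructive argument of Proposition~\ref{prop:D-finita}, I would verify by symbolic computation that $G(z)$ satisfies a differential equation
\[
\widetilde q_0(z)G(z)+z\widetilde q_1(z)G'(z)+z^2\widetilde q_2(z)G''(z)=\widetilde c(z)
\]
in which $\widetilde q_0,\widetilde q_1,\widetilde q_2$ have degree at most $3$ (rather than $4$). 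Extracting the coefficient of $z^m$ exactly as in Proposition~\ref{prop:P-recursiva} then produces a 4-term recurrence whose polynomial coefficients are of degree at most~$2$, as required.

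For the cases $A=0$ and $B=0$, the degree drop can be read off directly from the explicit formulas in the proof of Proposition~\ref{prop:D-finita}: each of the top coefficients $q_{0,4}$, $q_{1,4}$, and $q_{2,4}$ carries a factor $AB$, so all three vanish identically whenever $A=0$ or $B=0$. Consequently $q_0,q_1,q_2$ have degree at most $3$ in those cases, and the same coefficient extraction produces a 4-term recurrence with polynomial coefficients of degree at most~$2$ in~$m$.

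The main obstacle is the symbolic verification in the $A=1$ and $B=1$ cases: here the degree reduction is not visible by direct inspection of the formulas of Proposition~\ref{prop:D-finita}, since no $q_{i,4}$ contains a factor of $(A-1)$ or $(B-1)$. The cleanest way around this — and the one suggested by the remark following Proposition~\ref{prop:P-recursiva} — is to compute sufficiently many initial terms $\mathfrak{f}_0,\mathfrak{f}_1,\dots$ symbolically in the surviving parameters, guess the 4-term recurrence by solving the corresponding linear system for the $\overline{p}_i$, and then certify it against the simplified expression for $G(z)$ obtained above.
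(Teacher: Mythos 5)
Your treatment of the cases $A=0$ or $B=0$ is correct and is exactly the paper's argument: since $q_{0,4}$, $q_{1,4}$ and $q_{2,4}$ all carry the factor $AB$, the coefficient $p_4(m)=q_{0,4}+(m-4)q_{1,4}+(m-4)(m-5)q_{2,4}$ in \eqref{eq:recgeneral} vanishes identically, and the five-term recurrence collapses to the stated four-term one.

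For $A=1$ or $B=1$, however, there is a genuine gap, and it stems from looking at the wrong end of the polynomials $q_i$. The reduction in this case does not come from a drop in the degrees of the $q_i$ but from their \emph{constant} terms: $q_{0,0}$, $q_{1,0}$, $q_{2,0}$ (and also $c_0$, $c_1$) all contain the factor $(-1+A)(-1+B)$, so when $A=1$ or $B=1$ one has $p_0(m)=q_{0,0}+m\,q_{1,0}+m(m-1)\,q_{2,0}\equiv 0$. The recurrence \eqref{eq:recgeneral} then reads $p_1(m)\mathfrak{f}_{m-1}+p_2(m)\mathfrak{f}_{m-2}+p_3(m)\mathfrak{f}_{m-3}+p_4(m)\mathfrak{f}_{m-4}=0$, and the shift $m\mapsto m+1$ gives precisely the claimed form with $\overline{p}_i(x)=p_{i+1}(x+1)$, still of degree at most $2$; this index shift is what the paper's subsequent remark alludes to, and ``$p_0(m)=0$ when $A=1$ or $B=1$'' is the paper's entire proof. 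Your substitute route---recompute $f(x,y)$ at $B=1$ (the simplification $f(x,y)=\frac{1-x}{(1-Ax)(1-x-y-\gamma xy)}$ is correct), rederive $G(z)$, and then ``verify by symbolic computation'' an equation with coefficients of degree at most $3$, or else guess a recurrence from initial terms and certify it---is only a plan: the existence of that lower-degree differential equation is asserted, not established, so as written the $A=1$, $B=1$ half is unproved. Moreover, your claim that the reduction ``is not visible by direct inspection of the formulas of Proposition~\ref{prop:D-finita}'' is false: when $A=1$ or $B=1$ every $q_i(z)$ and $c(z)$ is divisible by $z$, so dividing \eqref{eq:ED} by $z$ immediately produces exactly the degree-$3$ equation you wanted; you inspected the leading coefficients $q_{i,4}$ when the relevant factor sits in $q_{i,0}$, $c_0$ and $c_1$.
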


\begin{proof}
In \eqref{eq:recgeneral}, if $A=0$ or $B=0$, then $p_4(m)=0$. In the same way, if $A=1$ or $B=1$, then $p_0(m)=0$.
\end{proof}

\begin{proposition} 
If $A=B-1=0$, $A-1=B=0$, $A=B=0$, or $A=B=1$ then the recurrence relation defining $\{\mathfrak{f}_m\}$ is of the form
\begin{equation*}
\hat{p}_0(m) \mathfrak{f}_m + \hat{p}_1(m) \mathfrak{f}_{m-1} + \hat{p}_2(m) \mathfrak{f}_{m-2} = 0,
\end{equation*}
where the $\hat{p}_i(x)$ are polynomials of degree at most~$2$.
\end{proposition}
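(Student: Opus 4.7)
The plan is to show that in each of the four special cases the generating function $G(z)=\sum_{m\ge 0}\mathfrak f_m z^m$ admits a very simple expression in terms of the classical central Delannoy generating function $G_W(z)=1/S(z)$, with $S(z)=\sqrt{1+\gamma^2 z^2-2(2+\gamma)z}$. Recall that, when $\alpha=\beta=1$, the diagonal $\mathfrak W_m$ of the standard weighted Delannoy numbers satisfies the 3-term recurrence \eqref{eq:recdiagsinAB}. Once $G(z)$ has been identified in closed form, the desired 3-term recurrence for $\mathfrak f_m$ follows from \eqref{eq:recdiagsinAB} by elimination.

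The first step is to specialize Theorem~\ref{th:GFfnn} to each admissible pair $(A,B)$ (with $\alpha=\beta=1$) and simplify. Rationalizing the denominators by multiplication with conjugates and using $(1+\gamma z)^2-S^2=4(1+\gamma)z$, one obtains
\[
G(z)=G_W(z)\quad\text{if }A=B=1,\qquad G(z)=1+\gamma z\,G_W(z)\quad\text{if }A=B=0,
\]
and, in both asymmetric cases (which are interchanged by the $A\leftrightarrow B$ symmetry when $\alpha=\beta$),
\[
G(z)=\tfrac12+\tfrac{1+\gamma z}{2}\,G_W(z).
\]
Extracting the coefficient of $z^m$ yields $\mathfrak f_m=\mathfrak W_m$, $\mathfrak f_m=\gamma\mathfrak W_{m-1}$ (for $m\ge 1$), and $\mathfrak f_m=(\mathfrak W_m+\gamma\mathfrak W_{m-1})/2$ (for $m\ge 1$), respectively.

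In the $A=B=1$ case the recurrence for $\mathfrak f_m$ is precisely \eqref{eq:recdiagsinAB}, with polynomial coefficients of degree~$1$. In the $A=B=0$ case, substituting $\mathfrak W_{m-k}=\mathfrak f_{m-k+1}/\gamma$ into the index-shifted \eqref{eq:recdiagsinAB} immediately gives a 3-term recurrence with degree-$1$ coefficients. For the asymmetric cases, the relations $2\mathfrak f_{m-k}=\mathfrak W_{m-k}+\gamma\mathfrak W_{m-k-1}$ for $k=0,1,2$ together with \eqref{eq:recdiagsinAB} applied at indices $m-1$ and $m-2$ form a linear system from which $\mathfrak W_m,\mathfrak W_{m-1},\mathfrak W_{m-2},\mathfrak W_{m-3}$ can be eliminated to yield
\[
m(2m-3)\mathfrak f_m-2\bigl[2(\gamma+2)m^2-4(\gamma+2)m+(\gamma+3)\bigr]\mathfrak f_{m-1}+\gamma^2(m-2)(2m-1)\mathfrak f_{m-2}=0,
\]
whose coefficients are polynomials in $m$ of degree~$2$, as required.

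The main obstacle is the algebraic simplification in the first step: both residues in Theorem~\ref{th:GFfnn} involve $S(z)$ in nested denominators, and one must systematically multiply by appropriate conjugates and use $S^2=1+\gamma^2 z^2-2(2+\gamma)z$ before the cancellations with the numerators produce the clean closed forms above. Once those identities are in hand, the subsequent coefficient extraction and linear elimination are routine; a computer algebra system confirms the degree bound on $\hat p_0,\hat p_1,\hat p_2$.
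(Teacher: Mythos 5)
Your argument is correct, and it takes a genuinely different route from the paper. The paper's own proof is a one-line specialization of the already established $5$-term recurrence \eqref{eq:recgeneral}: for $(A,B)=(0,1)$ or $(1,0)$ one checks $p_0(m)=p_4(m)=0$, for $(0,0)$ that $p_3(m)=p_4(m)=0$, and for $(1,1)$ that $p_0(m)=p_1(m)=0$, so three consecutive terms survive and a shift of index gives the stated form, with the degree bound inherited from Proposition~\ref{prop:P-recursiva}. You instead bypass Propositions~\ref{prop:D-finita} and~\ref{prop:P-recursiva} entirely: specializing Theorem~\ref{th:GFfnn} and simplifying (your conjugation identities are correct; e.g.\ $\bigl({-1}+(2+\gamma)z\bigr)^2-S^2=4(1+\gamma)z^2$ and $({-1}+\gamma z+S)\bigl({-1}+(2+\gamma)z-S\bigr)=2z(1+\gamma z+S)$ do produce the claimed closed forms), you get $G=1/S$, $G=1+\gamma z/S$ and $G=\tfrac12+\tfrac{1+\gamma z}{2S}$, hence $\mathfrak f_m=\mathfrak W_m$, $\mathfrak f_m=\gamma\mathfrak W_{m-1}$ and $\mathfrak f_m=\tfrac12(\mathfrak W_m+\gamma\mathfrak W_{m-1})$, and then the Hautus--Klarner recurrence \eqref{eq:recdiagsinAB} yields the $3$-term relations by direct substitution or by the $5$-equations-in-$4$-unknowns elimination you describe (your explicit recurrence $m(2m-3)\mathfrak f_m-2\bigl[2(\gamma+2)m^2-4(\gamma+2)m+(\gamma+3)\bigr]\mathfrak f_{m-1}+\gamma^2(m-2)(2m-1)\mathfrak f_{m-2}=0$ checks out numerically). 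What your approach buys is explicit, compact recurrences and independence from the heavy symbolic verification behind Proposition~\ref{prop:D-finita}; what the paper's approach buys is brevity and uniformity, since it needs no case-by-case simplification of \eqref{eq:dossum}. Two small remarks: for $A=B=1$ the identity $\mathfrak f_m=\mathfrak W_m$ holds trivially by definition (this is the classical case \eqref{eq:defsinAB} with $\alpha=\beta=1$), so no generating-function work is needed there; and your relations $\mathfrak f_m=\gamma\mathfrak W_{m-1}$, $2\mathfrak f_m=\mathfrak W_m+\gamma\mathfrak W_{m-1}$ only hold for $m\ge1$, so the resulting recurrences are valid from a small explicit index onward --- which is all the proposition requires, and matches the paper's own convention that these recurrences hold after an index shift.
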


\begin{proof} 
In \eqref{eq:recgeneral}, if $A=B-1=0$ or $A-1=B=0$, then $p_4(m)=p_0(m)=0$. In the same way, if $A=B=0$, then $p_4(m)=p_3(m)=0$. Finally, if $A=B=1$, then $p_0(m)=p_1(m)=0$.
\end{proof}

\begin{remark}
Even though the polynomials $\overline{p}_i$ and $\hat{p}_i$ from the two preceding propositions are not the same polynomials from \eqref{eq:recgeneral}, they just result from a shift in the indexes. 
\end{remark}

We finish this section by analyzing the case $A=B$. At first sight, formula \eqref{eq:recgeneral} does not seem to be particularly simple if $A=B$. However, an independent proof can be used of find a 3-term recurrence relation in this case. The ultimate reason for this fact is that, if $A=B$, the second term in the generating function 
$G(z)$ in \eqref{eq:dossum} (see Theorem~\ref{th:GFfnn}) becomes simpler, so a much more direct method can be used. The details are given in the proof of the next proposition. Actually, we will assume that $A=B \ne 1$ because the case $A=B=1$ is already included in the previous proposition and it is well known \cite{HaKl1971} (recall that we are also assuming that $\alpha=\beta=1$).

 \begin{proposition} 
 If $A=B \ne 1$, the sequence $\{\mathfrak{f}_n\}$ is given, for every $m\geq 3$, by
\begin{align*}
  &(A-1) (m-1) \mathfrak{f}_{m} =
  \big( m (-4 + 4 A + A^2 - 2\gamma + 3 A\gamma)
  + 6 - 6 A - A^2 + 3\gamma - 4 A\gamma \big) \mathfrak{f}_{m-1} \\
  &\quad + \big( m (-4 A^2 - 4 A\gamma - 2 A^2\gamma + \gamma^2 - 3 A\gamma^2)
  + 6 A^2 + 6 A\gamma + 3 A^2\gamma - 2\gamma^2 + 5 A\gamma^2 \big) \mathfrak{f}_{m-2} \\
  &\quad + \big( m (A^2\gamma^2 + A\gamma^3) - 2A^2\gamma^2 - 2A\gamma^3 \big) \mathfrak{f}_{m-3},
  \end{align*}
and the first three terms are
\begin{equation*}
  \mathfrak{f}_{0} = 1, \qquad \mathfrak{f}_{1} = 2A+\gamma, \qquad
  \mathfrak{f}_{2} = 2 A^2 + 4 A (1+\gamma) + \gamma (2+\gamma).
\end{equation*}
\end{proposition}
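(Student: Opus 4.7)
The plan is to exploit the simplification of the generating function $G(z)$ from Theorem~\ref{th:GFfnn} when $A=B$ (with $\alpha=\beta=1$) that was hinted at in the preamble, and then translate a first-order linear ODE satisfied by $G(z)$ into the claimed recurrence by coefficient extraction. Set $u := \gamma z - 1$ and $Q(z) := S(z)^2 = 1 + \gamma^2 z^2 - 2(2+\gamma)z$. A direct computation gives the identity $S^2 - u^2 = -4z$, which is the linchpin: in the second term of \eqref{eq:dossum}, the denominator factor
\[
(-1+2Az+\gamma z+S)\bigl(2+A(-1+\gamma z+S)\bigr) = (u+2Az+S)\bigl(2+A(u+S)\bigr)
\]
expands (using $S^2 - u^2 = -4z$ once) to $2(u+S)\bigl(1 + Au + A^2 z\bigr)$, and the factor $(u+S)$ cancels against the numerator. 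Noting further that $1+Au+A^2z = (1-A)+A(A+\gamma)z$ coincides (up to sign) with the denominator of the first term of \eqref{eq:dossum}, one collects everything into the compact form
\[
G(z) = \frac{(1-A)S + z\bigl(2A - A^2 + \gamma\bigr)}{S\bigl[(1-A)+A(A+\gamma)z\bigr]}.
\]

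Write $D(z) := (1-A) + A(A+\gamma)z$ and $\sigma := 2A - A^2 + \gamma$. The displayed formula rearranges to the algebraic relation $S\bigl(D(z)G(z) - (1-A)\bigr) = \sigma z$. Differentiating this identity and using $S' = Q'/(2S)$, I can eliminate $S$ altogether (the factor $S$ in the derivative cancels against $S = \sigma z/(DG - (1-A))$ from the relation itself), arriving at the first-order linear ODE with polynomial coefficients
\[
2Q(z)\,z\,D(z)\,G'(z) + \bigl[2Q(z)\,z\,D'(z) - (2Q(z)-zQ'(z))\,D(z)\bigr]\,G(z) = (zQ'(z)-2Q(z))(1-A).
\]
A short computation shows that $2Q - zQ' = 2(1-(2+\gamma)z)$ has degree $1$, so the right-hand side is a polynomial of degree $1$, while $2QzD$ has degree $4$ with vanishing constant term.

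Extracting the coefficient of $z^{m-1}$ on both sides for $m\geq 3$ (where the right-hand side contributes nothing) yields a linear relation among $\mathfrak{f}_m,\mathfrak{f}_{m-1},\mathfrak{f}_{m-2},\mathfrak{f}_{m-3}$ (four terms, since the constant term of $2QzD$ vanishes so that $\mathfrak{f}_{m+1}$ drops out after a shift, and the leading term brings in $\mathfrak{f}_{m-3}$). The coefficient of $\mathfrak{f}_m$ turns out to be $-2(A-1)(m-1)$, matching the left-hand side of the claim after division by $-2$. Working out the remaining three coefficients mechanically from $2QzD' - (2Q-zQ')D$ and the expansion of $2QzD$ reproduces exactly the polynomials in $m$ displayed in the statement.

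Finally, the initial values $\mathfrak{f}_0=1$, $\mathfrak{f}_1 = 2A+\gamma$, $\mathfrak{f}_2 = 2A^2 + 4A(1+\gamma)+\gamma(2+\gamma)$ are obtained by two applications of the defining recurrence \eqref{eq:recuab1}, using the boundary conditions $f_{m,0}=A^m$, $f_{0,n}=A^n$. The main obstacle is the telescoping simplification of the denominator factor in Step~1 via the identity $S^2 - u^2 = -4z$; once that collapse is in hand, everything else reduces to bookkeeping, since the algebraicity of $G$ over $\mathbb Q(z)$ of degree at most~$2$ is enough to guarantee a first-order linear ODE, and thus the three-step (four-term) recurrence in the statement.
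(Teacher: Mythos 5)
Your argument is correct, and it is a genuine (if modest) variant of the paper's route. The paper also exploits the collapse of the denominator of the second term of \eqref{eq:dossum} when $A=B$ (obtaining $Q(z)=\frac{(2A-A^2+\gamma)z}{(1-A+A^2z+A\gamma z)S}$, the same simplification you reach via $S^2-(\gamma z-1)^2=-4z$, which is a cleaner justification than the paper's appeal to a computer algebra system), but it then keeps the splitting $G=P+Q$: it sums $P$ as an explicit geometric sequence $p_m=\bigl(\tfrac{A(A+\gamma)}{A-1}\bigr)^m$, derives a first-order ODE for $Q$ alone, converts it into a four-term recurrence for $q_m$, and finally recombines $\mathfrak{f}_m=p_m+q_m$ into the stated recurrence (a recombination step the paper only asserts). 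You instead notice that the first term's denominator is the same linear factor $D(z)=(1-A)+A(A+\gamma)z$, so the whole of $G$ satisfies the single algebraic relation $S\bigl(DG-(1-A)\bigr)=\sigma z$, whence one first-order linear ODE for $G$ itself and a direct coefficient extraction; this avoids the split-and-recombine entirely and makes the final bookkeeping shorter. I verified your ODE and the resulting coefficients (including the normalization $-2(A-1)(m-1)$ for $\mathfrak{f}_m$ and the $\mathfrak{f}_{m-3}$ coefficient $2A\gamma^2(A+\gamma)(m-2)$ before dividing by $-2$); they reproduce the statement.

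Two small points you should tighten. First, the elimination of $S$ is terser than it should be: differentiating $SH=\sigma z$ (with $H=DG-(1-A)$) and multiplying by $2S$ gives $Q'H+2QH'=2\sigma S$, and you must use the relation again (equivalently, differentiate the squared relation $QH^2=\sigma^2z^2$ and divide by $H$) to land on $2zQH'+(zQ'-2Q)H=0$; as written, ``the factor $S$ cancels'' glosses over this extra step. Second, the division by $H$ needs $H\not\equiv0$, i.e.\ $\sigma=2A-A^2+\gamma\neq0$; in the degenerate case $\gamma=A(A-2)$ one has $G=(1-A)/D$ and the ODE (hence the recurrence, whose coefficients are polynomial in $A,\gamma$) can be checked directly or obtained by a density argument. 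Neither point is a gap in substance, only in exposition.
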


\begin{proof} 
Let us write the generating function \eqref{eq:dossum} with $\alpha=\beta=1$ as
\[
  \sum_{m \ge 0} \mathfrak{f}_{m} z^{m} = P(z) + Q(z)
\]
with
\begin{align*}
  P(z) &= \frac{-B+1}{1 - B + B^2 z + \gamma B z} = \sum_{m \ge 0} p_{m} z^{m},
  \\
  Q(z) &= \frac{2 z \big(B + A - AB + \gamma\big) \big({-1} + \gamma z + S\big)}
  {S \big({-1} + 2 B z + \gamma z + S \big) \big(2 + A ({-1} + \gamma z + S)\big)}
  = \sum_{m \ge 0} q_{m} z^{m},
\end{align*}
and $S = \sqrt{1 + \gamma^2 z^2 - 2 (2+\gamma)z}$. 

With the previous notation we have $\mathfrak{f}_{m} = p_m + q_m$, so it is enough to find a way to evaluate $p_m$ and $q_m$. For $p_m$, let us note that
\begin{equation*}
  \frac{B-1}{B-1 - (B^2 + \gamma B) z}
  = \frac{1}{1 - \frac{B^2 + \gamma B}{B-1} z}
  = \sum_{m \ge 0} \bigg(\frac{B^2 + \gamma B}{B-1}\bigg)^m z^m,
\end{equation*}
so
\begin{equation}
\label{eq:forpm}
  p_m = \bigg(\frac{B (B + \gamma)}{B-1}\bigg)^m.
\end{equation}
Let us see how to compute $q_m$. The aim is to find a recurrence formula for $q_m$ using something similar to the method in~\cite{HaKl1971}, but now the corresponding expressions are much more complicated.

We must manipulate $Q(z)$. With the help of a computer algebra system, it is not difficult to check that the two factors of the denominator can be written as
\begin{equation*}
  \big({-1} + 2 B z + \gamma z + S \big) \big(2 + A ({-1} + \gamma z + S)\big) 
  = 2(1 - A + A B z + A \gamma z)
  \bigg( {-1} + \gamma z + \frac{2(B-A)z}{1 - A + A B z + A \gamma z} + S \bigg).
\end{equation*}
In this way, we have for $A=B$ that 
\[
  Q(z) = \frac{(2A - A^2 + \gamma)z}{(1 - A + A^2z + A\gamma z)S}.
\]
Differentiating, with the help of a computer algebra system again, we get
\[
  Q'(z) = \frac{(2A - A^2 + \gamma)(2 + \gamma - \gamma^2z)z}{(1 - A + A^2z + A\gamma z) S^3}
  + \frac{-(A-1)(2A-A^2+\gamma)}{(1 - A + A^2z + A\gamma z)^2 S}
  = Q_1(z) + Q_2(z).
\]
and it is easy to notice that
\begin{gather*}
  (1 + \gamma^2 z^2 - 2 z (2 + \gamma) ) Q_1(z) = (2 + \gamma - \gamma^2 z) Q(z),
  \\
  (1 - A + A^2 z + A \gamma z) z Q_2(z) = - (A-1) Q(z).
\end{gather*}
Using these formulas, and taking into account that $Q'(z) = Q_1(z)+Q_2(z)$, we obtain that $Q(z)$ satisfies the differential equation
\begin{align*}
&\bigg( (2 + \gamma - \gamma^2 z) (1 - A + A^2 z + A \gamma z) z
- (A-1) (1 + \gamma^2 z^2 - 2 z (2 + \gamma) ) \bigg) Q(z)
\\
&\qquad\qquad\qquad\qquad\qquad\qquad
= (1 + \gamma^2 z^2 - 2 z (2 + \gamma) ) (1 - A + A^2 z + A \gamma z) z Q'(z).
\end{align*}

If we substitute $Q(z) = \sum_{m \ge 0} q_m z^m$ and $z Q'(z) = \sum_{m \ge 0} m q_m z^m$ in the previous expression, and identify coefficients we obtain that
\begin{equation*}
  q_0 = 0, \quad q_1 = \frac{-2A + A^2 - \gamma}{A-1}, \quad
  q_2 = \frac{(-2A + A^2 - \gamma)(-2+2A+A^2+2A\gamma-\gamma)}{(A-1)^2}.
\end{equation*}
and that, for every $m\geq 3$,
\begin{equation}
\label{eq:recqm}
\begin{aligned}
  q_m &= \frac{1}{1-A + (A-1) m} \Big(
  \big( (m-1)A^2 + A ( (4+3\gamma) m - 6 - 4\gamma) - (2m-3) (2+\gamma) \big) q_{m-1} \\
  &\quad + \big( (m-2) \gamma^2 - A \gamma ((4 + 3\gamma)m - 6 - 5\gamma)
  - (2 m-3) A^2 (2 + \gamma) \big) q_{m-2} 
  + (m-2) A \gamma^2 (A + \gamma) q_{m-3}
  \Big),
\end{aligned}
\end{equation}

Finally, using \eqref{eq:forpm} and \eqref{eq:recqm}, it is not difficult to find the 
recursion for $\mathfrak{f}_m = p_m + q_m$, and the result follows.
\end{proof}

To close this section, let us finally note that, if we would be considering the general recurrence \eqref{eq:recu} (arbitrary $\alpha$ and $\beta$ with $\alpha\beta\neq 0$) instead of the recurrence \eqref{eq:recuab1} (in which $\alpha=\beta=1$), the hypothesis $A=B \ne 1$ of the previous proposition should be replaced by $A/\alpha = B/\beta \ne 1$ and the corresponding statement would be obtained just replacing $A\mapsto A/\alpha$, $B\mapsto B/\beta$, $\gamma\mapsto \frac{\gamma}{\alpha\beta}$ and $\mathfrak{f}_m\mapsto \frac{\mathfrak{f}_m}{\alpha^m\beta^m}$.


\section{Further work}
\label{sec:future}

\begin{figure}[p]
\begin{center}
\includegraphics[width=0.8\textwidth]{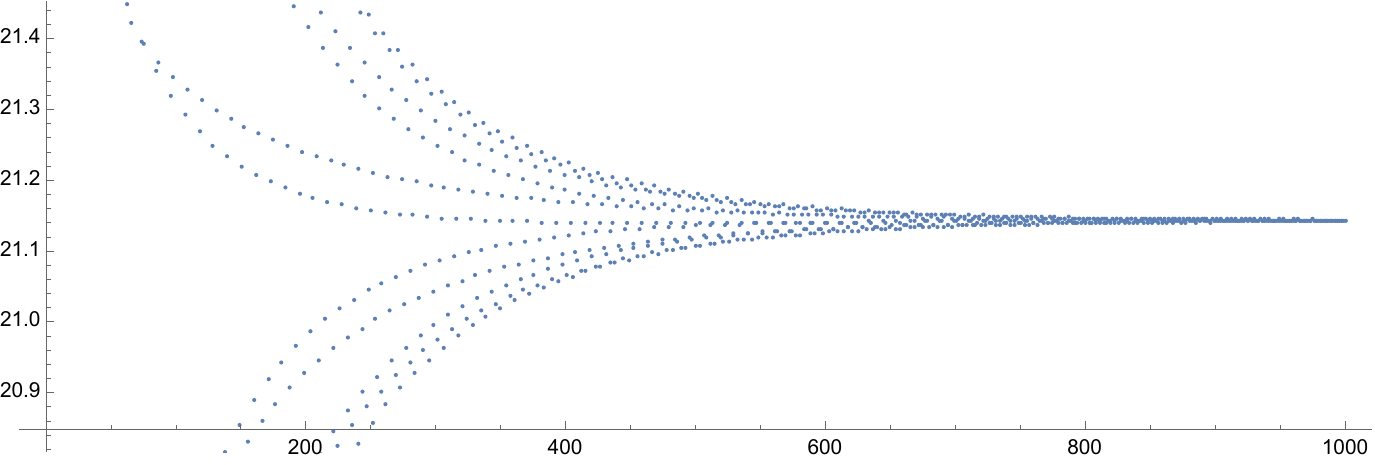}
\caption{$F_n\rightarrow 21.14\dots$ for $\{A,B,\alpha,\beta,\gamma\} = \{2, -4, -4, 3, 21\}$} 
\label{fig:limite}
\end{center}
\end{figure}

\begin{figure}[p]
\begin{center}
\includegraphics[width=0.8\textwidth]{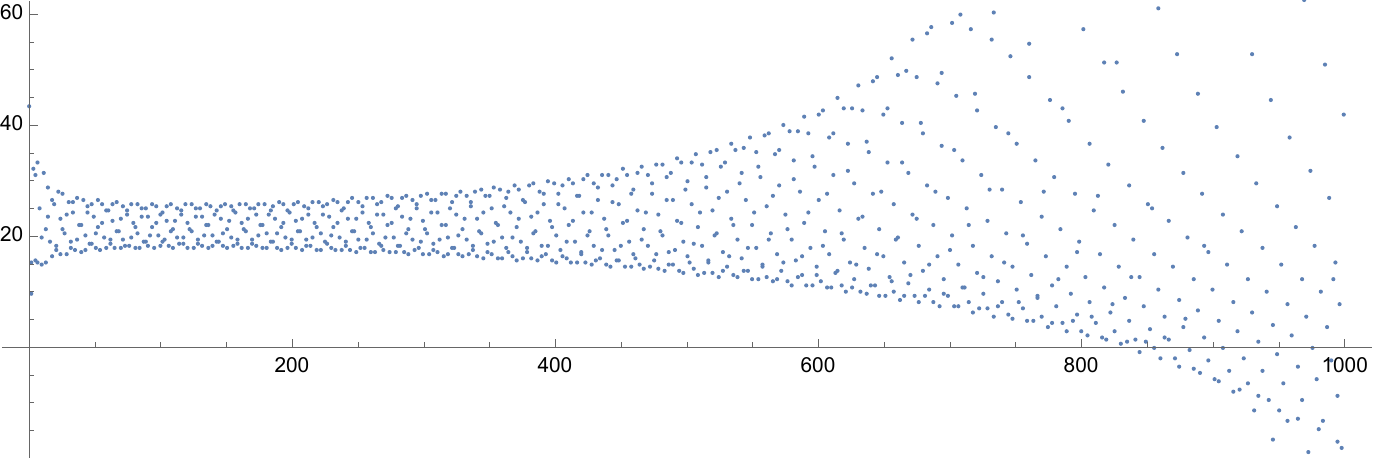}
\caption{$F_n$ is unbounded for $\{A,B,\alpha,\beta,\gamma\} = \{2, -4, -4, 3, 431/20\}$}
\label{fig:noacotado}
\end{center}
\end{figure}

\begin{figure}[p]
\begin{center}
\includegraphics[width=0.8\textwidth]{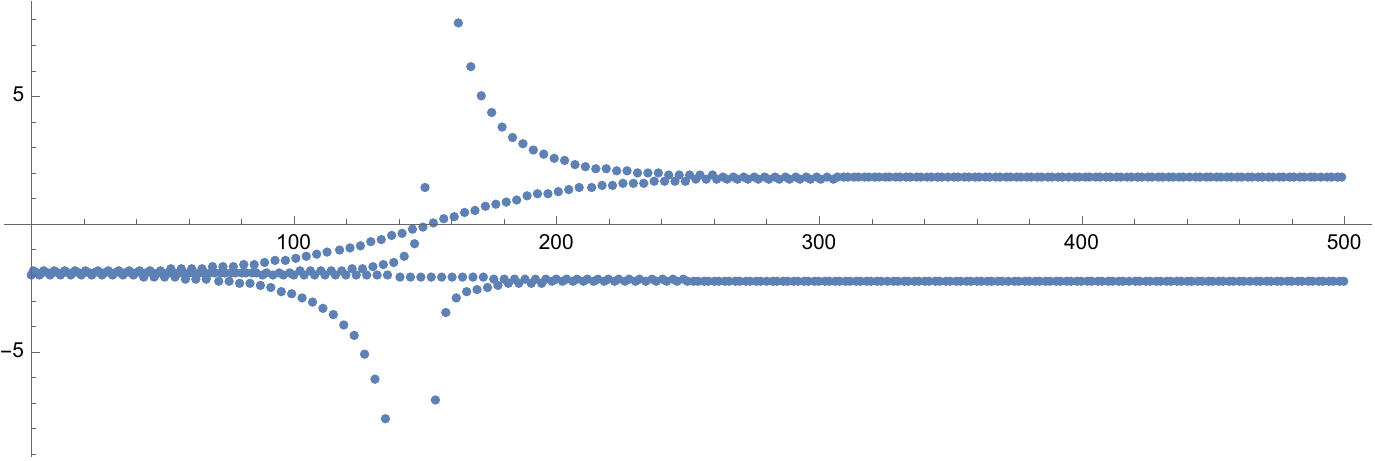}
\caption{$F_n$ tends to the $2$-cycle $\{1.81\dots,-2.20\dots\}$ for $\{A,B,\alpha,\beta,\gamma\} = \{27/20, -27/20, 1, 1, -2\}$}
\label{fig:doslimites}
\end{center}
\end{figure}

\begin{figure}[p]
\begin{center}
\includegraphics[width=0.8\textwidth]{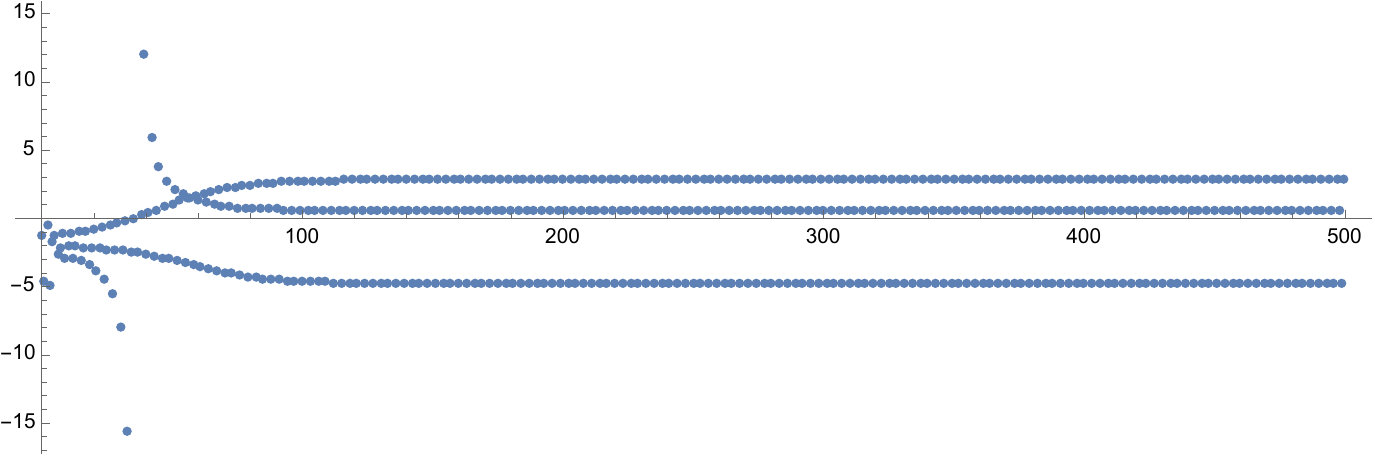}
\caption{$F_n$ tends to the $3$-cycle $\{2.83\dots,0.59\dots,-4.76\dots\}$ 
for $\{A,B,\alpha,\beta,\gamma\} = \left\{8/5, -8/5, 1, 3/2, -2\right\}$}
\label{fig:treslimites}
\end{center}
\end{figure}

In this work, given $A,B,\alpha, \beta, \gamma\geq 0$, we have analyzed the sequence defined by $f_{m,n}=\alpha f_{m-1,n}+\beta f_{m,n-1} + \gamma f_{m-1,n-1}$ with initial conditions $f(m,n)=A^m B^n $ for $mn=0$. We have paid special attention to the case $\alpha\beta\neq 0$ but our approach is equally valid otherwise. In this final section we expose some ideas to extend our work.

First of all, in might be interesting to allow for negative values of the parameters $A, B, \alpha, \beta$ and~$\gamma$. In this work, the sequence $F_n = \frac{f_{n+1,n+1}}{f_{n,n}}$ was always convergent. However, if no restrictions are considered, many different situations are possible. It can be convergent, it can be unbounded, it can be bounded having several limit points, etc. Figures~\ref{fig:limite}, \ref{fig:noacotado}, \ref{fig:doslimites} and~\ref{fig:treslimites} illustrate different possibilities.

It also seems interesting, and promising, to address the same sequence with more general initial conditions $f_{m,0}$ and $f_{0,n}$ that also admit reasonable interpretations in terms of paths, random walks, etc.~\cite{EdGr}. Thus, it might be worth trying to establish a connection between the sequences that we have addressed in this work and those appearing if we consider initial conditions $f_{m,0}= \mathcal{O}(A^m)$ and $f_{0,n}= \mathcal{O}(B^m)$. In fact, they seem to be totally related if the initial conditions are such that 
\begin{equation} 
\label{eq:limi} 
\limsup_m \sqrt[m]{f_{m,0}} = A \quad\text{ and }\quad \limsup_n \sqrt[n]{f_{0,n}} = B, 
\end{equation}
because the corresponding generating functions have the same radius of convergence. For example, if the initial conditions are given by the Fibonacci sequence, i.e., $f_{n,0} = f_{0,n} = \mathsf{Fib}(n)$ the limit $\frac{f_{n+1,n+1}}{f_{n,n}}$ exists and it has the same value as if the initial conditions were given by $f_{n,0}=f_{0,n}= \varphi^n$ with $\varphi=\frac{1}{2} \left(1+\sqrt{5}\right)$ (i.e., $A=B=\varphi$). Furthermore, the sequence is also P-recursive \cite[OEIS A344576]{A344576}.

Finally, we believe that it is worth considering the situation in which some of the above limits \eqref{eq:limi} do not exist, so that the radius of convergence of the generating function is zero. In these situations, the asymptotic behavior of $f_{n,n}$ is different, with the limit $\frac{f_{n+1,n+1}}{f_{n,n}}$ not existing in general. This happens, for instance, if $\alpha=\beta=\gamma=1$ and $f_{n,0}=f_{0,n}=n!$. In this example \cite[OEIS A346374]{A346374} we conjecture that $\frac{f_{n+1,n+1}}{f_{n,n}} \approx n+1$ (where $a_n \approx b_n$ means, as usual, that $a_n-b_n \to 0$ when $n \to \infty$). The same phenomenon happens if $f_{n,0}=f_{0,n}=n^n$, with our conjecture now being $\frac{f_{n+1,n+1}}{f_{n,n}} \approx e\cdot(n+1/2)$ \cite[OEIS A346385]{A346385}. However, in both cases the sequence $f_{n,n}$ is P-recursive.



\end{document}